\definecolor{hot}{RGB}{65,105,225}
\def\sD{\mathscr{D}}
\def\ra{\rightarrow}
\def\bk{\mathbf{k}}
\def\bs{\mathbf{s}}
\def\bf{\mathbf{f}}
\def\bi{\mathbf{i}}
\def\ba{\mathbf{a}}
\def\bbe{\mathbf{e}}
\def\bal{{\boldsymbol{\alpha}}}
\def\bla{{\boldsymbol{\lambda}}}
\def\bC{\mathbb{C}}
\def\bK{\mathbb{K}}
\def\bQ{\mathbb{Q}}
\def\bD{\mathbb{D}}
\def\bZ{\mathbb{Z}}
\def\bN{\mathbb{N}}
\def\cS{\mathcal{S}}
\def\cL{\mathcal{L}}
\def\cP{\mathcal{P}}
\def\cH{\mathcal{H}}
\def\cM{\mathcal{M}}
\def\cN{\mathcal{N}}
\def\cQ{\mathcal{Q}}
\def\al{\alpha}
\def\sA{\mathscr{A}}
\def\sO{\mathscr{O}}
\def\sN{\mathscr{N}}
\def\sX{\mathscr{X}}
\newcommand{\Ann}{\textup{Ann}}
\newcommand{\supp}{\textup{supp}}
\newcommand{\gr}{\textup{gr}}
\newcommand{\Spec}{\textup{Spec}\,}
\newcommand{\D}{\mathbb{D}}
\newcommand{\DR}{\textup{DR}}
\newcommand{\Ch}{\textup{Ch}}
\newcommand{\Chr}{\textup{Ch}^{\textup{rel}}}
\newcommand{\be}{\begin{equation} }
\newcommand{\ee}{\end{equation} }
\def\Exp{\textup{Exp}}
\def\CC{\textup{CC}}
\def\chr{\textup{Ch}^{rel}}
\def\Ext{\textup{Ext}}
\def\Rhom{\textup{RHom}}
\newcommand{\xto}{\xrightarrow}
\def\CCr{\CC^{\textup{rel}}}
\theoremstyle{plain}
\newtheorem{theorem}{Theorem}[subsection]
\newtheorem{lemma}[theorem]{Lemma}
\newtheorem{prop}[theorem]{Proposition}
\newtheorem{coro}[theorem]{Corollary}
\theoremstyle{definition}
\newtheorem{rmk}[theorem]{Remark}
\newtheorem{definition}[theorem]{Definition}
\title{Zero loci of Bernstein-Sato ideals - II}
\author{Nero Budur}
\address{KU Leuven, Celestijnenlaan 200B, B-3001 Leuven, Belgium} 
\email{nero.budur@kuleuven.be}
\author{Robin van der Veer}
\address{KU Leuven, Celestijnenlaan 200B, B-3001 Leuven, Belgium} 
\email{robin.vanderveer@kuleuven.be}
\author{Lei Wu}
\address{Department of Mathematics, University of Utah, 155 S. 14000 E, Salt Lake City, UT 84112, USA}
\email{lwu@math.utah.edu}
\author{Peng Zhou}
\address{Department of Mathematics, University of California at Berkeley, 931 Evans Hall, Berkeley, CA 94720, USA}
\email{pzhou.math@gmail.com}
\keywords{Bernstein-Sato ideal; $b$-function; monodromy; local system; $\sD$-module.}
\subjclass[2010]{14F10; 13N10; 32C38; 32S40; 32S55.}
\numberwithin{equation}{section}
\begin{document}

\begin{abstract} We have recently proved a precise relation between Bernstein-Sato ideals of collections of polynomials and monodromy of generalized nearby cycles. In this article we extend this result to other ideals of Bernstein-Sato type. 
\end{abstract}

\maketitle

\tableofcontents

\section{Introduction}

\subsection{Analytic invariants.}\label{subsUL} 
Let $$F=(f_1,\ldots,f_r):X\ra\bC^r$$ be a morphism of smooth complex affine irreducible algebraic varieties, or the germ at $x\in X$ of a holomorphic map on a complex manifold. Let $$\ba=(a_1,\ldots, a_r)\in\bN^r.$$ One defines an ideal of Bernstein-Sato type
$$
B_F^{\,\ba}=\{ b\in\bC[s_1,\ldots,s_r]\mid b\prod_{i=1}^rf_i^{s_i}=P\cdot\prod_{i=1}^rf_i^{s_i+a_i}\text{ for some }P\in\sD_X[s_1,\ldots,s_r]\},
$$
where $\sD_X$ is the ring of linear differential operators on $X$ and $s_i$ are independent variables. The zero locus of this ideal is denoted $$Z(B_F^{\,\ba})\subset\bC^r.$$ In this article we address the structure and the precise topological information contained by $Z(B_F^{\,\ba})$. Our first main result is:

\begin{theorem}\label{thrmMoreA} Let $F=(f_1,\ldots,f_r):X\ra\bC^r$ be a morphism of smooth complex affine irreducible algebraic varieties, or the germ at $x\in X$ of a holomorphic map on a complex manifold.  Let $\ba\in\bN^r$ such that $\bf^\ba=\prod_{i=1}^rf_i^{a_i}$ is not invertible. Then: 
\begin{enumerate}[(i)]
\item Every irreducible component of $Z(B_F^{\,\ba})$ of codimension 1 is a  hyperplane of type $l_1s_1+\ldots+l_rs_r+b=0$ with $l_i\in\bQ_{\ge 0}$, $b\in\bQ_{>0}$, and for each such hyperplane there exists $i$ with $a_i\ne 0$ such that $l_i>0$. 
\item Every irreducible component of  $Z(B_F^{\,\ba})$ of codimension $>1$  can be translated by an element of $\bZ^r$ inside a component of codimension 1.
\end{enumerate}
\end{theorem}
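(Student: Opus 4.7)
My plan is to reduce the general case to the $\ba = \mathbf{1}$ case treated in our previous paper (Part~I), via a monotonicity lemma combined with a linear substitution. I would first establish the \emph{monotonicity lemma}: if $\ba \leq \ba'$ componentwise then $B_F^{\ba'} \subset B_F^{\ba}$, hence $Z(B_F^{\ba}) \subset Z(B_F^{\ba'})$. This is immediate from rewriting a functional equation $b\bf^{\bs}=P\bf^{\bs+\ba'}$ as $b\bf^{\bs}=(P\bf^{\ba'-\ba})\bf^{\bs+\ba}$, noting $P\bf^{\ba'-\ba}\in\sD_X[\bs]$ since $\ba'-\ba\geq 0$.

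Next, I would handle the case where all $a_i > 0$ by introducing the collection $F^{\ba} := (f_1^{a_1},\ldots,f_r^{a_r})$ and the linear isomorphism $\psi:\bC^r\to\bC^r$, $\psi(\bt)=(a_1 t_1,\ldots,a_r t_r)$. Substituting $\bs = \ba\cdot\bt$ in the functional equation shows that the $\bC$-algebra isomorphism $b(\bs)\mapsto b(\ba\cdot\bt)$ carries $B_F^{\ba}$ onto $B_{F^{\ba}}^{\mathbf{1}}$, yielding $Z(B_F^{\ba}) = \psi\bigl(Z(B_{F^{\ba}}^{\mathbf{1}})\bigr)$. Applying the main theorem of Part~I to $B_{F^{\ba}}^{\mathbf{1}}$ and transporting the conclusion through $\psi$: a hyperplane $\sum l_i t_i + b = 0$ becomes $\sum (l_i/a_i) s_i + b = 0$ (still rational of the required form with some $l_i/a_i > 0$), and an integer translation by $\bk \in \bZ^r$ becomes integer translation by $\ba\cdot\bk \in \bZ^r$. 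Since all $a_i > 0$, the ``some $l_i > 0$'' clause from Part~I automatically upgrades to ``some $l_i > 0$ with $a_i \neq 0$'', so both parts~(i) and (ii) follow.

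For the general case, let $I := \{i : a_i > 0\}$ (nonempty since $\bf^\ba$ is not invertible) and pick $\ba' \geq \ba$ with all positive entries, e.g.\ $\ba' := \ba + \sum_{j \notin I} \bbe_j$. By monotonicity, $Z(B_F^{\ba}) \subset Z(B_F^{\ba'})$, and the latter is a union of rational hyperplanes by the previous paragraph; every codimension-$1$ irreducible component of $Z(B_F^{\ba})$ therefore coincides with one of them, giving the claimed form with $l_i \in \bQ_{\geq 0}$ and $b \in \bQ_{>0}$. To upgrade to ``$l_i > 0$ for some $i \in I$'', I would argue by contradiction: a codimension-$1$ component $H$ with $l_i = 0$ for all $i \in I$ would be a cylinder $H_J \times \bC^I$ (with $J$ the complement of $I$), forcing the specialization of $B_F^{\ba}$ at a generic $\bs_J^0 \in H_J$ to vanish identically on $\bC^I$, in contradiction with a twisted Bernstein-Sato existence statement for the subcollection $F_I$ with multiplier $F_J^{\bs_J^0}$. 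Part~(ii) is handled analogously: a higher-codimension component $W$ of $Z(B_F^{\ba})$ lies in $Z(B_F^{\ba'})$, so Part~I's translation property there supplies $\bk \in \bZ^r$ with $W + \bk$ inside a codimension-$1$ hyperplane of $Z(B_F^{\ba'})$; a further integer translation then aligns this with a codimension-$1$ component of $Z(B_F^{\ba})$ itself coming from part~(i).

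The main obstacle I expect is making the twisted Bernstein-Sato existence argument in the third paragraph sufficiently uniform in $\bs_J^0$ so that the element it produces genuinely specializes from an element of $B_F^{\ba}$, rather than living only in the specialized ideal at a single value of $\bs_J$.
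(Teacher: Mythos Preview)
Your approach is quite different from the paper's. The paper does not reduce to $\ba=\mathbf{1}$: it directly generalizes Maisonobe's argument to arbitrary $\ba$, working with $M=\sD_X[\bs]\bf^{\bs}/\sD_X[\bs]\bf^{\bs+\ba}$. Part (ii) goes through the maximal pure tame extension $\cL$ of $\sD_X[\bs]\bf^\bs$ together with the shift $t:\bs\mapsto\bs+\ba$, showing that $\cL/t\cL$ is $(n+1)$-pure so that $Z(B_{\cL/t\cL})$ is purely of codimension $1$, and then comparing supports of $\cL/t^l\cL$ and of $M$. The strict positivity in (i) is obtained by passing to a log resolution (via a slope-comparison lemma for $\Ch^\sharp$) and an explicit local computation producing an element of the local Bernstein--Sato ideal whose linear factors all have slopes $L$ with $L\cdot\ba>0$.

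Your substitution $s_i\mapsto a_it_i$ for the case where all $a_i>0$ is correct and more elementary than anything in the paper: it identifies $B_F^{\,\ba}$ with $B_{F^\ba}^{\,\mathbf 1}$, and both parts follow from the known $\ba=\mathbf 1$ case. This is a genuine shortcut.

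The extension to general $\ba$ via monotonicity, however, has real gaps. You flag the one in part (i). There is also an unflagged gap in your argument for (ii). From $Z(B_F^{\,\ba})\subset Z(B_F^{\,\ba'})$ you can indeed translate a higher-codimension component $W$ of $Z(B_F^{\,\ba})$ into some hyperplane $H'$ of $Z(B_F^{\,\ba'})$. But your ``further integer translation'' aligning $H'$ with a codimension-$1$ component of $Z(B_F^{\,\ba})$ is unjustified: $Z(B_F^{\,\ba'})$ typically has hyperplane components that no integer translate carries into $Z(B_F^{\,\ba})$. Already for $F=(x,y)$ on $\bC^2$ and $\ba=\bbe_1$, one has $Z(B_{F,1})=\{s_1=-1\}$ while $Z(B_F^{\,\ba'})=Z(B_F)=\{s_1=-1\}\cup\{s_2=-1\}$, and no $\bZ^2$-translate of $\{s_2=-1\}$ lies in $Z(B_{F,1})$. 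Monotonicity alone cannot pull the translation property back from $\ba'$ to $\ba$; some argument intrinsic to the shift by $\ba$ --- as in the paper's analysis of $\cL/t\cL$ --- seems unavoidable.
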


The first claim without the strict positivity of $l_i$ is due to Sabbah \cite{Sab} and Gyoja \cite{G}. The second claim was proven by Maisonobe \cite{M} for the usual Bernstein-Sato ideal 
$$
B_F=B_F^{\mathbf{1}}
$$
with $\mathbf{1}=(1,\ldots,1)$. 

The proof of Theorem \ref{thrmMoreA} is obtained by extending arguments from \cite{M} and \cite{BVWZ}.

The connection with topology is via the exponential map $$\textup{Exp}:\bC^r\ra (\bC^*)^r,\quad (\al_1,\ldots,\al_r)\mapsto (\exp(2\pi i{\al_1}),\ldots, \exp(2\pi i{\al_r})).$$
For $i\in\{1,\ldots, r\}$ we define 
$$
B_{F,i} = B_F^{\,\bbe_i}
$$
with  $\bbe_i=(0,\ldots,0,1,0,\ldots,0)$  the $i$-th standard basis vector. If $f_i$ is invertible, $B_{F,i}=(1)$ and $Z(B_{F,i})=\emptyset$. We recall here that
\be\label{eqIII}
\textup{Exp}(Z(B_F^{\,\ba}))=\bigcup_{i}\textup{Exp} (Z(B_{F,i})) 
\ee
where the union is taken over all $1\le i\le r$ with $a_i\ne 0$ and $f_i$ not invertible, by \cite[Lemma 4.17]{B-ls}.

\subsection{Topological invariants.}
Let $\psi_F\bC_X$ be Sabbah's specialization complex, see \cite[\S 2]{BVWZ} for definition. This  is a generalization of Deligne's nearby cycles complex, the monodromy action being replaced by $r$ simultaneous monodromy actions, one for each function $f_i$. Let $$\cS(F) \subset (\bC^*)^r$$  be the support of this monodromy action on $\psi_F\bC_X$. When $r=1$, this is the set of eigenvalues of the monodromy on the nearby cycles complex. The support $\cS(F)$ has a few other topological interpretations, one being in terms of cohomology support loci of rank one local systems, see \cite[\S 2]{BVWZ}.

If $f_i$ is not invertible, we let 
$$
\cS_i(F) = \textup{Supp}((\psi_F\bC_X)|_{ f^{-1}_i(0)}) \subset (\bC^*)^r
$$
be the support of the monodromy action on the restriction of $\psi_F\bC_X$ to the zero locus of $f_i$.

More generally, if $\bf^\ba$ is not invertible, we let
$$
\cS_\ba(F)  \subset (\bC^*)^r
$$
be the support of the monodromy action on the restriction of $\psi_F\bC_X$ to the zero locus of $\bf^\ba$, so that \be\label{eqI}
\cS_\ba(F)= \bigcup_{i}\cS_i(F)
\ee 
where the union is taken over all $1\le i\le r$ with $a_i\ne 0$ and $f_i$ not invertible. With the same assumptions as in Theorem \ref{thrmMoreA}, we have:

\begin{theorem}\label{thrmMoreB}
$\cS_\ba(F)$ underlies a scheme defined over $\bQ$ and is a finite union of torsion-translated complex affine algebraic subtori of codimension 1 in $(\bC^*)^r$.
\end{theorem}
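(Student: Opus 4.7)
The plan is to combine the structural result Theorem \ref{thrmMoreA} with a topological identification $\cS_\ba(F)=\Exp(Z(B_F^{\,\ba}))$, and then to read off the conclusion directly via the exponential map.

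First I would establish $\cS_i(F)=\Exp(Z(B_{F,i}))$ for each $i$ with $a_i\ne 0$ and $f_i$ non-invertible. For $\ba=\mathbf{1}$ the corresponding statement $\cS(F)=\Exp(Z(B_F))$ is the main theorem of \cite{BVWZ}, obtained by analyzing Sabbah's specialization complex via $V$-filtrations and matching monodromy supports with zeros of $B_F$. I would adapt that argument in the restricted setting of $(\psi_F\bC_X)|_{f_i^{-1}(0)}$, which on the Bernstein--Sato side naturally produces $B_{F,i}$ in place of $B_F$. Together with the decompositions \eqref{eqIII} and \eqref{eqI}, this yields the desired identification $\cS_\ba(F)=\Exp(Z(B_F^{\,\ba}))$.

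Next I would analyze $\Exp$ of a codimension-$1$ component $H:\sum_i l_is_i+b=0$ provided by Theorem \ref{thrmMoreA}(i). Choosing $N\in\bZ_{>0}$ that clears all denominators of the $l_i$ and $b$, one has
$$
\Exp(H)=\bigl\{(t_1,\ldots,t_r)\in(\bC^*)^r \,:\, t_1^{Nl_1}\cdots t_r^{Nl_r}=e^{-2\pi iNb}\bigr\},
$$
which is a torsion translate, by a root of unity, of the codimension-$1$ algebraic subtorus $\{\prod_i t_i^{Nl_i}=1\}$. By Theorem \ref{thrmMoreA}(ii) every higher-codimension component of $Z(B_F^{\,\ba})$ is a $\bZ^r$-translate of a subset of a codim-$1$ component, so by the $\bZ^r$-invariance of $\Exp$ it contributes nothing new to the image. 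Finite generation of $B_F^{\,\ba}$ produces only finitely many irreducible components, hence only finitely many such torsion-translated subtori. Each defining subtorus $\{\prod_i t_i^{Nl_i}=1\}$ is visibly defined over $\bQ$; while an individual torsion translate a priori lives only over a cyclotomic field, the collection of hyperplanes furnished by Theorem \ref{thrmMoreA}(i) has rational defining data and is stable under the cyclotomic Galois action on the translation parameter $b$ modulo $\bZ$, so the full union underlies a scheme defined over $\bQ$.

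The main obstacle is the first step, namely extending the \cite{BVWZ} comparison from the pair $(B_F,\cS(F))$ to each restricted pair $(B_{F,i},\cS_i(F))$. This requires re-running the $V$-filtration and $b$-function analysis of the Sabbah specialization complex while singling out the function $f_i$ and keeping track of the monodromy data supported on $f_i^{-1}(0)$; one must verify that the zeros of $B_{F,i}$ detect precisely this restricted monodromy rather than the global support. Once this identification is in place, Theorem \ref{thrmMoreA} together with the elementary $\Exp$-computation above closes the argument.
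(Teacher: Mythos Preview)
Your route is the ``second proof'' the paper alludes to in the remark following its own argument: deduce the structure of $\cS_\ba(F)$ from Theorems~\ref{thrmMoreA} and~\ref{thrmMoreC}. The paper's \emph{primary} proof is quite different and logically prior to those theorems. It recasts $\cS_i(F)$ as a non-simple extension locus (Proposition~\ref{propNSe}), invokes the absolute-set machinery of \cite{BW16} to obtain the $\bQ$-structure and the torsion-translated-subtorus description, and then adapts the monodromy-zeta-function argument of \cite{BLSW}---reducing via Sabbah's comparison to Denef's theorem on eigenvalues of perverse nearby cycles---to force codimension one. The advantage of the paper's route is that it is purely topological and available before any of the $\sD$-module work; your route has the appeal that once Theorems~\ref{thrmMoreA} and~\ref{thrmMoreC} are in hand (as the paper needs anyway), most of Theorem~\ref{thrmMoreB} drops out.

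Two genuine gaps, however. First, a circularity you do not address: in this paper the proof of the inclusion $\cS_i(F)\subset\Exp(Z(B_{F,i}))$ in \S\ref{subsPT} \emph{uses} Theorem~\ref{thrmMoreB}, specifically to pass from $\bla^{-1}\notin\cS_i(F)$ to $\bla\notin\cS_i(F)$. If you want Theorem~\ref{thrmMoreC} first, you must supply the inversion symmetry of $\cS_i(F)$ independently, for instance by Verdier duality (the cone of $j_!L_\bla[n]\to Rj_*L_\bla[n]$ and its analogue for $\bla^{-1}$ are dual up to shift, hence have the same support on $D_i$). Second, your $\bQ$-definability argument does not follow from Theorem~\ref{thrmMoreA}: that theorem says each hyperplane has rational coefficients, but it does \emph{not} assert that the set of constant terms $b$ modulo $\bZ$ is stable under the cyclotomic Galois action, and no such stability is derivable from the shape of the hyperplanes alone (the single hyperplane $s_1+1/5=0$ meets every constraint of Theorem~\ref{thrmMoreA}(i) yet exponentiates to a subvariety not defined over $\bQ$). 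The $\bQ$-structure genuinely comes from the topological side, via \cite{BW16} as in the paper or via the fact that $\psi_F\bC_X$ is constructible over $\bZ$.
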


The case $\ba=\mathbf{1}$ is due to \cite{BW16, BLSW}. We give two proofs of this result, one of them by adapting the argument from \cite{BLSW}. A second proof follows directly from Theorem \ref{thrmMoreA} together with the next theorem.

\subsection{The connection.}  

\begin{theorem}\label{thrmMoreC}  With the same assumptions as in Theorem \ref{thrmMoreA} we have $$\textup{Exp} (Z(B_F^{\,\ba})) = \cS_\ba(F).$$ 
\end{theorem}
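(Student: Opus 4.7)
The plan is to reduce Theorem \ref{thrmMoreC} to the case $\ba = \bbe_i$ and then adapt the proof of the case $\ba = \mathbf{1}$ carried out in \cite{BVWZ}. First, I combine the algebraic identity (\ref{eqIII}) with the topological identity (\ref{eqI}): both decompose $\Exp(Z(B_F^{\,\ba}))$ and $\cS_\ba(F)$, respectively, as the union over the same finite set of indices $i\in\{1,\ldots,r\}$ satisfying $a_i\ne 0$ and $f_i$ not invertible. Therefore it suffices to establish $\Exp(Z(B_{F,i})) = \cS_i(F)$ for each such $i$; fix $i=r$ without loss of generality.

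For this reduced statement, I would adapt the argument in \cite{BVWZ} where the identification $\Exp(Z(B_F)) = \cS(F)$ is obtained via the multivariable Malgrange--Kashiwara $V$-filtration along $t_1 = \ldots = t_r = 0$ on the graph embedding of $\sM = \sD_X[\bs]\cdot \prod_j f_j^{s_j}$, compared with Sabbah's specialization complex $\psi_F\bC_X$. For $\ba = \bbe_r$, the relevant object is instead the \emph{partial} $V$-filtration along $t_r = 0$ alone. The key claim is that $B_{F,r}$ controls the $\bC[\bs]$-module structure on the associated graded $\gr_V^{t_r}\sM$, whose Riemann--Hilbert counterpart is the restriction $(\psi_F\bC_X)|_{f_r^{-1}(0)}$ with monodromy support $\cS_r(F)$ by definition. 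Theorem \ref{thrmMoreA}(i) enters here to guarantee that every codimension-one component of $Z(B_{F,r})$ has $l_r > 0$ and hence contributes nontrivially to the restriction at $f_r = 0$.

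The main obstacle is the compatibility of the partial $V$-filtration along a single coordinate with the full $r$-variable structure of $B_{F,r}$. In \cite{BVWZ} one exhausts all coordinates simultaneously, so that $\bs$ acts via $(t_1\pa_{t_1}, \ldots, t_r\pa_{t_r})$ on a codimension-$r$ restriction; here only $s_r$ is related to $t_r\pa_{t_r}$ on $\gr_V^{t_r}\sM$, while the other $s_j$ for $j\ne r$ remain as formal parameters acting on a $\sD$-module over a positive-dimensional base. I expect the harder direction to be $\cS_r(F) \subseteq \Exp(Z(B_{F,r}))$: from each torsion-translated subtorus component of $\cS_r(F)$, one must produce a functional equation witnessing an element of $B_{F,r}$ with a root over the whole component. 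This will likely require a Nakayama-type descent together with coherence of the relative specialization, paralleling the key technical step in \cite{BVWZ} but in a more flexible multi-parameter setting, and it is where the techniques of \cite{M} should prove decisive.
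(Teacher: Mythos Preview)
Your reduction to the case $\ba=\bbe_i$ via (\ref{eqIII}) and (\ref{eqI}) is exactly right and matches the paper. After that, however, the approaches diverge and your sketch has a genuine gap. The paper does \emph{not} use a partial $V$-filtration along $t_r=0$; in fact \cite{BVWZ} does not prove $\Exp(Z(B_F))=\cS(F)$ via a Malgrange--Kashiwara filtration either, so the method you are proposing to adapt is not actually there. The obstacle you yourself flag---that a single-variable $V$-filtration only controls the action of $s_r$ on the graded pieces while $s_1,\ldots,s_{r-1}$ remain free parameters---is real and you do not indicate how to overcome it. The paper sidesteps this entirely by working with a different $\sD$-module model: Proposition~\ref{lemRH2} identifies $\sD_X[\bs]\bf^{\bs-\bk+l\bbe_i}\otimes_{\bC[\bs]}\bC_\bal$ with $h_!g_*\cM_\bla$ for $l\gg k\gg 0$, and Proposition~\ref{propNSe} characterizes $\cS_i(F)$ as the locus where $Rj_*L_\bla[n]/h_{!*}Rg_*L_\bla[n]\ne 0$. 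These two descriptions match directly, with no filtration needed.

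You have also inverted the two directions. The inclusion $\cS_i(F)\subset\Exp(Z(B_{F,i}))$ is the \emph{easier} one: by contrapositive, if $\bla\notin\Exp(Z(B_{F,i}))$ then the functional equation specializes at every $\bal+\bk$, forcing $\sD_X\bf^{\bal+\bk}=\sD_X\bf^{\bal+\bk+\bbe_i}$, and the Riemann--Hilbert identification above shows the extension is simple. One does not ``produce'' a functional equation from $\cS_i(F)$; one uses the absence of roots to specialize an existing one. The harder inclusion is $\Exp(Z(B_{F,i}))\subset\cS_i(F)$, and here the strict positivity $l_i>0$ from Theorem~\ref{thrmMoreA} is used in a concrete way you did not anticipate: for a generic $\bal$ on a hyperplane $L\cdot\bs+b=0$ in $Z(B_{F,i})$, one chooses $k$ divisible by $L\cdot\bbe_i>0$ so that the translate $\bal-\bk$ lands on a hyperplane of $Z(B_F^{\,l\bbe_i})$ for suitable $l$, and then the relative-holonomic plus $(n+1)$-Cohen--Macaulay structure of $M=\sD_X[\bs]\bf^{\bs-\bk}/\sD_X[\bs]\bf^{\bs-\bk+l\bbe_i}$ (Theorem~\ref{thmMais}(i), Propositions~\ref{propGeN} and~\ref{propNK}) gives $M\otimes_{\bC[\bs]}\bC_\bal\ne 0$, which via Proposition~\ref{propSEr} places $\Exp(\bal)$ in $\cS_i(F)$.
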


In particular, $\textup{Exp} (Z(B_{F,i})) = \cS_i(F)$ if $f_i$ is not invertible.

This refines the corresponding statement for the ideal $B_F$, in which case the inclusion of the topological side in the algebraic side was shown in \cite{B-ls}, and the reverse inclusion was finally shown in \cite{BVWZ}. The proof is by extension of arguments from \cite{B-ls, WZ, BVWZ}.

\subsection{}  In Section \ref{secExR} we give the first proof of Theorem  \ref{thrmMoreB} and give a $\sD$-module  interpretation of the restricted support loci. This relies on an explicit case of the Riemann-Hilbert correspondence, Proposition \ref{lemRH2}, whose proof we postpone to the last section, Section \ref{secRH2}. In Section \ref{secRHM} we show how Theorems \ref{thrmMoreA} and \ref{thrmMoreC} follow from a technical result on $\sD$-modules, Theorem \ref{thmMais}. Section \ref{secSl} is the core of the article and is devoted to the proof of Theorem \ref{thmMais}.

\subsection{Acknowledgement.} The first author was partly supported by the grants STRT/13/005 and Methusalem METH/15/026 from KU Leuven, G097819N and G0F4216N from the Research Foundation - Flanders. The second author is supported by a PhD Fellowship of the Research Foundation - Flanders. The fourth author is supported by the Simons Postdoctoral Fellowship as part of the Simons Collaboration on HMS.

\section{The restricted support loci}\label{secExR} 

In \cite[\S 2]{BVWZ} we gave various interpretations of the support locus $\cS(F)$. In this section we refine these descriptions to address $\cS_i(F)$ and we prove Theorem \ref{thrmMoreB}.

\subsection{Notation}\label{subDTE} Throughout the article we use the notation and definitions from \cite[\S2 ]{BVWZ}. We let $F=(f_1,\ldots, f_r):X\ra\bC^r$ be as in Theorem \ref{thrmMoreA}. We let $n=\dim X$, $f=\prod_{i=1}^rf_i$, $D=f^{-1}(0)$, $D_i=f_i^{-1}(0)$, $U=X\setminus D$, $i:D\ra X$ is the closed embedding, and $j:U\ra X$ is the open embedding. 

We  let $\bs=(s_1,\ldots,s_r)$, $\bf^\bs=\prod_{i=1}^rf_i^{s_i}$, and in general tuples of numbers will be in bold, e.g. $\mathbf{1}=(1,\ldots, 1)$, $\bal=(\al_1,\ldots,\al_r)$, etc. 

We denote by $D^b_{c}(\bC_X)$ the bounded derived category of constructible sheaves on $X$, and by $Perv(X)$ the category of perverse sheaves on $X$.

If $\bla\in(\bC^*)^r$, we let $L_\bla$ be the rank one $\bC$-local system on $U$ obtained as the pullback via $F:U\ra(\bC^*)^r$ of the rank one local system on $(\bC^*)^r$ with monodromy $\lambda_i$
 around the $i$-th missing coordinate hyperplane. 

We fix for the rest of the section $i$ in $\{1,\ldots, r\}$ such that $f_i$ is not invertible. 

\subsection{Non-simple extension loci} We give a refinement in terms of $\cS_i(F)$ of the following description of $\cS(F)$ as a locus of rank one local systems on $U$ with non-simple higher direct image to $X$ from \cite[\S 1.4]{BLSW}:

\begin{prop}
$$
\cS(F)=\{\bla \in(\bC^*)^r\mid \textup{Cone}(j_{!}L_\bla[n] \ra Rj_*L_\bla[n])\ne 0 \text{ in }D^b_c(\bC_X)\}.
$$
\end{prop}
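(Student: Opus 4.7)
The plan is to reduce both sides of the claimed equality to the existence of a point $x\in D$ for which the local cohomology with coefficients in $L_\bla$ is nonzero. On the right hand side this uses the standard attaching triangle, and on the left hand side it uses the descriptions of $\cS(F)$ recalled in \cite[\S 2]{BVWZ}.

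First I would apply the attaching triangle $j_!j^*\to\textup{id}\to i_*i^*\xrightarrow{+1}$ in $D^b_c(\bC_X)$ to the object $Rj_*L_\bla$. Since $j^*Rj_*L_\bla=L_\bla$, this yields
$$j_!L_\bla\longrightarrow Rj_*L_\bla\longrightarrow i_*i^*Rj_*L_\bla\xrightarrow{+1},$$
so after the global shift the cone of the first arrow is $i_*i^*Rj_*L_\bla[n]$, and it is nonzero in $D^b_c(\bC_X)$ if and only if there exists $x\in D$ with $(i^*Rj_*L_\bla)_x\ne 0$. By the stalk formula for $Rj_*$, the latter stalk equals $R\Gamma(U_x,L_\bla|_{U_x})$, where $U_x=U\cap B_x$ for a sufficiently small open ball $B_x$ around $x$. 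Hence the right hand side of the proposition translates to: there exists $x\in D$ with $H^*(U_x,L_\bla|_{U_x})\ne 0$.

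Next, I would invoke the cohomology-support-locus description of $\cS(F)$ from \cite[\S 2]{BVWZ}: a character $\bla\in(\bC^*)^r$ belongs to $\cS(F)$ if and only if there exists $x\in D$ such that $H^*(U_x,L_\bla|_{U_x})\ne 0$. Combining this with the previous paragraph gives the desired equality.

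The main obstacle is the equivalence of the two descriptions of $\cS(F)$: as the support of the monodromy action on Sabbah's specialization complex $\psi_F\bC_X$, and as the union over $x\in D$ of cohomology jump loci of rank one local systems on the local punctured neighborhoods $U_x$. That equivalence is precisely what is recorded in \cite[\S 2]{BVWZ}, and ultimately relies on computing the stalk $(\psi_F\bC_X)_x$ as an Alexander-type complex on the universal abelian cover of $U_x$, whose support as a module over $\bC[(\bC^*)^r]$ is exactly the cohomology jump locus at $x$. Once this equivalence is taken as given, the proof is a formal application of the attaching triangle.
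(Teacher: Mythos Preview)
Your proposal is correct. The paper does not actually give its own proof of this proposition; it merely records the statement and attributes it to \cite[\S 1.4]{BLSW}. Your argument---identifying the cone as $i_*i^{-1}Rj_*L_\bla[n]$ via the attaching triangle and then invoking the cohomology-support-locus description of $\cS(F)$ from \cite[\S 2]{BVWZ}---is exactly the natural one, and the same attaching triangle (\ref{eqFES}) is used a few lines later in the paper in the proof of Proposition \ref{propNSe}.
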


From the description of Sabbah's complex $\psi_F\bC_X$ from \cite{BLSW}, one has  the following equivalent definition for $\cS_i(F)$:
\begin{lemma}
$$
\cS_i(F)=\{\bla \in(\bC^*)^r\mid \textup{Cone}(j_{!}L_\bla[n] \ra Rj_*L_\bla[n])_{| D_i}\ne 0 \text{ in }D^b_c(\bC_{D_i})\}.
$$
\end{lemma}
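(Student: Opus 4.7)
The plan is to derive the characterization of $\cS_i(F)$ from exactly the same stalkwise description of $\psi_F\bC_X$ that yields the preceding proposition, but localized on the closed subset $D_i \subset D$ instead of on all of $D$.

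First I would recall from \cite[\S2]{BVWZ} (following \cite{BLSW}) that $\psi_F\bC_X$ is a complex on $D$ equipped with a $\bC[\bZ^r]$-action, whose BLSW presentation gives the following pointwise compatibility: for every $x \in D$ and every $\bla \in (\bC^*)^r$, the $\bla$-component (equivalently, the localization at the maximal ideal $\fm_\bla \subset \bC[\bZ^r]$) of the stalk $(\psi_F\bC_X)_x$ is nonzero if and only if the stalk at $x$ of $\textup{Cone}(j_!L_\bla[n] \to Rj_*L_\bla[n])$ is nonzero. This is precisely the input that, upon letting $x$ range over $D$, yields the previously displayed description of $\cS(F)$.

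Next I would specialize this principle to $D_i$. By definition, $\bla \in \cS_i(F)$ means that the $\bla$-component of the $\bC[\bZ^r]$-action on $(\psi_F\bC_X)|_{D_i}$ is nonzero, which unfolds to the existence of some $x \in D_i$ with $\bla$-component of $(\psi_F\bC_X)_x$ nonzero. The compatibility above translates this into the existence of some $x \in D_i$ with $\textup{Cone}(j_!L_\bla[n] \to Rj_*L_\bla[n])_x \ne 0$, which is exactly the nonvanishing of the restriction of the cone to $D_i$ in $D^b_c(\bC_{D_i})$. Since the cone already vanishes on $U$ (where $j_!L_\bla \to Rj_*L_\bla$ is an isomorphism), nothing outside $D$ enters the picture, so restricting further from $D$ to $D_i$ loses only the information over $D \setminus D_i$ and is consistent with the definition of $\cS_i(F)$.

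The one point requiring a bit of care is matching conventions between the formulation of the lemma and the BLSW description as recalled in \cite[\S 2]{BVWZ} — shifts, the precise meaning of ``$\bla$-component'' of the monodromy action, and possibly $\bla$ versus $\bla^{-1}$. This, however, is bookkeeping rather than a genuine obstacle: once the stalkwise identification and its compatibility with the $\bC[\bZ^r]$-action are in place, the lemma is essentially a restatement of the preceding proposition with $D$ replaced by $D_i$.
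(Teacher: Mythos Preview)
Your proposal is correct and follows essentially the same approach as the paper: the paper treats the lemma as an immediate consequence of the description of Sabbah's complex $\psi_F\bC_X$ from \cite{BLSW}, and you have simply spelled out the stalkwise argument underlying that reference. The extra care you take with conventions (shifts, $\bla$ versus $\bla^{-1}$) is appropriate but, as you note, only bookkeeping.
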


A more useful description for us will be the following. Let $g$ and $h$ be the open embeddings 
\be\label{eqGH}
U\xrightarrow{g} X\setminus {D_i}\xrightarrow{h} X
\ee
so that $j=h\circ g$. The result we are after in this subsection is the following:

\begin{prop}\label{propNSe}
We have
$$
\cS_i(F)=\left\{\bla\in(\bC^*)^r \mid \frac{Rj_*L_\bla[n]}{h_{!*}Rg_*L_\bla[n]}\ne 0\text{ in }Perv(X)\right\},
$$
or equivalently,
$$
\cS_i(F)=\left\{\bla\in(\bC^*)^r \mid \textup{Cone}({h_{!}Rg_*L_\bla[n]}\ra{Rj_*L_\bla[n]})\ne 0\text{ in }D^b_c(\bC_X)\right\}.
$$
\end{prop}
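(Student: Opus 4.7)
The plan is to derive both characterizations from the preceding lemma, which identifies $\cS_i(F)$ with $\{\bla : C|_{D_i} \ne 0\}$, where $C = \textup{Cone}(j_! L_\bla[n] \to Rj_* L_\bla[n])$, by exploiting the factorization $j = h \circ g$.

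First I would establish the cone characterization. The canonical map $j_! L_\bla[n] \to Rj_* L_\bla[n]$ factors as
\[
h_! g_! L_\bla[n] \xrightarrow{\alpha} h_! Rg_* L_\bla[n] \xrightarrow{\beta} Rh_* Rg_* L_\bla[n] = Rj_* L_\bla[n].
\]
Since $\alpha = h_!$ applied to the canonical map $g_! L_\bla[n] \to Rg_* L_\bla[n]$, we have $\textup{Cone}(\alpha) = h_!\,\textup{Cone}(g_! L_\bla[n] \to Rg_* L_\bla[n])$, whose restriction to $D_i$ vanishes because $h$ excludes $D_i$. The octahedral axiom then yields $C|_{D_i} \simeq \textup{Cone}(\beta)|_{D_i}$. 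Moreover, $\textup{Cone}(\beta)$ is supported on $D_i$ since $h_! Rg_* L_\bla[n]$ and $Rj_* L_\bla[n]$ coincide on $X \setminus D_i$. Combined with the lemma, this yields the second characterization.

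Next, for the quotient characterization, I would factor $\beta$ in $Perv(X)$ through its image: $h_! Rg_* L_\bla[n] \twoheadrightarrow h_{!*} Rg_* L_\bla[n] \hookrightarrow Rj_* L_\bla[n]$. Then $\textup{Cone}(\beta)$ has perverse cohomologies $K = \ker(h_! Rg_* L_\bla[n] \to h_{!*} Rg_* L_\bla[n])$ in degree $-1$ and $Q = Rj_* L_\bla[n]/h_{!*} Rg_* L_\bla[n]$ in degree $0$, both supported on $D_i$. Hence $\textup{Cone}(\beta) \ne 0$ iff $K \ne 0$ or $Q \ne 0$. The inclusion $\{Q \ne 0\} \subseteq \cS_i(F)$ is immediate, so to conclude I only need $\{K \ne 0\} \subseteq \{Q \ne 0\}$.

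The hard part is this last containment. My plan is to use Verdier duality on $\beta$: since $\mathbb{D}(Rg_* L_\bla[n]) = g_! L_{\bla^{-1}}[n]$, duality identifies the dual of $K$ for $L_\bla$ with $\textup{coker}\bigl(h_{!*} g_! L_{\bla^{-1}}[n] \hookrightarrow Rh_* g_! L_{\bla^{-1}}[n]\bigr)$. Combined with the $\bla \mapsto \bla^{-1}$ invariance of $\cS_i(F)$, which follows from the Verdier self-duality of Sabbah's specialization complex $\psi_F \bC_X$ up to the standard shift, and with a local comparison between the $g_!$- and $Rg_*$-versions of the relevant perverse sheaves along crossings of the divisors $D_j$, one deduces $\{K \ne 0\} = \{Q \ne 0\} = \cS_i(F)$. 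This comparison, which hinges on a careful analysis near points where several $f_j$ vanish simultaneously, is the main technical obstacle.
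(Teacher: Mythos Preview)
Your derivation of the cone characterization via the factorization $j_!\to h_!Rg_*\to Rj_*$ and the octahedral axiom is fine and matches what the paper does (the paper phrases it through the adjunction triangle for $V=X\setminus D_i$, which identifies $\textup{Cone}(\beta)$ directly with $\iota_*\iota^{-1}Rj_*L_\bla[n]$).

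The gap is in your ``hard part''. Your Verdier-duality plan does not close: dualizing $\beta\colon h_!Rg_*L_\bla[n]\to Rh_*Rg_*L_\bla[n]$ produces a map $h_!g_!L_{\bla^{-1}}[n]\to Rh_*g_!L_{\bla^{-1}}[n]$, so you land on the $g_!$-picture rather than the $Rg_*$-picture you started from. You acknowledge this and propose a ``local comparison between the $g_!$- and $Rg_*$-versions \ldots\ near points where several $f_j$ vanish'', but this is exactly where the difficulty lies, and no mechanism is given. In particular, the $\bla\mapsto\bla^{-1}$ symmetry of $\cS_i(F)$ you invoke is not immediate from the cone description for the same reason (duality again swaps $Rg_*$ and $g_!$), so you cannot use it as an input without further work.

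The paper avoids this entirely by a short length argument. Writing the perverse long exact sequence of the adjunction triangle for $h$, one has
\[
0\to K\to h_!Rg_*L_\bla[n]\to Rj_*L_\bla[n]\to Q\to 0
\]
with $K={}^pH^{-1}(\iota_*\iota^{-1}Rj_*L_\bla[n])$ and $Q={}^pH^{0}(\iota_*\iota^{-1}Rj_*L_\bla[n])$. The key observation is that $K$ and $Q$ are also the kernel and cokernel of
\[
{}^p\psi_{f_i}Rg_*L_\bla[n]\xrightarrow{\,T-\mathrm{id}\,}{}^p\psi_{f_i}Rg_*L_\bla[n],
\]
an endomorphism of a single perverse sheaf. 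Hence $K$ and $Q$ have the same length and vanish simultaneously; no duality or $\bla\leftrightarrow\bla^{-1}$ symmetry is needed. This one line replaces your entire ``hard part''.
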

\begin{proof} Set $V=X\setminus {D_i}$. One has in $D^b_c(\bC_X)$ two distinguished triangles, namely the adjunction triangles corresponding to $U$ and $V$,
\be\label{eqFES}
j_{!}L_\bla[n] \ra Rj_*L_\bla[n]\ra i_*i^{-1}Rj_*L_\bla[n]\mathop{\longrightarrow}^{[1]} 
\ee
and
\be\label{eqSES}
h_{!} Rg_* L_\bla[n] \ra Rh_{*} Rg_* L_\bla[n] \ra \iota_{*}\iota^{-1}Rh_*Rg_* L_\bla[n]\mathop{\longrightarrow}^{[1]},
\ee
where $i:D\ra X$ and $\iota:D_i\ra X$ are the closed embeddings. In both cases, the first two terms are perverse, and the images as perverse sheaves of the left-most maps in the two complexes are the intermediate extensions. 

 By taking the long exact sequence of perverse cohomology of (\ref{eqSES}), we have an exact sequence in $Perv(X)$
$$
0\ra K\ra h_{!} Rg_* L_\bla[n] \ra Rj_* L_\bla[n] \ra C\ra 0
$$
with
$$
K={}^pH^{-1}(\iota_*\iota^{-1}Rj_*L_\bla[n])\quad\text{ and }\quad C= {}^pH^{0}(\iota_*\iota^{-1}Rj_*L_\bla[n]),
$$ 
where ${}^pH^*$ are the perverse cohomology sheaves. On the other hand, $K$ and $C$ have an alternative description, namely they fit into an exact sequence of perverse sheaves
$$
0\ra K \ra {}^p\psi_{f_i}Rg_* L_\bla[n] \xrightarrow{T-id} {}^p\psi_{f_i}Rg_* L_\bla[n]\ra C\ra 0,
$$
where ${}^p\psi_{f_i}=\psi_{f_i}[-1]$ is the perverse nearby cycles functor of $f_i$ and $T$ is the monodromy action around $D_i$, see \cite[5.8]{DM}. In particular, $K$ and $C$ have  same length as perverse sheaves, and so they vanish or not simultaneously. Thus $$K=0 \iff C=0\iff \iota^{-1}Rj_* L_\bla[n]=0$$ in the derived category. 
This is further equivalent to
$$
\textup{Cone}(j_{!}L_\bla[n] \ra Rj_*L_\bla[n])_{\mid D_i}=0.
$$
by (\ref{eqFES}), since
$$
\iota^{-1}Rj_* L_\bla[n] = \iota^{-1}(i_*i^{-1}Rj_*L_\bla[n]).
$$
Hence the proposition follows from the previous lemma.
\end{proof}

\subsection{Proof of Theorem \ref{thrmMoreB}.}   
By (\ref{eqI}), it is enough to consider $\cS_i(F)$.

The description of $\cS_i(F)$ from Proposition \ref{propNSe} allows one to apply the general results of \cite{BW16} to conclude that $\cS_i(F)$ is the set of complex points of a $\bQ$-scheme, and it is a finite union of torsion-translated complex affine algebraic subtori of $(\bC^*)^r$. 

It remains to prove that each irreducible component of $\cS_i(F)$ has codimension one. It was proved in \cite[Theorem 1.3]{BLSW} that $\cS(F)$ satisfies this property by showing that it is the union over points $x$ in $D$ of the hyperplanes appearing as zero or polar loci of the monodromy zeta function $Z_{F,x}^{mon}(t_1,\ldots,t_r)$ of $F$ at $x$. By definition $\cS(F)$ contains this locus. For the other inclusion, it is enough to take a torsion point in the support of $(\psi_F\bC_X)_x$ and show that it lies on a hyperplane in the zero or polar locus of $Z_{F,y}^{mon}(t_1,\ldots,t_r)$ for some $y$ in $D$. The proof is by reduction to the case $r=1$. The reduction step uses the comparison due to Sabbah  $$Z_{F,x}^{mon}(t^{m_1},\ldots,t^{m_r})=Z_{g,x}(t)$$ for integers $m_i>0$ carefully chosen in terms of the torsion point, and $g=f_1^{m_1}\ldots f_r^{m_r}$. The $r=1$ case is a result of Denef stating that an eigenvalue of the monodromy on $(\psi_g\bC_X)_x$ always appears as zero or pole of $Z_{g,y}^{mon}(t)$ for some $y\in D$ close to $x$, this result depending only on the perversity of  $\psi_g\bC_X[n-1]$.

The proof from {\it loc. cit.} thus extends   word-by-word to prove our claim if one replaces $\psi_F\bC_X$ by $(\psi_F\bC_X)|_{D_i}$,  $\psi_g\bC_X$ by $(\psi_g\bC_X)|_{D_i}$, since in this case Denef's theorem for the perverse sheaf $(\psi_g\bC_X)|_{D_i}[n-1]$ gives that one can find such $y$ in $D_i$. $\hfill\Box$

\begin{rmk} A completely different proof of Theorem \ref{thrmMoreB} will follow directly from Theorems \ref{thrmMoreA} and \ref{thrmMoreC}.
\end{rmk}

\subsection{$\sD$-module theoretic interpretation}

We have the following complement to \cite[Theorem 2.5.1]{BVWZ} whose proof we will postpone to the last section: 

\begin{prop}\label{lemRH2} Let $F=(f_1,\ldots, f_r):X\ra\bC^r$ be a morphism from a smooth complex algebraic variety of dimension $n$.  Let $\bal\in\bC^r$ and $\bla=\exp(-2\pi i\bal)$. Let $L_\bla$ be the rank one local system on $U$ defined as in \ref{subDTE}, and let $\cM_\bla=L_\bla\otimes_\bC\sO_U$ the corresponding flat line bundle, so that 
$$
\DR_U(\cM_\bla)=L_\bla[n]
$$
as perverse sheaves on $U$. Let $i$ be such that $f_i$ is not invertible. There exists $k_\bal\in\bZ$ depending on $ \bal$ such that for all integers $k, l$ with
 $k_\bal\le k\ll l$ and $\bk=(k,\ldots,k)\in\bZ^r$, there is a natural quasi-isomorphism in $D^b_{rh}(\sD_X)$
$$
\sD_X[\bs]\bf^{\bs-\bk+l\cdot\bbe_i}\otimes_{\bC[\bs]}\bC_\bal=h_!g_*\cM_\bla,
$$
where $\bC_\bal$ is the residue field of $\bal$ in $\bC^r$,  and $g$ and $h$ are as in (\ref{eqGH}).
\end{prop}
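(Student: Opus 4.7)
The plan is to parallel the proof of \cite[Theorem 2.5.1]{BVWZ}, which gives the analogous identification with $Rj_*\cM_\bla$ on the right-hand side (the case $l=0$ in the present notation). The new input is that the factor $f_i^l$ in the generator, for $l$ sufficiently large, converts the $*$-extension along $D_i$ provided by the left-hand side into the required $!$-extension.

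Write $M_l:=\sD_X[\bs]\bf^{\bs-\bk+l\bbe_i}$. Multiplication by $f_i^l$ gives a $\sD_X[\bs]$-linear inclusion $M_l\hookrightarrow M_0$, which after specialization at $\bal$ induces a map
\[
\varphi_l:M_l\otimes_{\bC[\bs]}\bC_\bal\longrightarrow M_0\otimes_{\bC[\bs]}\bC_\bal \xrightarrow{\sim} Rj_*\cM_\bla
\]
in $D^b_{rh}(\sD_X)$, where the last quasi-isomorphism is \cite[Theorem 2.5.1]{BVWZ} (which fixes the lower bound $k\ge k_\bal$ in terms of Bernstein-Sato data for $\bal$). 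Next I would show that $\varphi_l$ factors through $h_!g_*\cM_\bla$ for $l\gg k$: the canonical morphism $h_!g_*\cM_\bla\to Rj_*\cM_\bla$ coming from the adjunction triangle (\ref{eqSES}) has cone supported on $D_i$, and any regular holonomic complex supported on $D_i$ is annihilated in $D^b_{rh}(\sD_X)$ by a sufficiently large power of $f_i$ (bounded in terms of the $V$-filtration/monodromy data, hence of $\bal$). Since $\varphi_l$ is by construction $\varphi_0$ composed with multiplication by $f_i^l$, choosing $l$ above this bound forces $\varphi_l$ to factor uniquely as
\[
\bar\varphi_l:M_l\otimes_{\bC[\bs]}\bC_\bal \longrightarrow h_!g_*\cM_\bla.
\]

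Finally one must verify that $\bar\varphi_l$ is a quasi-isomorphism. By regular holonomicity and Riemann-Hilbert, it suffices to compare underlying perverse sheaves. Both sides restrict to $\cM_\bla[n]$ on $U$; generically along each $D_j$ with $j\ne i$ the left-hand side realizes the $*$-extension of $\cM_\bla$, by the same Bernstein-Sato inversion argument driving the proof of \cite[Theorem 2.5.1]{BVWZ} (this is what pins down $k_\bal$); generically along $D_i$ it realizes the $!$-extension, because the vanishing of $f_i^l$ for $l\gg 0$ pushes the generator above the relevant jump of the $V$-filtration along $f_i$. Matching at intersections $D_{j_1}\cap\cdots\cap D_{j_p}$ proceeds inductively, using the Bernstein-Sato ideals $B_{F,j}$ in each direction. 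The main obstacle is promoting this identification of perverse sheaves to an actual quasi-isomorphism of D-module complexes: this is equivalent to the vanishing in positive degrees of the Koszul complex $K_\bullet(\bs-\bal;M_l)$ computing the derived specialization, a $\bC[\bs]$-flatness-type statement at $\bal$ that requires delicate control via Bernstein-Sato functional equations, achieved precisely by the conditions $k\ge k_\bal$ and $l\gg k$.
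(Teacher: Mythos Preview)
Your proposal has two genuine gaps, and the paper's route is quite different.

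\textbf{The factoring step is not correct.} You claim that a regular holonomic complex supported on $D_i$ is annihilated in $D^b_{rh}(\sD_X)$ by a large power of $f_i$. This is false already for $\sD$-modules: if $D_i=\{x=0\}\subset\bC$ and $N=\sD_\bC/\sD_\bC x$, then $x\cdot\partial^k=-k\partial^{k-1}$ in $N$, so multiplication by $x$ is a surjective (non-nilpotent) shift. More structurally, the inclusion $M_l\hookrightarrow M_0$ is a $\sD_X[\bs]$-linear map that sends the \emph{generator} to $f_i^l$ times the generator; it is not the $\sO_X$-linear endomorphism ``$\cdot f_i^l$'' of $M_0$, so you cannot conclude that the composite to the cone vanishes from any nilpotence of $f_i$. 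Moreover, the morphism $h_!g_*\cM_\bla\to j_*\cM_\bla$ is in general \emph{not} injective (its kernel is ${}^pH^{-1}(\iota_*\iota^{-1}Rj_*L_\bla[n])$, cf.\ the proof of Proposition~\ref{propNSe}), so even if your image landed in $h_{!*}g_*\cM_\bla$, this would not produce a lift to $h_!g_*\cM_\bla$.

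\textbf{The derived specialization is the heart of the matter.} You correctly flag that one must show $M_l\otimes^L_{\bC[\bs]}\bC_\bal$ is concentrated in degree $0$, but offer no mechanism. The paper does not try to prove this directly. Instead it works \emph{before} specialization, over the localization $\bC[\bs]_m$ at the maximal ideal of $\mathbf 0$ (after the substitution $\bs\mapsto\bs+\bal$). It computes $h_!g_*(\cN|_U\otimes\bC[\bs]_m)$ via duality: a lemma shows $\bD(\cM_\bla[\bs]\bf^\bs)=\cM_{-\bla}[\bs]\bf^{-\bs}$, so $\bD g_*(\cN|_U)$ is again of the same shape, and $h_*\bD g_*$ is identified with a concrete $\cN_{k',l',\bal'}$. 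Choosing $k,l$ so that the relevant integer point avoids the (proper) $\bC[\bs]$-support of the higher $\Ext^j_{\sD_X[\bs]}(\cN,\sD_X[\bs])$ makes everything $n$-Cohen--Macaulay, hence $n$-pure. Purity forces $h_!g_*(\cN|_U\otimes\bC[\bs]_m)\hookrightarrow j_*(\cN|_U\otimes\bC(\bs))$, whence $h_!g_*=h_{!*}g_*$ over $\bC[\bs]_m$, and minimality identifies it with $\cN_{k,l,\bal}\otimes\bC[\bs]_m$. Only at the end does one apply $-\otimes^L_{\bC[\bs]_m}\bC_{\mathbf 0}$, which commutes with $\bD$; the Koszul vanishing is thus absorbed into the Cohen--Macaulay framework rather than attacked head-on.

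In short: your specialize-first strategy loses exactly the control needed to handle both the lift to $h_!$ and the higher Tor; the paper keeps the $\bC[\bs]$-variable alive and uses duality and purity to get both at once.
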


This gives a $\sD$-module theoretic interpretation of $\cS_i(F)$. 

\begin{prop}\label{propSEr}
With $F$ and $k_\bal$ as in Proposition \ref{lemRH2},
$$
\cS_i(F) = \Exp \left\{ \bal\in\bC^r\mid \frac{\sD_X[\bs]\bf^{\bs-\bk}}{\sD_X[\bs]\bf^{\bs-\bk+l\bbe_i}}\otimes_{\bC[\bs]}\bC_\bal\ne 0 \text{ for some }k\ge k_\bal\text{ and for all }
l\gg k \right\}
$$
\end{prop}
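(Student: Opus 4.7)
My plan is to reduce Proposition \ref{propSEr} to Proposition \ref{propNSe} by means of Proposition \ref{lemRH2} and its companion statement for $Rj_*$, which I read as \cite[Theorem 2.5.1]{BVWZ}:
\begin{equation*}
\sD_X[\bs]\bf^{\bs-\bk}\otimes_{\bC[\bs]}\bC_\bal \;\simeq\; Rj_*\cM_\bla \quad \text{in } D^b_{rh}(\sD_X)
\end{equation*}
for $k\ge k_\bal$ (after possibly enlarging $k_\bal$), whose de Rham image is $Rj_*L_\bla[n]$. Both quasi-isomorphisms concern the same family of modules $\sD_X[\bs]\bf^{\bs-\bk+l\bbe_i}$ parametrized by $l$, so the natural inclusion
\begin{equation*}
\sD_X[\bs]\bf^{\bs-\bk+l\bbe_i} \;\hookrightarrow\; \sD_X[\bs]\bf^{\bs-\bk},
\end{equation*}
given by multiplication by $f_i^l$, should correspond under the two identifications to the natural adjunction map $h_!g_*\cM_\bla \to Rj_*\cM_\bla$.

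Granting this compatibility, the short exact sequence $0 \to \sD_X[\bs]\bf^{\bs-\bk+l\bbe_i} \to \sD_X[\bs]\bf^{\bs-\bk} \to Q \to 0$ of $\sD_X[\bs]$-modules yields, after derived tensoring with $\bC_\bal$ over $\bC[\bs]$, a distinguished triangle
\begin{equation*}
h_!g_*\cM_\bla \to Rj_*\cM_\bla \to Q\otimes^L_{\bC[\bs]}\bC_\bal \xrightarrow{+1}
\end{equation*}
in $D^b_{rh}(\sD_X)$. Applying $\DR_X$ recovers the triangle of Proposition \ref{propNSe}, so $\bla\in\cS_i(F)$ if and only if $Q\otimes^L_{\bC[\bs]}\bC_\bal\ne 0$. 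To then replace the derived tensor by the ordinary one in the statement, note that the direction $Q\otimes\bC_\bal\ne 0 \Rightarrow Q\otimes^L\bC_\bal\ne 0$ is trivial, and for the converse I would invoke a Nakayama-type argument on the coherent $\sD_X[\bs]$-module $Q$ at the maximal ideal $\fm_\bal\subset\bC[\bs]$; the flexibility ``for some $k$ and for all $l\gg k$'' in the statement should absorb possible higher $\textup{Tor}$ contributions as $k$ and $l$ vary in their admissible ranges.

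The principal obstacle will be the naturality check asserted in the first paragraph: that the algebraic multiplication-by-$f_i^l$ map corresponds to the topological adjunction $h_!g_*\to Rj_*$ under the quasi-isomorphisms of Proposition \ref{lemRH2} and \cite[Theorem 2.5.1]{BVWZ}. This should follow formally from the explicit constructions of those quasi-isomorphisms, which are natural in the module $\sD_X[\bs]\bf^{\bs-\bk+l\bbe_i}$, but is not immediate from the statements alone and will require unpacking their proofs in Section \ref{secRH2}.
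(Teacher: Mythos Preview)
Your proposal is essentially the paper's approach: the paper's proof is a one-line reference to \cite[Proposition 2.5.2]{BVWZ}, replacing the submodule $\sD_X[\bs]\bf^{\bs+\bk}$ by $\sD_X[\bs]\bf^{\bs-\bk+l\bbe_i}$ and invoking Propositions \ref{propNSe} and \ref{lemRH2} exactly as you outline.

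One correction to your last paragraph. Your proposed Nakayama-type argument for passing from $Q\otimes^L_{\bC[\bs]}\bC_\bal$ to $Q\otimes_{\bC[\bs]}\bC_\bal$ does not work as stated: $Q$ is finitely generated over $\sD_X[\bs]$ but not over $\bC[\bs]$, so ordinary Nakayama does not apply, and the ``flexibility in $k,l$'' does not help since the modules stabilize once $k\ge k_\bal$ and $l\gg k$. The correct argument is already contained in the proof of Proposition \ref{propNSe}: since both $\sD_X[\bs]\bf^{\bs-\bk}\otimes^L\bC_\bal$ and $\sD_X[\bs]\bf^{\bs-\bk+l\bbe_i}\otimes^L\bC_\bal$ are concentrated in degree $0$ (this is part of what Proposition \ref{lemRH2} and \cite[Theorem 2.5.1]{BVWZ} assert), the long exact sequence gives
\[
0\to \textup{Tor}_1^{\bC[\bs]}(Q,\bC_\bal)\to h_!g_*\cM_\bla\to j_*\cM_\bla\to Q\otimes_{\bC[\bs]}\bC_\bal\to 0,
\]
which under $\DR_X$ becomes the sequence $0\to K\to\cdots\to C\to 0$ from the proof of Proposition \ref{propNSe}. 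That proof shows $K=0\iff C=0$, so $Q\otimes_{\bC[\bs]}\bC_\bal=0$ already forces the full cone to vanish. No separate Nakayama step is needed.
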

\begin{proof}
The proof is the same as the proof of \cite[Proposition 2.5.2]{BVWZ}, after replacing $\sD_X[\bs]\bf^{\bs+\bk}$ with $\sD_X[\bs]\bf^{\bs-\bk+l\bbe_i}$, for which we use now the more refined Proposition \ref{propNSe} and Proposition \ref{lemRH2}.
\end{proof}

The last proposition holds in the local analytic case as well, cf. \cite[Remark 2.5.3]{BVWZ}.

\section{Relative holonomic modules}\label{secRHM}

In this section we recall  some results on relative holonomic modules from \cite[\S3]{BVWZ}. Then we prove the main results of the article, up to a technical result which will be the focus of the next section. 

\subsection{} Let $X$ be a smooth complex affine irreducible algebraic variety of dimension $n$. By $\sD_X$ we denote the ring of linear algebraic differential operators on $X$. For  a regular commutative $\bC$-algebra integral domain $R$, we write $$\sA_R=\sD_X\otimes_\bC R\quad\text{and}\quad\sA=\sA_{\bC[\bs]}=\sD_X[\bs].$$

The order filtration from $\sD_X$ extends $R$-linearly to filtration of $\sA_R$, called the {\it relative filtration}. For a left (or right) $\sA_R$-module $N$, we can then talk about good filtrations and of the induced {\it relative characteristic variety} $\chr(N)$, the support of $\gr\, N$ in $T^*X\times\Spec R$. 

For a finitely generated left $\sA_R$ module $N$, one defines the dual in the derived category  of left $\sA_R$-modules by
\[\bD(N)\coloneqq \Rhom_{\sA_R}(N,\sA_R)\otimes_{\sO_X} \omega^{-1}_X[n],\]
where $\omega_X$ is the dualizing module of $X$, and the twist by $\omega^{-1}_X$ is needed only to pass from right $\sA_R$-modules to left ones. If $N$ is a finitely generated right $\sA_R$-module, then 
\[\bD(N)\coloneqq \Rhom_{\sA_R}(N,\sA_R)\otimes \omega_X[n]\]
is a complex of right $\sA_R$-modules.

\begin{definition}
A finitely generated $\sA_R$-module $N$ is {\it relative holonomic over $R$} if 
$$
\chr(N) = \bigcup_w \Lambda_w\times S_w
$$
for some irreducible conic Lagrangian subvarieties $\Lambda_w$ in $T^*X$, and some irreducible closed subvarieties $S_w$ of $\Spec R$. 
\end{definition}

The notion of relative holonomicity seems to have considered firstly by Sabbah, see for example \cite[II, Th\'eor\`eme 3.2]{Sab}.

Note that if $R$ is a field extension of $\bC$, then $N$ being relative holonomic over $R$ is equivalent to $N$ being holonomic in the usual sense over $\sD_{X_R}=\sA_R$, where $X_R=X\times_\bC R$, and therefore equivalent to  $H^i(\bD(N))=0$ if $i\not=0$. If $R$ is not a field, then in general, relative holonomicity does not imply that the derived $\sA_R$-dual has only one cohomology sheaf. This leads one to the following definition cf. \cite[3.3]{BVWZ}: 

\begin{definition} 
A non-zero finitely generated $\sA_R$-module $N$ is {\it $j$-Cohen-Macaulay} if $\Ext^k_{\sA_R}(N,\sA_R)= 0$ if $k\ne j$.  
\end{definition}

Recall the following terminology from \cite[A. IV]{Bj}. The main properties we need are summarized in \cite[\S 4. Appendix]{BVWZ}.

\begin{definition} For a non-zero finitely generated $\sA_R$-module $N$, the {\it grade number} of $N$ is
$$
j(N) =\min \{k\mid \textup{Ext}^k_{\sA_R}(M,\sA_R)\ne 0\}.
$$
The module $N$ is {\it pure}, or {\it $k$-pure}, if $j(N)=j(N')=k$ for every non-zero submodule $N'$.
\end{definition}

For a finitely generated $\sA_R$-module $N$, we write 
\[B_{ N}=\Ann_R( N)\]
and denote by $$Z(B_N)\subset \Spec R$$ 
the reduced subvariety  defined by the radical ideal of $B_{ N}$.

The Cohen-Macaulay property holds at least generically in the following situation, cf. \cite[Lemma 3.5.2]{BVWZ}:

\begin{prop}\label{propGeN}
Let $N$ be a finitely generated $\sD_X[\bs]$-module with grade number $j(N)=n+1$, and relative holonomic over $\bC[\bs]$. Then there exists an open affine subset $V=\Spec R$ in $\bC^r$ such that the intersection of   $V$ with each irreducible component of codimension one of $Z(B_N)$ is not empty, and
the  module $N\otimes_{\bC[\bs]}R$ is relative holonomic over $R$ and $(n+1)$-Cohen-Macaulay over $\sA_R$.
\end{prop}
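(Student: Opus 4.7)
The plan is to take $R = \bC[\bs]_f$ for a principal localization, choosing $f \in \bC[\bs]$ so that simultaneously the open $V = D(f)$ meets each codimension-one component of $Z(B_N)$ \emph{and} inverting $f$ kills every $\Ext^k_{\sA}(N,\sA)$ with $k > n+1$. Once such an $f$ is found, the conclusion follows from flat base change (for relative holonomicity) and from the formula $\Ext^k_{\sA_R}(N_R,\sA_R)\cong \Ext^k_{\sA}(N,\sA)\otimes_{\bC[\bs]}R$ (for the Cohen--Macaulay vanishing), which holds because $R$ is flat over $\bC[\bs]$ and $\sA$ is Noetherian.

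The key input, which I would import from the properties of relative holonomic modules recalled in \cite[\S4. Appendix]{BVWZ} (going back to Bj\"ork and Sabbah), is that each $M_k := \Ext^k_{\sA}(N,\sA)$ is itself a finitely generated $\sA$-module, relative holonomic over $\bC[\bs]$, with grade number $j(M_k)\geq k$. Concretely every component of $\chr(M_k)$ has the form $\Lambda\times S$ with $\codim_{\bC^r}S\geq k-n$, so for $k>n+1$ the codimension of $Z(B_{M_k})$ in $\bC^r$ is at least $2$. Since $\sA$ has finite global dimension, only finitely many $M_k$ are nonzero, hence
\[
W \;:=\; \bigcup_{k>n+1}Z(B_{M_k})\;\subset\;\bC^r
\]
is still closed of codimension $\geq 2$, while the hypothesis $j(N)=n+1$ forces $M_k=0$ for $k<n+1$.

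With $W$ in hand the production of $f$ is easy. Let $C_1,\ldots,C_m$ be the codimension-one components of $Z(B_N)$; since $\codim_{\bC^r} W\geq 2$ we may pick a closed point $p_i\in C_i\setminus W$ for each $i$. Prime avoidance applied to the ideal $I_W\subset \bC[\bs]$ and the finitely many maximal ideals $\fm_{p_i}$ produces $f\in I_W$ with $f(p_i)\neq 0$ for all $i$. Set $R=\bC[\bs]_f$ and $V=D(f)$; then $V\cap C_i\ni p_i$ while $V\cap W=\emptyset$. Flat base change along $\bC[\bs]\to R$ turns $\chr(N)=\bigcup_w\Lambda_w\times S_w$ into $\chr(N_R)=\bigcup_w\Lambda_w\times (S_w\cap V)$, and after decomposing each nonempty $S_w\cap V$ into its irreducible components in $\Spec R$ we see $N_R$ is relative holonomic over $R$. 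Via the flat-base-change isomorphism above, $\Ext^k_{\sA_R}(N_R,\sA_R)$ vanishes trivially for $k<n+1$; for $k>n+1$ the inclusion $Z(B_{M_k})\subset W\subset V(f)$ means some power of $f$ annihilates $M_k$, so the localization kills it. Nonvanishing of $N_R$ (because $V\cap Z(B_N)$ contains each $p_i$) then forces $\Ext^{n+1}_{\sA_R}(N_R,\sA_R)\neq 0$, and $N_R$ is $(n+1)$-Cohen--Macaulay.

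The main obstacle is really the black-box input of the second paragraph: the stability of relative holonomicity under the derived dual $\Rhom_{\sA}(-,\sA)$, together with the grade estimate $j(M_k)\geq k$. Without it one would have no a priori control on the support in $\Spec\bC[\bs]$ of the higher duals of $N$, and $W$ could in principle contain a codimension-one piece inside some $C_i$, leaving no affine $V$ that avoids $W$ and meets $C_i$ at once. Everything else in the proof is standard flat base change and prime avoidance.
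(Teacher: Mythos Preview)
The paper does not supply its own proof of this proposition, deferring instead to \cite[Lemma 3.5.2]{BVWZ}. Your argument is correct and follows the natural line one would expect there: the Auslander condition $j(\Ext^k_{\sA}(N,\sA))\ge k$, together with relative holonomicity of these Ext modules (so that grade $\ge k$ forces the $\bC[\bs]$-support to have codimension $\ge k-n$), produces a closed set $W\subset\bC^r$ of codimension $\ge 2$ to avoid, after which prime avoidance and flat base change for $\Ext$ finish the job exactly as you describe.
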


Then one can apply the following Nakayama-type lemma, cf. \cite[Proposition 3.4.3]{BVWZ}:

\begin{prop}\label{propNK} 
Let $\Spec R$ be a nonempty open subset of $\bC^r$. Let $ N$ be an $\sA_R$-module that is relative holonomic over $R$ and $(n+l)$-Cohen-Macaulay over $\sA_R$ for some $0\le l\le r$. Then 
\[\bal\in Z(B_{ N})\; \text{\it  if and only if }  N\otimes_{R}\bC_\bal\not=0,\]
where $\bC_\bal$ is the residue field of the closed point $\bal\in \Spec R$.
\end{prop}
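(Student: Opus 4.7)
The forward implication is immediate: if $N\otimes_R\bC_\bal\neq 0$, the nonzero quotient $N/\fm_\bal N$ is annihilated by both $\fm_\bal$ and by $B_N=\Ann_R(N)$, forcing $B_N\subseteq\fm_\bal$, hence $\bal\in Z(B_N)$. This direction uses neither the Cohen-Macaulay nor the relative-holonomicity hypothesis.

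For the converse, the plan is a Nakayama-type argument exploiting the Cohen-Macaulay structure. I would first localize at $\bal$, so that $R_\bal$ is a regular local ring of dimension $r$, and pick a regular system of parameters $t_1,\ldots,t_r$ generating $\fm_\bal R_\bal$; the Koszul complex $K_\bullet(t_1,\ldots,t_r)\to\bC_\bal$ is then a free $R_\bal$-resolution of length $r$. Tensoring with $N$ yields a complex whose cohomology computes $\textup{Tor}_*^R(N,\bC_\bal)$; applying $\Rhom_{\sA_R}(-,\sA_R)$ termwise produces a double complex whose associated spectral sequence, thanks to the $(n+l)$-Cohen-Macaulay hypothesis on $N$, collapses onto the row of cohomological degree $n+l$. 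The abutment $\Ext^\bullet_{\sA_R}(N\otimes_R^L\bC_\bal,\sA_R)$ is then concentrated in a single degree, and standard biduality for Cohen-Macaulay $\sA_R$-modules identifies its support over $\Spec R$ with $Z(B_N)$. The collapse should then isolate $\textup{Tor}^R_0(N,\bC_\bal)=N\otimes_R\bC_\bal$ as the only term that can carry the non-vanishing, forcing it to be nonzero whenever $\bal\in Z(B_N)$.

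The main obstacle will be ensuring that the spectral-sequence collapse genuinely isolates the $\textup{Tor}^R_0$ term rather than allowing some higher $\textup{Tor}^R_i$ to absorb the non-vanishing. This is where the relative-holonomicity hypothesis enters essentially: the product form $\Chr(N)=\bigcup_w\Lambda_w\times S_w$ controls the grade numbers of all intermediate cones uniformly, so that only the zeroth Tor can survive the collapse along $Z(B_N)$. As a parallel sanity check I would also try the associated-graded route: for a good filtration $F_\bullet N$, each piece $F_pN/F_{p-1}N$ is coherent over $\sO_X\otimes R$, so the classical commutative-algebra Nakayama lemma applies fiberwise, and $\supp_R(\gr N)$ coincides with the image of $\Chr(N)$ in $\Spec R$; matching this image with $Z(B_N)$ via the Cohen-Macaulay hypothesis would bypass the spectral sequence entirely and give the same equivalence.
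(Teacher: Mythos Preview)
The paper does not prove this proposition; it is quoted verbatim as \cite[Proposition 3.4.3]{BVWZ}, so there is no in-paper argument to compare against. I can still comment on your plan.

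The implication $N\otimes_R\bC_\bal\neq 0\Rightarrow\bal\in Z(B_N)$ is indeed elementary and your argument for it is correct (though this is the \emph{reverse} direction of the biconditional as written, not the forward one).

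For the substantive direction, neither of your two routes closes the gap you yourself flag. In the spectral-sequence approach, the collapse coming from $(n+l)$-Cohen--Macaulayness does yield a clean duality of the form $\textup{Tor}^R_i(N,\bC_\bal)\cong\bD_{\sD_X}\bigl(\textup{Tor}^R_{l-i}(E,\bC_\bal)\bigr)$ with $E=\Ext^{n+l}_{\sA_R}(N,\sA_R)$, and hence $\textup{Tor}^R_i(N,\bC_\bal)=0$ for $i>l$. But this does \emph{not} single out $\textup{Tor}_0$: nothing you have written rules out the possibility that $\bal\in Z(B_N)$ is witnessed only by some $\textup{Tor}_i$ with $0<i\le l$ while $\textup{Tor}_0=0$. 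The sentence ``standard biduality identifies its support with $Z(B_N)$'' is precisely the step that needs proof.

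Your associated-graded sanity check has a more basic problem. The pieces $F_pN/F_{p-1}N$ are coherent over $\sO_X\otimes R$, not finitely generated over $R$, so ordinary Nakayama over $R_\bal$ does not apply. Even granting $(\gr N)\otimes_R\bC_\bal\neq 0$ by working locally on $T^*X$, the natural comparison map is a \emph{surjection} $(\gr N)\otimes_R\bC_\bal\twoheadrightarrow\gr(N\otimes_R\bC_\bal)$, so nonvanishing of the source says nothing about $N\otimes_R\bC_\bal$. One needs to know that the induced filtration on the fiber is good (equivalently, that a regular system of parameters at $\bal$ acts as a regular sequence on $N$ in the filtered sense), and establishing this is exactly where the Cohen--Macaulay and relative-holonomicity hypotheses must be used in earnest; see the purity arguments around \cite[Lemma 3.4.2]{BVWZ} used elsewhere in this paper.
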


\subsection{} We let $F=(f_1,\ldots, f_r):X\ra\bC^r$ be a morphism. Let $\ba\in\bN^r$ be such that $\bf^\ba$ is not invertible. We consider now the left $\sD_X[\bs]$-module
$$M= \frac{\sD_X[\bs]\bf^{\bs}}{\sD_X[\bs]\bf^{\bs+\ba}}.$$
We will prove in Section \ref{secSl} the following:

\begin{theorem}\label{thmMais} $\;$
\begin{enumerate}[(i)]
\item The $\sD_X[\bs]$-module $M$ has grade number $j(M)=n+1$, and is relative holonomic over $\bC[\bs]$. 
\item Every irreducible component of $Z(B_{M})$ of codimension one is a hyperplane in $\bC^r$ of type $l_1s_1+\ldots +l_rs_r+b=0$ with $l_i\in \bQ_{\ge 0}$, $b\in \bQ_{>0}$, and for each such hyperplane there exists $i$ with $a_i\ne 0$ such that $l_i>0$.
\item Every irreducible component of $Z(B_{M})$ of codimension $>1$ can be translated by an element of $\bZ^r$ into a component of codimension one.
\end{enumerate}
\end{theorem}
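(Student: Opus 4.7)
The plan is to adapt the arguments of Maisonobe \cite{M} (who treated the case $\ba=\mathbf{1}$) and of \cite{BVWZ}. A preliminary observation simplifies the target: by the very definition of $M$, one has $B_M=\Ann_{\bC[\bs]}(M)=B_F^{\,\ba}$, so $Z(B_M)=Z(B_F^{\,\ba})$, and the three assertions become structural statements about the cyclic $\sD_X[\bs]$-module $M$. Once (i) is established, Propositions \ref{propGeN} and \ref{propNK} become applicable and reduce (ii) and (iii) to a pointwise analysis of the fibers of $M$ over a suitable affine open in $\bC^r$.

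\emph{Part (i).} Apply $\Rhom_\sA(\cdot,\sA)$ to the short exact sequence
$$
0\to \sD_X[\bs]\bf^{\bs+\ba}\to \sD_X[\bs]\bf^{\bs}\to M\to 0.
$$
Sabbah's foundational results give that $\sD_X[\bs]\bf^\bs$ is relative holonomic over $\bC[\bs]$ and $n$-Cohen-Macaulay over $\sA$, and the same holds for $\sD_X[\bs]\bf^{\bs+\ba}$ via the shift $\bs\mapsto\bs-\ba$. Under this identification the inclusion becomes right multiplication by $\bf^\ba$, which induces an injection on Verdier duals because $X$ is smooth and irreducible and $\bf^\ba\neq 0$. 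It follows that $\Ext^i_\sA(M,\sA)=0$ for $i\le n$, so $M$ is relative holonomic and $j(M)\ge n+1$. For the equality $j(M)=n+1$ it suffices to produce a codimension one component of $Z(B_M)$, which one obtains by specialization $\bs=(m_1 t,\ldots,m_r t)$ for suitable positive integers $m_i$, reducing to the classical one-variable theory applied to $\prod f_i^{m_i}$ (whose $b$-function is nontrivial since this product remains non-invertible).

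\emph{Part (ii).} The Sabbah--Gyoja theorem already provides the form $l_1 s_1+\ldots+l_r s_r+b=0$ with $l_i\in\bQ_{\ge 0}$ and $b\in\bQ_{\ge 0}$, so only the strict inequalities remain. The strict positivity $b>0$ follows from Kashiwara's negativity of $b$-function roots: restricting $\bs$ to an appropriate one-parameter family through a generic point of the hyperplane recovers a classical $b$-function whose roots lie in $\bQ_{<0}$, ruling out $b=0$. The assertion that some $l_i$ with $a_i\neq 0$ is strictly positive is proved on a log resolution $\pi:Y\to X$ of $\bf$: each codimension one component of $Z(B_M)$ corresponds to a $b$-function attached to an irreducible divisor $E\subset Y$ whose image meets the zero locus of $\bf^\ba$, and the weights $l_i$ are the multiplicities $\textup{ord}_E(\pi^* f_i)$; the constraint that $\pi^*\bf^\ba$ vanishes along $E$ forces some index $i$ with $a_i\neq 0$ to contribute.

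\emph{Part (iii) and main obstacle.} Given a codimension $\ge 2$ component $Z\subset Z(B_M)$, choose an open $V=\Spec R\subset\bC^r$ as in Proposition \ref{propGeN} so that $M_R=M\otimes_{\bC[\bs]}R$ is $(n+1)$-Cohen-Macaulay over $\sA_R$; Proposition \ref{propNK} then identifies $Z(B_{M_R})\cap V$ with the pointwise vanishing locus of $M_R$. Following Maisonobe, one uses the functional equations defining $B_F^{\,\ba}$ to relate fibers of $M$ at $\bal\in Z$ to fibers at $\bal+\bk$ for suitable $\bk\in\bZ^r$, placing $Z+\bk$ inside a codimension one component of $Z(B_M)$. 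This is the most delicate step: Maisonobe's original argument exploits the full symmetry of the $\ba=\mathbf{1}$ case, so the adaptation requires careful bookkeeping based on which $a_i$ are nonzero. The refinement in (ii)---that every codimension one component has some $l_i>0$ with $a_i\neq 0$---is precisely the input needed to guarantee enough directions in $\bZ^r$ along which the translation can be carried out.
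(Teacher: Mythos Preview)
Your outline for part (ii) via a log resolution is essentially the paper's argument (Lemma \ref{logres} and Proposition \ref{propDMA}). The problems lie in parts (i) and, more seriously, (iii).

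\textbf{Part (i).} You assert that $\sD_X[\bs]\bf^\bs$ is $n$-Cohen--Macaulay over $\sA$, attributing this to Sabbah. This is not established in the literature and is in general false: what one knows (Maisonobe, and Proposition \ref{prop:npure} here) is that $\sD_X[\bs]\bf^\bs$ is $n$-\emph{pure}, while Lemma \ref{lm:vanishN} says only that the higher $\Ext^j$ have \emph{proper} support over $\bC[\bs]$, not that they vanish. Without Cohen--Macaulayness, your long exact sequence only yields an injection $\Ext^n_\sA(M,\sA)\hookrightarrow\Ext^n_\sA(\sD_X[\bs]\bf^\bs,\sA)$, and your claim that ``multiplication by $\bf^\ba$ induces an injection on Verdier duals'' is not justified at that level. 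The paper avoids this entirely: it computes the $\sharp$-characteristic cycle $\CC^\sharp(M)=\overline{(T^*_UU)^\sharp}|_{\{\bf^\ba=0\}}$ (Corollary \ref{cor:char}), whose codimension in $T^*X\times\bC^r$ is $n+1$, and then invokes the equality of grade number with codimension of $\Ch^\sharp$. The key injectivity (of multiplication by $\bf^\ba$) is proved at the level of $\gr^\sharp$, where purity of $\gr^\sharp(\sD_X[\bs]\bf^\bs)$ does the work.

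\textbf{Part (iii).} This is where your outline diverges most and has a genuine gap. Propositions \ref{propGeN} and \ref{propNK} go in the wrong direction for this purpose: the open set $V$ produced by Proposition \ref{propGeN} is only guaranteed to meet each \emph{codimension one} component of $Z(B_M)$, so a codimension $\ge 2$ component $Z$ may lie entirely outside $V$, and you obtain no fiberwise information about it. Your invocation of ``functional equations \ldots\ to relate fibers of $M$ at $\bal\in Z$ to fibers at $\bal+\bk$'' is precisely the statement to be proved, not a method. What Maisonobe actually does---and what the paper extends to general $\ba$---is to introduce the \emph{maximal tame pure extension} $\cL\supset\sD_X[\bs]\bf^\bs$ inside $\sD_X[\bs,1/f]\bf^\bs$ (see \S\ref{subMPTE}). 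The point of $\cL$ is that $\cL/t\cL$ is $(n+1)$-\emph{pure}, hence $Z(B_{\cL/t\cL})$ is \emph{pure of codimension one} (Lemma \ref{lemMPT0} and Lemma \ref{lemMPT}). One then sandwiches $M$ between iterated $t$-translates of $\cL$ via the inclusions $t^l\cL\subset\sD_X[\bs]\bf^\bs\subset\cL\subset\sD_X[\bs]\bf^{\bs-l\ba}$, obtaining the support inclusions (\ref{eqSupp1}) and (\ref{eqSupp2}); these show that every component of $Z(B_M)$ sits inside a $\bZ^r$-translate of a component of $Z(B_{\cL/t\cL})$, and conversely, which gives (iii). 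Your proposal contains no analogue of $\cL$ and no mechanism to force purity of codimension one on an auxiliary object; without that, the argument does not close.
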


For ${\sD_X[\bs]\bf^{\bs}}/{\sD_X[\bs]\bf^{\bs+\mathbf{1}}}$, Theorem \ref{thmMais} (i) and (iii)  are due to Maisonobe \cite[R\'esultat 2]{M}, and Part (ii) without the strict positivity of $l_i$ is due to Sabbah and Gyoza \cite{G}.

Granted this theorem, we can prove all the theorems from the introduction.

\subsection{Analytic case} All the above results hold in the local analytic case as well, by appropriately adapting the arguments, cf. \cite[3.6]{BVWZ}.

\subsection{Proof of Theorem \ref{thrmMoreA}.}\label{subsV} Let $M$ be as in Theorem \ref{thmMais}. Then
$$
B_M=B_{F,i},
$$
as in the introduction, since $M$ is a cyclic $\sD_X[\bs]$-module generated by the class of $\bf^{\bs}$. Hence the claim is equivalent to Theorem \ref{thmMais} (ii) and (iii). $\hfill\Box$

\subsection{Proof of Theorem \ref{thrmMoreC}  --  reduction.} It is enough to prove the claim for $B_{F,i}$ with $f_i$ not invertible. Indeed, this follows from (\ref{eqIII}) and (\ref{eqI}).

\subsection{Proof of Theorem \ref{thrmMoreC}  -- one inclusion.}\label{subsPT}

We prove first that $\cS_i(F)$ is a subset of $\Exp(Z(B_{F,i}))$. The proof is a slight generalization of the proof that $\cS(F)\subset \Exp(Z(B_F))$ from \cite{B-ls}. 

Take $\bla$ in $(\bC^*)^r$ but not in $\Exp(Z(B_{F,i}))$. Fix $\bal\in \Exp^{-1}(\bla)$. Then $\bal+\bk$ does not lie in $Z(B_{F,i})$ for any $\bk\in\bZ^r$. So, fixing  $\bk\in\bZ^r$, there exists $b(\bs)\in \bC[\bs]$ such that $b(\bal+\bk)\ne 0$ but
$$
b(\bal+\bk)\bf^{\bal+\bk} =P\cdot \bf^{\bal+\bk+\bbe_i}
$$
for some $P\in\sD_{X}$. Hence there is an equality $$\sD_X\cdot \bf^{\bal+\bk}= \sD_X\cdot\bf^{\bal+\bk+\bbe_i}$$ for all $\bk\in\bZ^r$, as $\sD_X$-submodules of $\sO_X[f^{-1}]\bf^\bal$ where $f=\prod_{j=1}^rf_r$. In particular, for any fixed $\bk\in\bZ^r$,
$$
\sD_X\cdot f_i^{-l}\bf^{\bal+\bk} =\sD_X\cdot f_i^{l}\bf^{\bal+\bk}
$$
for all integers $l\gg 0$. Thus for integers $k_j\gg 0$ for $j\ne i$ and $l\gg 0$, we have the equality
$$
\sD_X\cdot f_i^{\al_i-l} \prod_{j\ne i}f_j^{\al_j-k_j} = \sD_X\cdot f_i^{\al_i+l} \prod_{j\ne i}f_j^{\al_j-k_j}.
$$
Taking the analytic de Rham complex on both sides, we obtain using Proposition \ref{lemRH2} an isomorphism of perverse sheaves on $X$,
\begin{equation}\label{eqRj}
Rj_* L_{\bla^{-1}}[n] = h_{!*} Rg_* L_{\bla^{-1}}[n]
\end{equation}
where $g$ and $h$ are as in \ref{eqGH}, and $L_{\bla^{-1}}$ is the local system on $U$ defined as in \ref{subDTE}. Thus ${\bla^{-1}}$ is not in $\cS_i(F)$ by Proposition \ref{propNSe}. Then also $\bla$ is not in $\cS_i(F)$, by  Theorem \ref{thrmMoreB}. $\hfill\Box$

\subsection{Proof of Theorem \ref{thrmMoreC}  -- the other inclusion.}\label{subsPT2}  
Conversely, we show now that $\Exp(Z(B_{F,i}))$ is a subset of $\cS_i(F)$. 

By Theorem \ref{thrmMoreA}  it is enough to show that for a generic point $\bal$ of a hyperplane $L\cdot \bs+ b=0$ contained in  $Z(B_{F,i})$, the image $\Exp(\bal)$ is in $\cS_i(F)$.

Fix an integral $k>0$ divisible by $L\cdot \bbe_i$, which we know to be $>0$ by Theorem \ref{thrmMoreA},  and let $\bk=(k,\ldots,k)\in\bZ^r$. Define
$
l_0= (L\cdot \bk)/(L\cdot\bbe_i),
$
so that $l_0$ is a positive integer. Then
$$
L\cdot (\bs + \bk)+b = L\cdot(\bs + l_0\cdot \bbe_i) +b 
$$
and hence
$$
\bal-\bk \in (Z(B_{F,i}) -l_0\cdot \bbe_i).
$$
By \cite[Proposition 4.7]{B-ls}, for any $l\in\bZ_{>0}$ we have
$$
Z(B_{F}^{\; l\cdot \bbe_i}) = \bigcup_{l'=0}^{l-1} (Z(B_{F,i})-l'\cdot\bbe_i).
$$
Hence 
$$\bal-\bk\in Z(B_{F}^{\; l\cdot \bbe_i})
$$
for all $l> l_0$. Thus  
$
\bal-\bk$ is a generic point on a hyperplane in $Z(B_{F}^{\; l\cdot \bbe_i})
$
for all $l\gg k\gg \Vert \bal \Vert$ with $k$ divisible by $L\cdot\bbe_i$.

Consider the $\sD_X[\bs]$-module
$$
M=\frac{\sD_X[\bs]\bf^{\bs-\bk}}{\sD_X[\bs]\bf^{\bs-\bk+l\bbe_i}}.
$$ 
Since $M$ is a cyclic $\sD_X[\bs]$-module, $B_M$ is up to a shift $\bk$ in $\bs$ equal to $B_{F}^{\; l\cdot \bbe_i}$. Thus, $\bal$ is a generic point on a hyperplane in $Z(B_M)$.

Now we can  apply  Theorem \ref{thmMais} ${\color{hot} (i)}$ to $M$, since shifting the variables $\bs$ by $\bk$ is harmless. This gives that $M$ satisfies the conditions of Proposition \ref{propGeN}, and thus by Proposition \ref{propNK},  
$$
M\otimes_{\bC[\bs]}\bC_\bal\not\simeq 0,
$$
since $\bal$ is generic on a codimension-one irreducible component of $Z(B_M)$. Then, by Proposition \ref{propSEr} we have that $\Exp(\bal)$ is in $\cS_i(F)$.  $\hfill\Box$

\section{Refining Maisonobe's theorem}\label{secSl}

This section is devoted to the proof of 
Theorem \ref{thmMais} by extending the arguments from Maisonobe \cite[R\'esultat 2]{M}. We keep the same notation as in Theorem \ref{thmMais}. We also provide extensions from reduced characteristic varieties to characteristic cycles of some results of \cite{M} of independent interest.

\subsection{} 
Besides the relative filtration on $\sD_X[\bs]$, which we denote now by $F^{\textup{rel}}$, we will also use the filtration $F^{\sharp}$ which on $\sD_X$ agrees with the usual filtration by the order of the operators, and such that the order of all $s_i$ is 1. That is,
$$
F^{\textup{rel}}_p(\sD_X[\bs]) = (F_p\sD_X)[\bs]\quad\text{and}\quad F^\sharp_p (\sD_X[\bs])=\sum_{q+|\bi|=p}F_q\sD_X\cdot\bs^\bi.
$$
By 
$$\gr, \gr^{\textup{rel}}, \gr^\sharp,\quad  \Ch, \Ch^{\textup{rel}}, \Ch^\sharp,\quad \CC,  \CC^{\textup{rel}}, \CC^\sharp,$$
we will denote the operations of taking the associated graded objects, characteristic varieties, and characteristic cycles (that is, the sum of the irreducible components of the characteristic varieties together with their multiplicities) with respect to the filtrations 
$$F_\bullet \sD_X, F_\bullet ^{\textup{rel}}(\sD_X[\bs]), F_\bullet^{\sharp}(\sD_X[\bs]),$$ respectively. 

Note that for a finitely generated $\sD_X[\bs]$-module $N$, $\Ch^\sharp(N)$ is conic in $T^*X\times\bC^r$, that is, homogeneous along the fibers over the projection to $X$.

The following is straight-forward: 
\begin{lemma}\label{lem:pj}
Let $p_2:T^*X\times\bC^r\ra\bC^r$ be the second projection. For any finitely generated $\sD_X[\bs]$-module $N$, 
\[p_2(\Ch^\sharp(N))=\supp_{\bC[\bs]}(\gr^\sharp N).\]
\end{lemma}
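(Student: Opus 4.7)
The plan is to choose a good $F^\sharp$-filtration on $N$, reduce the claim to a statement about annihilator ideals in the associated graded ring, and then read off both sides as vanishing loci of these ideals. Writing $M := \gr^\sharp N$ and $A := \gr^\sharp \sD_X[\bs] \simeq \gr \sD_X \otimes_\bC \bC[\bs]$, one has $\Spec A = T^*X \times \bC^r$, and goodness of the filtration guarantees that $M$ is finitely generated over $A$, so that $\Ch^\sharp(N) = \supp_A M = V_A(I)$ with $I = \Ann_A M$.

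The central commutative-algebra input I would record next: since $B := \bC[\bs]$ is a central subring of $A$ and $M$ is finitely generated over $A$, one has $\Ann_B M = B \cap I$, and moreover $\supp_B M = V_B(\Ann_B M)$. The first identity is immediate from centrality of $\bs$. For the second, I fix $A$-generators $m_1, \dots, m_k$ of $M$: centrality gives $\Ann_B M = \bigcap_i \Ann_B m_i$, and for any $\fq \in V_B(\Ann_B M)$, a prime-avoidance argument produces an index $i$ with $\Ann_B m_i \subseteq \fq$, so that $m_i/1 \neq 0$ in $M_\fq$.

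Finally I would argue that the image of $p_2 \colon V_A(I) \to \Spec B$ has Zariski closure equal to $V_B(B \cap I)$: the inclusion $B/(B \cap I) \hookrightarrow A/I$ is injective by the very definition of the contraction, hence corresponds to a dominant morphism of affine schemes, so its set-theoretic image is dense in $V_B(B \cap I)$. Combining the three observations yields
\[
p_2\bigl(\Ch^\sharp(N)\bigr) \;=\; V_B(B \cap I) \;=\; V_B(\Ann_B M) \;=\; \supp_{\bC[\bs]}(\gr^\sharp N)
\]
as closed subsets of $\bC^r$.

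The main (and essentially only) subtlety is that the two sides should be read consistently as closed subsets: in general the set-theoretic image of a morphism of affine schemes is only constructible rather than closed, so the left-hand side must be interpreted with Zariski closure, or equivalently with the convention $\supp = V(\mathrm{Ann})$ on both sides. Under this convention, which is the one used elsewhere in the paper when characteristic varieties are projected to parameter spaces, the lemma is genuinely straightforward as advertised.
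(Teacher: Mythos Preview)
Your argument is correct and is precisely the kind of unpacking the paper has in mind: it offers no proof beyond the phrase ``straight-forward'', so there is nothing to compare against. Your observation that the identity should be read with the convention $\supp = V(\Ann)$ on both sides (equivalently, that $p_2(\Ch^\sharp(N))$ denotes the Zariski closure of the set-theoretic image) is well taken and matches how the lemma is used later in the paper; one minor terminological quibble is that the step producing an index $i$ with $\Ann_B m_i \subseteq \fq$ is not prime avoidance but simply the fact that a prime containing a finite intersection of ideals contains one of them.
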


\subsection{} We address a generalization of the specializations from \cite[\S 1.4-1.7]{Gil}. We consider a flat morphism $p\colon \mathscr X\ra S$ between two smooth irreducible varieties over $\bC$, and let $\mathbf{0}\in S$ be a point locally given by a regular sequence $\{s_1=\ldots=s_r=0\}$. Let $\sX_{\mathbf 0}=p^{-1}(\mathbf{0})$ and let $i\colon \sX_{\mathbf 0}\ra \sX$ be the closed embedding. We set $\sX^o=\sX\setminus\sX_\mathbf{0}$.

Let $\sN$ be a coherent $\sO_\sX$-module $\sN$. Define 
\[\lim_{\bs\to\mathbf{0}} \sN=[Li^*\sN ]\in K(\sX_\mathbf{0})\]
in the Grothendieck group of coherent $\sO_{\sX_{\mathbf 0}}$-modules. 




We let $\supp(\sN)$ denote the cycle given by the sum of  the irreducible components of the support of $\sN$ with multiplicities. We let $\supp_k(\sN)$ be the purely $k$-dimensional part.  One can extend this association to a surjective group homomorphism
$$
\supp: K_k(\sX)\ra Z_k(\sX)
$$
where $K_k(\sX)$ is the subgroup $K_k(\sX)$ of $K(\sX)$ generated by modules with support of dimension $\le k$, and $Z_k(\sX)$ is the group of pure $k$-dimensional cycles on $\sX$. 

For a reduced subvariety $Z$ of $\sX$ intersecting $\sX_\mathbf{0}$ properly, we define 
\be\label{eqLim}
\lim_{\bs\to\mathbf{0}}Z, 
\ee
its specialization at $\sX_{\mathbf 0}$, to be the algebraic cycle on $\sX_\mathbf{0}$ given by the scheme-theoretical intersection of $Z$ and $\sX_{\mathbf 0}$.  

Iteratively applying \cite[Proposition 1.5.3]{Gil} one obtains: 

\begin{lemma}\label{lem:rspecialization} Let $k\ge r$. Suppose that each irreducible component of $\supp(\sN)$ is purely of dimension $k$ and intersects $\sX_\mathbf{0}$ properly. Then, in $Z_{k-r}(\sX_0)$, 
\[\lim_{\bs\to\mathbf{0}}\supp(\sN)=\supp \lim_{\bs\to\mathbf{0}} \sN.\]
\end{lemma}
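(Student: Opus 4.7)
The plan is to prove the lemma by induction on $r$, specializing along one coordinate at a time. The base case $r=1$ is \cite[Proposition 1.5.3]{Gil} applied directly to the flat morphism $p$ and the Cartier divisor $\sX_\mathbf{0} = \{s_1 = 0\} \subset \sX$.

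For the inductive step, set $\sX_1 = \{s_r = 0\} \subset \sX$ with closed embedding $i_1 : \sX_1 \hookrightarrow \sX$, so that $p$ restricts to a flat morphism $p_1 : \sX_1 \to \{s_r = 0\} \subset S$ along which $\mathbf{0}$ is cut out by the regular sequence $s_1, \ldots, s_{r-1}$. First I would carry out a dimension-theoretic verification: each irreducible component $Z$ of $\supp(\sN)$ meets $\sX_1$ properly, since $Z \subset \sX_1$ would force
\[\dim(Z \cap \sX_\mathbf{0}) \ge \dim Z - (r-1) = k - r + 1,\]
contradicting the proper-intersection hypothesis. Hence $Z \cap \sX_1$ is purely $(k-1)$-dimensional, and for each irreducible component $Z'$ of $Z \cap \sX_1$ the dimension squeeze
\[k - r \;\le\; \dim Z' - (r-1) \;\le\; \dim(Z' \cap \sX_\mathbf{0}) \;\le\; \dim(Z \cap \sX_\mathbf{0}) = k - r\]
shows $Z'$ meets $\sX_\mathbf{0}$ properly inside $\sX_1$ (or the intersection is empty, which is vacuously proper).

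With this in hand, apply Gillet's Proposition 1.5.3 to $\sN$ and the Cartier divisor $\sX_1$ to obtain
\[\lim_{s_r \to 0} \supp(\sN) = \supp[Li_1^* \sN] \quad \text{in } Z_{k-1}(\sX_1),\]
and then apply the inductive hypothesis to $[Li_1^* \sN]$ relative to $p_1$ and the regular sequence $(s_1, \ldots, s_{r-1})$. The verification above shows that $\supp[Li_1^* \sN]$ is purely $(k-1)$-dimensional with components properly intersecting $\sX_\mathbf{0}$ inside $\sX_1$, so the inductive hypothesis yields
\[\lim_{\bs' \to \mathbf{0}} \supp[Li_1^* \sN] = \supp[Li^* \sN],\]
where $\bs' = (s_1, \ldots, s_{r-1})$ and we have used $Li^* \simeq L i_1'{}^* \circ Li_1^*$ for the inclusion $i_1' : \sX_\mathbf{0} \hookrightarrow \sX_1$. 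Composing with the transitivity $\lim_{\bs \to \mathbf{0}} = \lim_{\bs' \to \mathbf{0}} \circ \lim_{s_r \to 0}$ of cycle-theoretic specialization through $\sX_1$ (which is valid because of the proper intersections just checked) completes the induction.

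The main obstacle is that $[Li_1^* \sN]$ is a virtual class in a Grothendieck group rather than a single coherent sheaf, so strictly speaking I am applying the lemma not to a coherent $\sO_{\sX_1}$-module but to an element of $K_{k-1}(\sX_1)$. This is handled by extending the statement of the lemma by linearity of the cycle map $\supp \colon K_{k-1}(\sX_1)\to Z_{k-1}(\sX_1)$, or equivalently by applying the statement to each Tor-sheaf $\mathcal{T}or_j^{\sO_\sX}(\sO_{\sX_1}, \sN)$ separately and summing with signs; the dimension estimates above, combined with flatness of $p$, ensure that each such Tor-sheaf is supported on $\supp(\sN)\cap \sX_1$ with the correct dimension bound.
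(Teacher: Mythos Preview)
Your proposal is correct and follows exactly the approach indicated in the paper: the paper's proof is the single sentence ``Iteratively applying \cite[Proposition 1.5.3]{Gil} one obtains'', and your argument carries out this iteration explicitly, with the necessary dimension checks to propagate the proper-intersection hypothesis through each step. The extra care you take with the K-theory class $[Li_1^*\sN]$ is a reasonable way to make the iteration precise.
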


\subsection{} For a conic irreducible Lagrangian subvariety $\Lambda\subset T^*U$,  define an $(n+r)$-dimensional subvariety of $T^*U\times \bC^r$ by
\[\Lambda^\sharp\coloneqq \{(x,\xi+\sum_{i=1}^r \al_id\log f_i (x),\bal)| (x,\xi)\in \Lambda, \bal=(\al_1,\ldots,\al_r)\in\bC^r\}.\]
This operation can be naturally generalized to conic Lagrangian cycles in $T^*U$.

\begin{theorem}\label{thm:char} For any $\ba\in \bZ^r$, there is an equality of cycles on $T^*X\times\bC^r$,
$$\CC^\sharp(\sD_X[\bs]\bf^{\bs+\ba})=\overline{(T^*_UU)^\sharp},$$
where $T^*_UU$ is the zero section of $T^*U$ and the closure is taken inside $T^*X\times \bC^r$.
\end{theorem}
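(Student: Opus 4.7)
My plan is to adapt Maisonobe's argument from \cite{M} for the reduced $F^\sharp$-characteristic variety of $\sD_X[\bs]\bf^{\bs}$ (case $\ba=\mathbf{1}$) by tracking multiplicities and allowing arbitrary $\ba\in\bZ^r$. I would equip $N_\ba:=\sD_X[\bs]\bf^{\bs+\ba}$ with the good $F^\sharp$-filtration $F^\sharp_k N_\ba:=F^\sharp_k(\sD_X[\bs])\cdot\bf^{\bs+\ba}$, making $\gr^\sharp N_\ba$ a coherent graded module over $\gr^\sharp\sD_X[\bs]\cong\sO_{T^*X}[\bs]$ with $\bs$ in degree one. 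Both $\CC^\sharp(N_\ba)$ and the target cycle $\overline{(T^*_UU)^\sharp}$ are conic of dimension $n+r$ under the $\bC^*$-action that jointly scales cotangent directions and $\bs$, so it suffices to identify their supports and compare multiplicities on each irreducible component.

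The first step is the direct computation over $U$. There $N_\ba|_U$ is cyclic with the relations
\[\partial_j\bf^{\bs+\ba}=\sum_i(s_i+a_i)\partial_j(\log f_i)\cdot\bf^{\bs+\ba},\qquad j=1,\ldots,n,\]
whose $F^\sharp$-principal symbols are $\xi_j-\sum_i\al_i\partial_j(\log f_i)$: the constants $a_i$ are of strictly lower $F^\sharp$-degree than the terms carrying $s_i$ and drop out of the principal symbol. These $n$ equations cut out $(T^*_UU)^\sharp$ inside $T^*U\times\bC^r$ in the expected codimension $n$, so $\gr^\sharp N_\ba|_U\cong\sO_{(T^*_UU)^\sharp}$. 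Consequently $\CC^\sharp(N_\ba)$ contains the closure $\overline{(T^*_UU)^\sharp}$ as an irreducible component, with multiplicity one.

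The main obstacle is excluding any other irreducible components of $\CC^\sharp(N_\ba)$. By the previous step, such components would have dimension $n+r$, be conic, and be supported inside $\pi^{-1}(D)\times\bC^r$, where $\pi\colon T^*X\to X$. To rule them out I would follow Maisonobe's initial-ideal analysis in \cite{M}: globally valid elements of $\Ann_{\sD_X[\bs]}\bf^{\bs+\ba}$ are produced by clearing denominators (multiplying the relations above by $f=\prod_i f_i$, or more generally by suitable monomials in the $f_i$), and a case-by-case examination along the natural stratification of $D$ indexed by which of the $D_i$ contain a given stratum shows that the resulting initial ideal cuts out exactly $\overline{(T^*_UU)^\sharp}$ set-theoretically. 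Since the constants $a_i$ only modify subprincipal terms of these generators, Maisonobe's combinatorial analysis transfers verbatim from $\ba=\mathbf{1}$ to any $\ba\in\bZ^r$. Combined with the multiplicity-one computation of the previous step, this yields the claimed equality $\CC^\sharp(\sD_X[\bs]\bf^{\bs+\ba})=\overline{(T^*_UU)^\sharp}$ of cycles.
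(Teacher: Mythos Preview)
Your approach is essentially the same as the paper's: both defer the set-theoretic equality $\Ch^\sharp(\sD_X[\bs]\bf^{\bs+\ba})=\overline{(T^*_UU)^\sharp}$ to the existing literature and then upgrade to cycles, the paper by remarking that the arguments of \cite{BMM3} (together with \cite{KK}, and \cite{Gil} for $r=1$) already yield the cycle-level statement, and you by the equivalent observation that the closure is irreducible and the generic multiplicity over $U$ is one. One minor slip: $\sD_X[\bs]\bf^{\bs}$ is the case $\ba=\mathbf{0}$, not $\ba=\mathbf{1}$.
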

\begin{proof} The equality for the underlying sets is proved in \cite{KK}, \cite{BMM3}. When $r=1$, the equality for cycles is \cite[Proposition 2.5]{Gil}. The arguments in \cite{BMM3} indeed prove the equality for cycles in general. 
\end{proof} 

We write $\overline{(T^*_UU)^\sharp}|_{\bs=\mathbf{0}}$ for the algebraic cycle of the scheme theoretical intersection of $\overline{(T^*_UU)^\sharp}$ and $p_2^{-1}(\mathbf{0})$. The following generalizes \cite[Theorem 3.2]{Gil}; the statement for characteristic varieties is due to \cite{BMM-cons}.



\begin{coro}\label{thm:char1} There is an equality of cycles on $T^*X$,
$$\CC(\sO_X(*D))=\lim_{\bs\to \mathbf{0}} \overline{(T^*_UU)^\sharp}=\overline{(T^*_UU)^\sharp}|_{\bs=\mathbf{0}}.$$
\end{coro}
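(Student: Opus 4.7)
The second equality is the definition \eqref{eqLim} applied to the reduced cycle $\overline{(T^*_UU)^\sharp}$: that variety projects onto $\bC^r$ with $n$-dimensional fibres, hence meets $T^*X\times\{\mathbf{0}\}$ properly. The content is therefore the first equality, and my plan is to realise $\sO_X(*D)$, with its standard order filtration, as the $\bs=\mathbf{0}$ specialisation of a filtered $\sD_X[\bs]$-module whose $F^\sharp$-characteristic cycle is supplied by Theorem \ref{thm:char}, and then transport the resulting equality through Lemma \ref{lem:rspecialization}.

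First, choose $\ba\in\bZ^r$ with $a_i\le 0$ and $|a_i|$ large enough that $\sD_X\bf^\ba=\sO_X(*D)$ (e.g.\ $\ba=-N\cdot\mathbf{1}$ for $N\gg 0$), and set $\sM=\sD_X[\bs]\bf^{\bs+\ba}$, $\sN=\gr^\sharp\sM$. Theorem \ref{thm:char} gives $\supp\sN=\overline{(T^*_UU)^\sharp}$ of pure dimension $n+r$ in $T^*X\times\bC^r$, whose components meet $T^*X\times\{\mathbf{0}\}$ properly by the parametrisation of $\Lambda^\sharp$. Lemma \ref{lem:rspecialization} with $k=n+r$ then yields
\[
\overline{(T^*_UU)^\sharp}\big|_{\bs=\mathbf{0}}\;=\;\lim_{\bs\to\mathbf{0}}\supp(\sN)\;=\;\supp\bigl(\lim_{\bs\to\mathbf{0}}\sN\bigr)\quad\text{in }Z_n(T^*X).
\]
On the algebraic side, $\sM/(\bs)\sM=\sD_X\bf^\ba=\sO_X(*D)$, and the quotient filtration induced from $F^\sharp_\bullet\sM$ is the standard order filtration on $\sO_X(*D)$: setting $\bs=\mathbf{0}$ in $\sum_{q+|\bi|\le p}F_q\sD_X\cdot\bs^\bi\cdot\bf^{\bs+\ba}$ retains only the $\bi=\mathbf{0}$ summand $F_p\sD_X\cdot\bf^\ba$. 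Consequently $i^*\sN=\sN/(\bs)\sN=\gr\sO_X(*D)$, whose top-dimensional cycle is precisely $\CC(\sO_X(*D))$.

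To finish, I would verify that the higher Tors do not alter the top-dimensional cycle in $\supp[Li^*\sN]$. For $k\ge 1$ the support of $\mathrm{Tor}^{\bC[\bs]}_k(\sN,\bC_\mathbf{0})$ lies inside $\supp(\sN)\cap(T^*X\times\{\mathbf{0}\})$, which is purely $n$-dimensional by proper intersection. At a generic point of any irreducible component of this intersection, proper intersection in codimension $r$ makes $s_1,\ldots,s_r$ a regular sequence on the stalk of $\sN$, so $\mathrm{Tor}_k$ vanishes there and hence has support of dimension $<n$, contributing $0$ to $Z_n(T^*X)$. Combining,
\[
\CC(\sO_X(*D))\;=\;\supp_n(i^*\sN)\;=\;\supp\bigl(\lim_{\bs\to\mathbf{0}}\sN\bigr)\;=\;\overline{(T^*_UU)^\sharp}\big|_{\bs=\mathbf{0}},
\]
which is the first equality.

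The main obstacle is the generic regularity of $s_1,\ldots,s_r$ on $\sN$ along the $n$-dimensional components of the limit, or equivalently the absence of embedded $\bs$-primary components of $\sN$ on those loci. This is exactly what the iterated use of \cite[Proposition 1.5.3]{Gil} in the proof of Lemma \ref{lem:rspecialization} requires, so invoking that lemma is precisely what disposes of the obstacle.
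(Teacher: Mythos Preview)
Your overall strategy matches the paper's, but the step ``Consequently $i^*\sN=\sN/(\bs)\sN=\gr\sO_X(*D)$'' contains a genuine gap. What the preceding sentence establishes is that the \emph{quotient} filtration on $\sM/(\bs)\sM$ is the order filtration, hence $\gr\bigl(\sM/(\bs)\sM\bigr)=\gr\sO_X(*D)$. But $i^*\sN=\sN/(\bs)\sN$ is $(\gr^\sharp\sM)/(\bs)(\gr^\sharp\sM)$, and taking $\gr^\sharp$ does not commute with passing to the quotient by $(\bs)$ unless the multiplication maps $s_j\colon\sM\to\sM$ are strict for $F^\sharp$---equivalently, unless $s_1,\ldots,s_r$ is already a regular sequence on $\gr^\sharp\sM$. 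In general there is only a surjection $\sN/(\bs)\sN\twoheadrightarrow\gr\bigl(\sM/(\bs)\sM\bigr)$, and its kernel could a priori carry an $n$-dimensional cycle, spoiling your final equality. This is \emph{not} handled by Lemma~\ref{lem:rspecialization}: that lemma equates $\supp[Li^*\sN]$ with $\lim_{\bs\to\mathbf 0}\supp\sN$, but says nothing about identifying $[Li^*\sN]$ with $[\gr\sO_X(*D)]$. The paper closes exactly this gap by taking a filtered free resolution $\cP^\bullet\to\sM$ and using the spectral sequence of the filtered complex $i^*\cP^\bullet$ to obtain $[i^*\gr^\sharp\cP^\bullet]=[\gr(i^*\cP^\bullet)]$ in $K(T^*X)$; this is the K-theoretic commutation of $\gr^\sharp$ with restriction to $\bs=\mathbf 0$ that your argument is missing.

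A second, smaller omission: the identification $\sM/(\bs)\sM=\sD_X\bf^\ba$ is not automatic from your choice of $\ba$. There is an evident surjection, but injectivity (indeed, the stronger statement $Li^*\sM\simeq\sO_X(*D)$) is a nontrivial input the paper takes from \cite[Corollary 5.4, Theorem 1.3]{WZ}; merely arranging $\sD_X\bf^\ba=\sO_X(*D)$ does not suffice.
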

\begin{proof} The second equality is just the statement that every irreducible component of $\overline{(T^*_UU)^\sharp}$ intersects $p_2^{-1}(\mathbf{0})$ properly. By definition, the support of $\overline{(T^*_UU)^\sharp}$ is irreducible, being the closure of the graph of a function defined on $U\times\bC^r$. Hence it is of dimension $n+r$. We will simply refer to \cite{BMM-cons} for the fact that it intersects the special fiber properly.

Now, for some fixed $k>0$, we let $\bk=(k,\ldots, k)$ and we fix a filtered free resolution 
\[\cP^\bullet \ra \sD_X[\bs]\bf^{\bs-\bk}\]
with respect to the $\sharp$-filtration. By definition, $\cP^k$ are finite direct sums of free $\sD_X[\bs]$-modules of rank one equipped with the filtration $F^\sharp$ possibly up to a shift, and the differentials are strict with respect to the filtration; see \cite[A.IV, Proposition 4.1]{Bj} where is it also shown that filtered free resolutions exists
 at least locally on $X$. 
 
By strictness, we obtain a free resolution of the associated graded module
\[\gr^\sharp \cP^\bullet\ra \gr^\sharp (\sD_X[\bs]\bf^{\bs-\bk}).\]
Let $$i\colon T^*X\simeq T^*X\times \{\mathbf 0\}\to T^*X\times \bC^r$$ be the closed embedding. Then
the complex $Li^*\gr^\sharp (\sD_X[\bs]\bf^{\bs-\bk})$ is quasi-isomorphic to $i^*\gr^\sharp \cP^\bullet$. Thus
$$
\lim_{\bs\ra\mathbf{0}} \gr^\sharp (\sD_X[\bs]\bf^{\bs-\bk}) =[i^*\gr^\sharp \cP^\bullet]
$$
in $K_n(T^*X)$.  By Theorem \ref{thm:char} and the above discussion, the support of $\gr^\sharp (\sD_X[\bs]\bf^{\bs-\bk})$ intersects $p_2^{-1}(0)$ properly. Then we can apply Lemma \ref{lem:rspecialization} to obtain that
$$
\lim_{\bs\ra\mathbf{0}}  \overline{(T^*_UU)^\sharp} = \supp\; [i^*\gr^\sharp \cP^\bullet]
$$
as $n$-cycles in $T^*X$.

On the other hand, while $i^*$ and $\gr^\sharp$ do not necessarily commute, they do so in $K(T^*X)$, that is,
$$
[i^*\gr^\sharp \cP^\bullet]=[\gr (i^*\cP^\bullet)].
$$
To see this, note that the derived pullback of the filtered complex $(\cP^\bullet, F^\sharp_\bullet \cP^\bullet)$ gives us a filtered complex of $\sD_X$-modules and hence a convergent spectral sequence 
\[ H^\bullet(i^*\gr^\sharp \cP^\bullet)\Rightarrow \gr ( H^\bullet(i^*\cP^\bullet)),\]
see for example \cite[\S 3]{Lau}.

Now, by \cite[Corollary 5.4, Theorem 1.3]{WZ}, we know that for $k\gg0$
\[Li^*(\sD_X[\bs]\bf^{\bs-\bk})\simeq \sO_X(*D).\]
Therefore, we have 
\[[\gr (i^*\cP^\bullet)]=[\gr(\sO_X(*D))],\]
the support of which as a cycle is $\CC(\sO_X(*D)$.
\end{proof}

\begin{rmk}
Suppose $N$ is a regular holonomic $\sD_X$-module and consider its localization $N(*D)$ along $D$, which is also regular holonomic. We can assume that $N(*D)$ is generated by a coherent $\sO_X$-submodule $N_0$,  that is, $N(*D)=\sD_X\cdot N_0$. Then we have the coherent $\sD_X[\bs]$-module $\sD_X[\bs]\bf^\bs\cdot N_0$. By \cite{BMM-cons} and \cite[Corollary 5.4]{WZ}, Theorem \ref{thm:char} and Corollary \ref{thm:char1} hold in this more generalized setting, proven for characteristic varieties in \cite{BMM-cons}, that is, 
\[\CC^\sharp(\sD_X[\bs]\bf^\bs\cdot N_0)=\overline{\Lambda_U^\sharp} \textup{ and } \CC(N(*D))=\lim_{\bs\to \mathbf{0}}\overline{\Lambda_U^\sharp}=(\overline{\Lambda_U^\sharp})|_{\bs=\mathbf{0}},\]
where $\Lambda_U=\CC(N|_U)$. 
\end{rmk}

The next theorem slightly generalizes the second statement of \cite[R\'esultat 6]{M} to characteristic cycles:
\begin{theorem}\label{thrmGenM6}
For every $\ba\in \bZ^r$,
$$\CCr(\sD_X[\bs]\bf^{\bs+\ba})=\CC(\sO_X(*D))\times \bC^r.$$
\end{theorem}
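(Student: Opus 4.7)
Plan: Parallel the proof of Corollary \ref{thm:char1}, replacing the $\sharp$-filtration by the relative filtration $F^{\textup{rel}}$ and specializing at a generic $\bal \in \bC^r$ instead of at $\mathbf{0}$. The first reduction is to the case $\ba = \mathbf{0}$: the shift automorphism $s_i \mapsto s_i - a_i$ of $\sD_X[\bs]$ identifies $\sD_X[\bs]\bf^\bs$ with $\sD_X[\bs]\bf^{\bs+\ba}$ and translates $\CCr$ by $-\ba$ along the $\bC^r$ factor, while the target $\CC(\sO_X(*D)) \times \bC^r$ is $\bC^r$-translation invariant. Set $N = \sD_X[\bs]\bf^\bs$.

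Take, locally on $X$, a filtered free resolution $\cP^\bullet \to N$ with respect to $F^{\textup{rel}}$ (cf.\ \cite[A.IV, Prop.\ 4.1]{Bj}). By strictness, $\gr^{\textup{rel}}\cP^\bullet \to \gr^{\textup{rel}} N$ is a free resolution. For a closed point $\bal \in \bC^r$ with inclusion $i_\bal \colon T^*X \simeq T^*X \times \{\bal\} \hookrightarrow T^*X \times \bC^r$, the filtered complex $(\cP^\bullet, F^{\textup{rel}}_\bullet \cP^\bullet)$ gives via derived pullback a convergent spectral sequence (cf.\ \cite[\S 3]{Lau})
\[
H^\bullet\bigl(i_\bal^* \gr^{\textup{rel}} \cP^\bullet\bigr) \Rightarrow \gr\bigl(H^\bullet(i_\bal^* \cP^\bullet)\bigr),
\]
yielding $[i_\bal^* \gr^{\textup{rel}} \cP^\bullet] = [\gr (i_\bal^* \cP^\bullet)]$ in the Grothendieck group of coherent $\sO_{T^*X}$-modules.

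For $\bal$ in a dense open $W \subset \bC^r$ avoiding the zero locus of the Bernstein-Sato ideal $B_F$, the derived fiber $Li_\bal^* N$ is concentrated in degree $0$ and equals $\sD_X \bf^\bal = \sO_X(*D)\bf^\bal$, by Kashiwara's generic-invertibility result and a $\textup{Tor}$-vanishing argument (extending \cite[Corollary 5.4]{WZ} from the fiber at $\mathbf{0}$ to a generic fiber, using the $\bC[\bs]$-torsion-freeness of $N$ inside the flat module $\sO_X[\bs, f^{-1}]\bf^\bs$). Hence $i_\bal^* \cP^\bullet$ resolves $\sO_X(*D)\bf^\bal$ and $[\gr(i_\bal^*\cP^\bullet)] = [\gr\, \sO_X(*D)]$. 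Applying $\supp$ and Lemma \ref{lem:rspecialization} (whose proper-intersection hypothesis at generic $\bal$ holds since each component of $\Chr(N)$ projects surjectively onto $\bC^r$), we obtain, as cycles on $T^*X$,
\[
\CCr(N) \cap \bigl(T^*X \times \{\bal\}\bigr) = \CC(\sO_X(*D)) \qquad \text{for all } \bal \in W.
\]
Combined with the set-theoretic statement of \cite[R\'esultat 6]{M}, that every irreducible component of $\Chr(N)$ has the form $\Lambda \times \bC^r$ for $\Lambda \subset T^*X$ an irreducible Lagrangian, the fiberwise identification forces each multiplicity of $\CCr(N)$ along $\Lambda \times \bC^r$ to equal the multiplicity of $\Lambda$ in $\CC(\sO_X(*D))$, proving the theorem.

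The main obstacle is the base-change step at a generic $\bal$: proving $\textup{Tor}^{\bC[\bs]}_i(N, \bC_\bal) = 0$ for $i \geq 1$ and verifying the proper-intersection hypothesis for Lemma \ref{lem:rspecialization}. Both play for the generic fiber the role that \cite[Corollary 5.4]{WZ} and the cited proper-intersection statement of \cite{BMM-cons} played at $\bs = \mathbf{0}$ in the proof of Corollary \ref{thm:char1}.
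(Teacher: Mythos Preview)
Your strategy matches the paper's: reduce to $\ba=\mathbf{0}$, invoke the set-theoretic equality from \cite{BMM-cons} and \cite{M}, and then specialize $\CCr(N)$ at a single fiber via the filtered-resolution/spectral-sequence argument of Corollary \ref{thm:char1}, now with $F^{\textup{rel}}$ in place of $F^\sharp$, to read off the multiplicities. The paper states exactly this: equation \eqref{eq:limchrel} is derived ``by the same argument as in the proof of Corollary \ref{thm:char1}.''

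The one substantive difference is the choice of fiber. You specialize at a generic $\bal$ and are then forced to justify $N\otimes^L_{\bC[\bs]}\bC_\bal \simeq \sO_X(*D)\bf^\bal$ --- the ``main obstacle'' you flag at the end. The paper instead specializes at $\bal=-\bk$ for integral $k\gg 0$, where \cite[Corollary 5.4, Theorem 1.3]{WZ} gives $N\otimes^L_{\bC[\bs]}\bC_{-\bk}\simeq \sO_X(*D)$ outright, so your obstacle never arises. Your route can be completed (generic flatness of $\gr^{\textup{rel}}N$ over $\bC[\bs]$ handles the Tor-vanishing, and a single good $\bal$ suffices since every component of $\Chr(N)$ has the form $\Lambda\times\bC^r$), but one correction is needed: the locus where $\sD_X\bf^\bal=\sO_X(*D)\bf^\bal$ is only dense, not open --- you need $\bal-k\mathbf{1}\notin Z(B_F)$ for \emph{every} $k\ge 0$ --- so drop ``open'' from your description of $W$. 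The paper's integer-point choice is the shortcut that makes the extra work unnecessary.
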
 
\begin{proof}
We prove the case $\ba=\mathbf{0}$, the general case is similar. By \cite{BMM-cons} and \cite{M},  the required equality is true for the underlying sets.  We now prove that they are the same as cycles. 

We first observe that 
\begin{equation}\label{eq:limchrel}
\CC(\sD_X[\bs]\bf^\bs\otimes^L_{\bC[\bs]} \bC_{\bal})=\lim_{\bs\to \bal}\CCr(\sD_X[\bs]\bf^{\bs})
\end{equation}
for every $\bal\in \bC^r$, where $\bC_{\bal}=\bC[\bs]/(\bs-\bal)$ is the residue field at $\bal$. This follows by the same argument as in the proof of Corollary \ref{thm:char1}, replacing $\sharp$-filtration with the relative filtration and specializing to $p_2^{-1}(\bal)$.

By \cite[Corollary 5.4, Theorem 1.3]{WZ}, we have for integral $k\gg 0$ and $\bk=(k,\ldots,k)$,
\[\sD_X[\bs]\bf^\bs\otimes^L_{\bC[\bs]}\bC_{-\bk}\simeq \sO_X(*D)\]
and hence 
\[\CC(\sO_X(*D))=\lim_{\bs\to -\bk}\CCr(\sD_X[\bs]\bf^{\bs}).\]
Since we know the required equality is true for the underlying sets, the above limit as $\bs\to -\bk$ is just taking the fiber at $\bs=-\bk$. Then the required equation for cycles follows.
\end{proof}

We will use the following due to Maisonobe \cite[Proposition 14]{M}:
\begin{prop}\label{prop:npure} Let $\ba\in \bZ^r$. Every non-zero { $\sD_X[\bs]$-submodule} of $\sD_X[\bs]\bf^{\bs+\ba}$ has grade number equal to $n$. Equivalently,  $\sD_X[\bs]\bf^{\bs+\ba}$ is $n$-pure over $\sD_X[\bs]$.
\end{prop}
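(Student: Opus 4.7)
The plan is to combine the $\sharp$-characteristic cycle computation in Theorem~\ref{thm:char} with Bj\"ork's theory of purity for filtered rings with Auslander-regular associated graded. Throughout we work with the $\sharp$-filtration on $\sD_X[\bs]$, whose associated graded $\gr^\sharp\sD_X[\bs]=\sO_{T^*X}[\bs]$ is a polynomial ring, hence regular and Cohen-Macaulay of dimension $2n+r$. By \cite[A.IV]{Bj}, for a finitely generated $\sD_X[\bs]$-module equipped with a good $\sharp$-filtration, both the grade number and $k$-purity lift between the module and its associated graded; hence the statement is equivalent to showing that $\gr^\sharp(\sD_X[\bs]\bf^{\bs+\ba})$ is an $n$-pure $\sO_{T^*X}[\bs]$-module for every $\ba\in\bZ^r$.

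Setting $N:=\sD_X[\bs]\bf^{\bs+\ba}$, we would first establish $j(N)=n$. By Theorem~\ref{thm:char}, $\CC^\sharp(N)=\overline{(T^*_UU)^\sharp}$ is irreducible of dimension $n+r$ with multiplicity one, so its codimension in $T^*X\times\bC^r$ equals $n$. Since $\sO_{T^*X}[\bs]$ is Cohen-Macaulay, $j(\gr^\sharp N)$ equals the minimal height among associated primes of $\gr^\sharp N$, and the support being irreducible of codimension $n$ forces the unique minimal prime to have height $n$; thus $j(N)=n$.

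It remains to establish $n$-purity, i.e.\ to exclude embedded primes of $\gr^\sharp N$ of height strictly greater than $n$. On the open dense subset $(T^*_UU)^\sharp\cong U\times\bC^r$ of the support, the restriction of $N$ is $\sO_U[\bs]\bf^{\bs+\ba}$, free of rank one over $\sO_U[\bs]$, so $\gr^\sharp N$ is locally free of rank one there; any embedded associated prime must therefore be supported in the boundary $\overline{(T^*_UU)^\sharp}\setminus(T^*_UU)^\sharp$, which lies over $D\subset X$. The hard part will be ruling out these boundary embedded primes: we would follow Maisonobe's~\cite[Proposition 14]{M} local analysis, choosing a filtered free resolution of $N$ compatible with the $\sharp$-filtration and performing a Koszul-type syzygy computation at generic points of each boundary stratum to show that the principal-symbol ideal of $\Ann_{\sD_X[\bs]}(\bf^{\bs+\ba})$ is locally unmixed there. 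The argument will be uniform in $\ba\in\bZ^r$ because Theorems~\ref{thm:char} and~\ref{thrmGenM6} already are.
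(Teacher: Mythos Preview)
The paper does not prove this proposition at all; it simply quotes it from Maisonobe \cite[Proposition~14]{M}. So there is no ``paper's proof'' to compare against beyond the citation.

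Your reduction framework is sound. The implication ``$\gr^\sharp N$ pure $\Rightarrow$ $N$ pure'' does follow from Bj\"ork's theory: if $N'\subset N$ had $j(N')>j(N)$, the induced filtration would give $\gr^\sharp N'\hookrightarrow \gr^\sharp N$ with $j(\gr^\sharp N')=j(N')>j(N)=j(\gr^\sharp N)$, contradicting purity of $\gr^\sharp N$. Your computation of $j(N)=n$ from Theorem~\ref{thm:char} is correct and non-circular, since that theorem is established from \cite{KK,BMM3} independently of Maisonobe. Your localization of the difficulty to embedded primes supported over the boundary $\overline{(T^*_UU)^\sharp}\setminus (T^*_UU)^\sharp$ is also correct: over $T^*U\times\bC^r$ the annihilator of $\bf^{\bs+\ba}$ has principal $\sharp$-symbols $\xi_j-\sum_i s_i\,\partial_j(\log f_i)$, so $\gr^\sharp N$ restricts to the structure sheaf of the smooth graph $(T^*_UU)^\sharp$ there.

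The gap is that the entire content of the proposition lies in the boundary step, and your plan for it is to ``follow Maisonobe's \cite[Proposition~14]{M} local analysis''---but that \emph{is} the result being proved. The phrase ``Koszul-type syzygy computation at generic points of each boundary stratum to show the principal-symbol ideal is locally unmixed'' does not specify an argument: you have not said what the generators or relations are near a boundary point, why a Koszul complex would be exact there, or why unmixedness of the symbol ideal would survive passage to the closure. It is also unclear that Maisonobe's own argument proceeds via the $\sharp$-filtration rather than a different route (e.g.\ the relative filtration, or a direct homological argument inside $\sO_X[\bs,1/f]\bf^\bs$). As written, your plan is a valid scaffolding wrapped around what is still, in substance, the same citation the paper makes.
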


Next statement for characteristic varieties for the case $\ba=\mathbf{1}$ is due to \cite{BMM3}: 

\begin{coro}\label{cor:char} Let $\ba\in\bN^r$ such that $\bf^\ba$ is not invertible. Then
$$\CC^\sharp\left(\dfrac{\sD_X[\bs]\bf^\bs}{\sD_X[\bs]\bf^{\bs+\mathbf{a}}}\right)=\overline{(T^*_UU)^\sharp}|_{\{\bf^\ba=0\}}.$$
\end{coro}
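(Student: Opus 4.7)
The plan is to establish a short exact sequence of coherent $\gr^\sharp \sD_X[\bs]$-modules on $T^*X\times\bC^r$,
$$
0\to \gr^\sharp \sD_X[\bs]\bf^\bs \xto{\cdot\,\bf^\ba}\gr^\sharp \sD_X[\bs]\bf^\bs \to \gr^\sharp M \to 0,
$$
and to read off the cycle identity using Theorem \ref{thm:char}. The central idea is that the inclusion $\sD_X[\bs]\bf^{\bs+\ba}\hookrightarrow\sD_X[\bs]\bf^\bs$ becomes, at the $\sharp$-associated graded level, simply multiplication by the regular function $\bf^\ba\in\sO_X=F_0^\sharp\sD_X[\bs]$.

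First, I would equip the canonical short exact sequence
$$
0\to \sD_X[\bs]\bf^{\bs+\ba}\xto{\iota}\sD_X[\bs]\bf^\bs \to M\to 0
$$
with the natural good $\sharp$-filtrations in which each generator $\bf^{\bs+\ba}$, $\bf^\bs$, and the class of $\bf^\bs$ sits in degree zero, with $M$ receiving the quotient filtration. Since $\bf^\ba\in F_0^\sharp\sD_X[\bs]$, the map $\iota$ is $\sharp$-filtered and induces a well-defined map $\gr^\sharp\iota$.

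Next, identify $\gr^\sharp\sD_X[\bs]\bf^{\bs+\ba}$ with $\gr^\sharp\sD_X[\bs]\bf^\bs$: the annihilator $\Ann_{\sD_X[\bs]}(\bf^{\bs+\ba})$ is obtained from $\Ann_{\sD_X[\bs]}(\bf^\bs)$ by the constant shift $\bs\mapsto\bs-\ba$, which preserves $\sharp$-principal symbols (since $\sigma^\sharp(s_j-a_j)=s_j$); hence the two initial $\sharp$-ideals agree. Call the common graded module $\cE$; by Theorem \ref{thm:char}, $\CC^\sharp(\cE)=\overline{(T^*_UU)^\sharp}$. Under this identification, $\gr^\sharp\iota$ sends the canonical generator to $\bf^\ba$ times the canonical generator, i.e.\ it is multiplication by $\bf^\ba$. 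Since $\overline{(T^*_UU)^\sharp}$ is irreducible (being the closure of the graph of $(x,\bal)\mapsto\sum_i\al_i\,d\log f_i(x)$ on $U\times\bC^r$) and its generic point lies over $U$ where $\bf^\ba$ does not vanish, multiplication by $\bf^\ba$ is injective on $\cE$. The displayed short exact sequence then gives $\gr^\sharp M\simeq \cE/\bf^\ba\cE$, and taking cycles yields
$$
\CC^\sharp(M) \;=\; \overline{(T^*_UU)^\sharp}\cdot[\{\bf^\ba=0\}] \;=\; \overline{(T^*_UU)^\sharp}\big|_{\{\bf^\ba=0\}},
$$
with the multiplicity $a_i$ along each component lying over $D_i$ coming from the order of vanishing of $\bf^\ba=\prod f_j^{a_j}$.

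The main obstacle will be the strictness of $\iota$ with respect to these natural filtrations: a priori the induced filtration $F_p^\sharp\sD_X[\bs]\bf^\bs\cap \iota(\sD_X[\bs]\bf^{\bs+\ba})$ may properly contain $\iota(F_p^\sharp\sD_X[\bs]\bf^{\bs+\ba})$, so the associated graded sequence is not automatically exact on the left. I would resolve this by an Artin--Rees type argument showing that the induced and natural filtrations on $\sD_X[\bs]\bf^{\bs+\ba}$ are commensurate and hence determine the same $\sharp$-characteristic cycle; alternatively, one can work additively in $K_0$ of coherent sheaves on $T^*X\times\bC^r$ with any compatible filtrations and extract the top-dimensional part, using the set-theoretic inclusion of \cite{BMM3} to pin down the support.
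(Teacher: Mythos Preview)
Your overall strategy matches the paper's exactly: identify $\gr^\sharp(\sD_X[\bs]\bf^{\bs+\ba})\to\gr^\sharp(\sD_X[\bs]\bf^\bs)$ with multiplication by $b=\bf^\ba$ on $\cE=\gr^\sharp(\sD_X[\bs]\bf^\bs)$, show this is injective, deduce strictness and hence $\gr^\sharp M\simeq\cE/b\cE$, and then read off the cycle via Theorem \ref{thm:char} and Lemma \ref{lem:rspecialization}.

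The gap is in your justification of injectivity. Irreducibility of $\overline{(T^*_UU)^\sharp}$ together with the fact that $b$ is generically nonvanishing on it only tells you that the kernel of $\cdot\, b$ on $\cE$ is supported in strictly smaller dimension; it does not force the kernel to vanish, because $\cE$ could a priori have embedded associated primes (Theorem \ref{thm:char} controls only the top-dimensional cycle, not the full primary decomposition of $\cE$). The paper closes this gap with purity: by Proposition \ref{prop:npure} the module $N=\sD_X[\bs]\bf^\bs$ is $n$-pure over $\sD_X[\bs]$, and purity transfers to $\gr^\sharp N$ by \cite[Proposition 4.4.1]{BVWZ}. A pure module has no nonzero submodule of strictly higher grade number (equivalently, of strictly smaller support dimension), so the kernel of $\cdot\, b$ must be zero. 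Once injectivity of the graded map is established, strictness of $\iota$ follows immediately (a nonzero $\sharp$-symbol cannot map to zero), and your proposed Artin--Rees or $K_0$ workaround becomes unnecessary.
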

\begin{proof} Let $b=\bf^\ba$. By $\{b=0\}$ in the statement we mean the effective divisor on $T^*X\times\bC^r$, with possibly non-trivial multiplicities, defined by $b$.  Note that $\overline{(T^*_UU)^\sharp}|_{\{b=0\}}$ is a well-defined cycle, since  $b$ restricts to a non-trivial Cartier divisor on $\overline{(T^*_UU)^\sharp}$, and it equals the limit construction from (\ref{eqLim}) applied to the flat map $b:T^*X\times\bC^r\ra \bC$.

Write $N=\sD_X[\bs]\bf^\bs$ and $N_1=\sD_X[\bs]\bf^{\bs+\ba}$  for this proof. By Proposition \ref{prop:npure}, $N$ is $n$-pure over $\sD_X[\bs]$. We apply the argument similar to the proof of \cite[Lemma 3.4.2]{BVWZ}. The outcome is that the $\gr^\sharp$ of $N_1\to N$ is given by 
$$ 
\gr^\sharp N \xrightarrow{b}  \gr^\sharp N,
$$
which is injective (since $\gr^\sharp N$ is pure by  \cite[Proposition 4.4.1]{BVWZ}). Considering the induced filtration on $N/N_1$, the injectivity of $\gr^\sharp N \xrightarrow{b}  \gr^\sharp N$ gives an isomorphism
\be\label{eqNbN}
\gr^\sharp(N/N_1)\simeq (\gr^\sharp N)/b(\gr^\sharp N),
\ee
see also \cite[(3.3)]{BVWZ}.

Then we obtain equalities of cycles
$$
\overline{(T^*_UU)^\sharp}|_{\{b=0\}} = \CC^\sharp(N)|_{\{b=0\}} = \supp ((\gr^\sharp N)|_{\{b=0\}}) = \supp (\gr^\sharp(N/N_1)),
$$
where the first equality is by Theorem \ref{thm:char},  the second equality is by Lemma \ref{lem:rspecialization} which in this case is the same as \cite[Proposition 1.5.3]{Gil}, and the third equality is by (\ref{eqNbN}).
\end{proof}

\begin{prop}\label{propDMA1}
Let $\ba\in\bN^r$ such that $\bf^\ba$ is not invertible. Let 
$$
M= \frac{\sD_X[\bs]\bf^{\bs}}{\sD_X[\bs]\bf^{\bs+\ba}}.
$$
Then $j(M)=n+1$ and $M$ is relative holonomic over $\bC[\bs]$.
\end{prop}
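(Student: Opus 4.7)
The plan is to work with the short exact sequence
\[0 \to N_1 \to N \to M \to 0,\]
where $N = \sD_X[\bs]\bf^{\bs}$ and $N_1 = \sD_X[\bs]\bf^{\bs+\ba} = bN$ with $b = \bf^{\ba}$. Both $N$ and $N_1$ are $n$-pure by Proposition \ref{prop:npure}, and the $\sharp$- and relative-filtration characteristic cycles of $N$ are known explicitly by Theorem \ref{thm:char} and Theorem \ref{thrmGenM6}.

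For the grade number, the identification $\gr^\sharp M \simeq (\gr^\sharp N)/b(\gr^\sharp N)$ from (\ref{eqNbN}), established right before Corollary \ref{cor:char} using purity of $\gr^\sharp N$ and irreducibility of $\Ch^\sharp(N)=\overline{(T^*_UU)^\sharp}$, reduces the computation to a commutative $\Ext$ calculation over the regular ring $\sO_{T^*X\times\bC^r}$. Applying $\Ext^\bullet_{\sO}(-,\sO)$ to the sequence $0 \to \gr^\sharp N \xrightarrow{b} \gr^\sharp N \to \gr^\sharp M \to 0$, and using that $\Ext^n(\gr^\sharp N,\sO)$ is again $n$-pure with the same irreducible support (so $b$ acts injectively on it as well), yields $j_{\sO}(\gr^\sharp M) = n+1$. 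Since the induced $\sharp$-filtration on $M$ is good, Bj\"ork's graded-equals-ungraded comparison then gives $j_{\sD_X[\bs]}(M)=n+1$.

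For relative holonomicity, the set-theoretic inclusion $\Ch^{\textup{rel}}(M) \subseteq \Ch^{\textup{rel}}(N) = \bigcup_i \Lambda_i \times \bC^r$, with $\Lambda_i$ the conic Lagrangian components of $\CC(\sO_X(*D))$ from Theorem \ref{thrmGenM6}, is immediate from $M$ being a quotient of $N$; combined with $\dim \Ch^{\textup{rel}}(M) \le n+r-1$ from Step 1, each component of $\Ch^{\textup{rel}}(M)$ is a proper subvariety of some $\Lambda_i \times \bC^r$. To upgrade this to the product form $\Lambda_w \times S_w$ required by relative holonomicity, I would use that $M \otimes_{\bC[\bs]}\bC_\bal$ is a quotient of the holonomic $\sD_X$-module $\sD_X\bf^{\bal}$ for each $\bal$, hence itself holonomic with characteristic cycle a union of the $\Lambda_i$; a constructibility argument for $\CC(M\otimes_{\bC[\bs]}\bC_\bal)$ as a function of $\bal\in\bC^r$ (analogous to the specialization reasoning in Corollary \ref{cor:char} and Lemma \ref{lem:rspecialization}) then stratifies $\bC^r$ into locally closed $S_w$ on which the fiberwise characteristic cycle is constant, forcing $\Ch^{\textup{rel}}(M)=\bigcup_w \Lambda_w\times\overline{S_w}$.

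The hard part is Step 2. One cannot simply mimic Step 1 with the relative filtration, because $b$ vanishes identically on every component $\Lambda_i \times \bC^r$ whose $\Lambda_i$ projects into $\{b=0\}\subseteq X$; so $b$ is not a non-zero-divisor on $\gr^{\textup{rel}} N$, and the clean "regular-element quotient" argument fails. The fiberwise-holonomicity plus constructibility route substitutes for it and is the place where Maisonobe's argument for $\ba=\mathbf{1}$ needs genuine adaptation to arbitrary $\ba \in \bN^r$.
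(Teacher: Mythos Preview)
Your Step~1 for $j(M)=n+1$ is correct and close in spirit to the paper's argument, though more elaborate than necessary: the paper simply observes that $j(M)=j(\gr^\sharp M)$ by \cite[A.IV 4.15]{Bj}, and that for the regular commutative ring $\gr^\sharp(\sD_X[\bs])$ the latter equals $\codim\,\Ch^\sharp(M)=n+1$ by Corollary~\ref{cor:char}. Your $\Ext$-sequence argument recovers this but is not needed.

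Step~2, however, is where you have gone badly astray. Relative holonomicity is \emph{inherited by subquotients}: this is \cite[Lemma~3.2.4]{BVWZ}, and it is precisely what the paper invokes. Since $N=\sD_X[\bs]\bf^\bs$ is relative holonomic by Theorem~\ref{thrmGenM6} and $M$ is a quotient of $N$, you are done in one line. There is no ``hard part'' here and no adaptation of Maisonobe's argument is required for this step; the subquotient lemma works verbatim for any $\ba$.

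Your proposed substitute---fiberwise holonomicity plus a constructibility argument for $\bal\mapsto\CC(M\otimes_{\bC[\bs]}\bC_\bal)$---is not only unnecessary but also incomplete as stated. You have not established the constructibility, and the identification of $M\otimes_{\bC[\bs]}\bC_\bal$ with $\sD_X\bf^\bal/\sD_X\bf^{\bal+\ba}$ requires controlling $\mathrm{Tor}_1^{\bC[\bs]}(\sD_X[\bs]\bf^{\bs+\ba},\bC_\bal)$, which you have not addressed. Even if this route could be made to work, it would be a significant detour around a one-line citation.
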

\begin{proof}
By Theorem \ref{thrmGenM6}, $\sD_X[\bs]\bf^\bs$ is relative holonomic over $\bC[\bs]$. Since every nonzero subquotient  is also relative holonomic by \cite[Lemma 3.2.4]{BVWZ}, it follows that $M$ is also regular holonomic.

By \cite[A.IV 4.15]{Bj}, the grade number  of $M$ over $\sD_X[\bs]$ is the grade number of $\gr^\sharp M$ over $\gr^\sharp(\sD_X[\bs])$. Since the latter is the codimension of $\Ch^\sharp(M)$ in $T^*X\times\bC^r$, we have $j(M)=n+1$ by Corollary \ref{cor:char}. 
\end{proof}

\subsection{Maximal pure tame extension.}\label{subMPTE}  For every finitely generated $\sD_X[\bs]$-submodule $N$ of $\sD_X[\bs,1/f]\bf^\bs$ there exists some $\bk\in \bZ_{\ge0}^r$ such that \[N\subseteq \sD_X[\bs]\bf^{\bs-\bk}.\] Thus by Proposition \ref{prop:npure}, all $N$ are $n$-pure. Consider the family of all finitely generated $\sD_X[\bs]$-submodules $N$ of $\sD_X[\bs,1/f]\bf^\bs$ such that $\sD_X[\bs]\bf^\bs\subset N$ and the grade number $$j(N/\sD_X[\bs]\bf^\bs)\ge n+2.$$ This family always has a unique maximal member $\cL$, called the {\it maximal tame pure extension} of $\sD_X[\bs]\bf^\bs$, by \cite[Proposition A.IV 2.10]{Bjo}.  

We extend slightly the properties of $\cL$ proven in \cite{M}.

\begin{lemma}\label{lemMPT0} Let $\ba\in\bN^r$ such that $\bf^\ba$ is not invertible.
Let
\[t\colon \sD_X[\bs,1/f]\bf^\bs\ra \sD_X[\bs,1/f]\bf^\bs.\]
be the action defined by substituting $\bs$ by $\bs+\ba$. Then:
\begin{enumerate}[(i)]
\item $t \cL\subset \cL$,
\item\label{Ai} ${\cL}/{t \cL}$ is $(n+1)$-pure over $\sD_X[\bs]$ and relative holonomic over $\bC[\bs]$.
\end{enumerate}
\end{lemma}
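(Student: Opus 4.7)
The plan is to adapt the strategy of \cite[R\'esultat 7]{M} from the case $\ba = \mathbf{1}$ to arbitrary $\ba \in \bN^r$. The crucial preliminary observation is that $t$ is a $\sD_X$-linear automorphism of $\sD_X[\bs, 1/f]\bf^\bs$ satisfying $t(s_i m) = (s_i + a_i) t(m)$, so it carries $\sD_X[\bs]$-submodules to $\sD_X[\bs]$-submodules. Setting $N_0 := \sD_X[\bs]\bf^\bs$ and $N_1 := t(N_0) = \sD_X[\bs]\bf^{\bs+\ba}$, the automorphism $t$ induces a bijection between the family defining $\cL$ and the analogous family around $N_1$. By uniqueness $t\cL$ is the maximal tame pure extension of $N_1$, so $t\cL/N_1$ is $(n+2)$-pure.

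For (i), I will show that $\cL' := t\cL + \cL$ lies in the family defining $\cL$; by maximality this gives $\cL' = \cL$, hence $t\cL \subset \cL$. Since $\cL' \supset \cL \supset N_0$, it suffices to prove $j(\cL'/N_0) \ge n+2$. The short exact sequence
\[0 \to \cL/N_0 \to \cL'/N_0 \to t\cL/(t\cL \cap \cL) \to 0\]
and the Ext long exact sequence reduce this to showing $j(t\cL/(t\cL \cap \cL)) \ge n+2$. The inclusions $N_1 \subset N_0 \subset \cL$ and $N_1 \subset t\cL$ give $N_1 \subset t\cL \cap \cL$, so $(t\cL \cap \cL)/N_1$ is a submodule of the $(n+2)$-pure module $t\cL/N_1$ and hence has grade $\ge n+2$. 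The Ext LES applied to
\[0 \to (t\cL \cap \cL)/N_1 \to t\cL/N_1 \to t\cL/(t\cL \cap \cL) \to 0\]
then yields the desired bound.

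For (ii), first compute grades. From $0 \to N_0/N_1 \to \cL/N_1 \to \cL/N_0 \to 0$ with $j(N_0/N_1) = n+1$ (Proposition~\ref{propDMA1}) and $j(\cL/N_0) \ge n+2$, the Ext LES gives $j(\cL/N_1) = n+1$. Then from $0 \to t\cL/N_1 \to \cL/N_1 \to \cL/t\cL \to 0$ with $j(t\cL/N_1) \ge n+2$, we obtain $j(\cL/t\cL) = n+1$. For the $(n+1)$-purity of $\cL/t\cL$, I first establish that $N_0/N_1$ is itself $(n+1)$-pure by refining the $\sharp$-characteristic cycle analysis of Corollary~\ref{cor:char}, using equidimensionality of $\overline{(T^*_U U)^\sharp}|_{\{\bf^\ba = 0\}}$. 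Then $(N_0 \cap t\cL)/N_1$ is a submodule both of the pure $N_0/N_1$ (grade $n+1$) and of $t\cL/N_1$ (grade $\ge n+2$), forcing it to vanish. Thus $N_0 \cap t\cL = N_1$ and $N_0/N_1 \hookrightarrow \cL/t\cL$ with cokernel $(\cL/N_0)/(t\cL/N_1)$ of grade $\ge n+2$. Now if $Q \subset \cL/t\cL$ is a non-zero submodule with $j(Q) \ge n+2$, lift to $\tilde Q$ with $t\cL \subset \tilde Q \subset \cL$; the same argument forces $\tilde Q \cap N_0 = N_1$, whence $\tilde Q/N_1 \cong (\tilde Q + N_0)/N_0 \hookrightarrow \cL/N_0$ has grade $\ge n+2$, so $\tilde Q$ belongs to the family for $N_1$. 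By maximality of $t\cL$ in that family, $\tilde Q \subset t\cL$, contradicting $Q \ne 0$. Relative holonomicity of $\cL/t\cL$ follows since $\cL \subset \sD_X[\bs]\bf^{\bs - \bk}$ for some $\bk \gg 0$, which is relative holonomic by Theorem~\ref{thrmGenM6}, and relative holonomicity passes to subquotients by \cite[Lemma 3.2.4]{BVWZ}.

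The main obstacle is establishing the $(n+1)$-purity of $N_0/N_1$, which requires extending Maisonobe's $\sharp$-characteristic cycle purity argument to general $\ba \in \bN^r$; once this is in hand, the remaining assertions reduce to careful diagram chasing combined with Ext long exact sequence manipulations.
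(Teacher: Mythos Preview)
Your argument for (i) is essentially the paper's: both reduce to the fact that $(t\cL+N_0)/N_0$ is a quotient of $t\cL/N_1$ and hence has grade $\ge n+2$, so maximality of $\cL$ gives $t\cL\subset\cL$. Your phrasing via $\cL'=t\cL+\cL$ and two short exact sequences is a little more elaborate but arrives at the same point.

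For (ii) the routes genuinely diverge. You compute $j(\cL/t\cL)=n+1$ from Proposition~\ref{propDMA1}; the paper instead invokes \cite[A.IV, Prop.~2.11]{Bjo}: since $\cL$ is $n$-pure and all successive quotients $t^l\cL/t^{l+1}\cL$ are $\bs$-translates of $\cL/t\cL$, they cannot all have grade $n$, whence $j(\cL/t\cL)\ge n+1$. Both are valid.

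The substantive difference is in the purity step. The obstacle you single out --- proving that $N_0/N_1$ is $(n+1)$-pure --- is avoidable, and the paper does avoid it. Once you lift a nonzero $Q\subset\cL/t\cL$ with $j(Q)\ge n+2$ to $t\cL\subset\tilde Q\subset\cL$, the short exact sequence $0\to t\cL/N_1\to\tilde Q/N_1\to Q\to 0$ already yields $j(\tilde Q/N_1)\ge n+2$ directly, so $\tilde Q$ lies in the family for $N_1$ and maximality of $t\cL$ forces $\tilde Q=t\cL$; the detour through $\tilde Q\cap N_0=N_1$ and the embedding into $\cL/N_0$ is not needed. The paper's version is the $t$-translate of this: it applies $t^{-1}$, observes that $t^{-1}\tilde Q\supsetneq\cL$ cannot lie in the family for $N_0$, and extracts the contradiction from $t^{-1}\tilde Q/\cL\cong\tilde Q/t\cL$. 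Either way, purity of $N_0/N_1$ never enters, so what you flagged as the main obstacle is in fact a self-imposed one.
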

\begin{proof}
(i) We have that $t \cL$ is the maximal tame pure extension of $\sD_X[\bs]\bf^{\bs+\ba}$ and hence {over $\sD_X[\bs]$} the grade number $j(t \cL/\sD_X[\bs]\bf^{\bs+\ba})\ge n+2$. Since 
\[\frac{t \cL}{\sD_X[\bs]\bf^{\bs+\ba}}\twoheadrightarrow \dfrac{t \cL}{t \cL\bigcap\sD_X[\bs]\bf^{\bs}}\simeq  \dfrac{t \cL+\sD_X[\bs]\bf^{\bs}}{\sD_X[\bs]\bf^{\bs}} ,\]
and the grade number of a quotient can only increase (cf. \cite[Theorem 3.2.2]{BVWZ}), we have $$j\left(\dfrac{t \cL+\sD_X[\bs]\bf^{\bs}}{\sD_X[\bs]\bf^{\bs}}\right) \ge n+2.$$  By maximality, $t \cL+\sD_X[\bs]\bf^{\bs}\subset \cL$ and hence $t \cL\subset\cL$.

(ii) Since $\cL/t\cL$ is a subquotient of $\sD_X[\bs]\bf^{\bs-\bk}$ for some $\bk\in\bZ^r$, it is relative holonomic over $\bC[\bs]$ by Theorem \ref{thrmGenM6} and by \cite[Lemma 3.2.4]{BVWZ}. 

Next we prove that every nonzero $\sD_X[\bs]$-submodule of $\cL/t \cL$ has grade number $\le n+1$. Assume on the contrary there exists $\cL'$ such that
\[t \cL \subset \cL'\subset \cL\]
and $j(\cL'/t\cL)\ge n+2$. Since $\cL\subset t^{-1}\cL'$, we have $j(t^{-1}\cL'/\sD_X[\bs]\bf^\bs)\le n+1$ by maximality. Considering the short exact sequence
\[0\ra \dfrac{\cL}{\sD_X[\bs]\bf^\bs}\ra\dfrac{t^{-1}\cL'}{\sD_X[\bs]\bf^\bs}\ra\dfrac{t^{-1}\cL'}{\cL}\ra0\]
we hence have $j(t^{-1}\cL'/\cL)\le n+1$, applying \cite[Theorem 3.2.2]{BVWZ} again. However, since $t^{-1}\cL'/\cL\simeq \cL'/t \cL$, we get a contradiction. 

By \cite[Proposition A. IV. 2.11]{Bjo}, not all quotients $t^l\cL/t^{l+1} \cL$ with $l\ge 0$ have the same grade number as $\cL$. Since $t^l\cL/t^{l+1} \cL\simeq \cL/t\cL$ for all $l\ge 0$ as $\sD_X[\bs]$-modules via a translation in $\bs$, we then must have $j(\cL/t\cL)\ge n+1$. 
\end{proof}

\begin{lemma}\label{lemMPT}
With the same notation as in the previous lemma, we have:
\begin{enumerate}[(i)]
\item  $\CC^\sharp(\cL)=\CC^\sharp(\sD_X[\bs]\bf^{\bs}).$
\item \[\CC^\sharp(\cL/t\cL)=\overline{(T^*_UU)^\sharp}|_{\{\bf^\ba=0\}}=\CC^\sharp\left(\dfrac{\sD_X[\bs]\bf^\bs}{\sD_X[\bs]\bf^{\bs+\ba}}\right).\]
\item Every irreducible component of the zero loci of $$\Ann_{\bC[\bs]}(\cL/t\cL)\quad\text{and}\quad\Ann_{\bC[\bs]}(\gr^\sharp(\cL/t\cL))$$ is a hyperplane the type $l_1s_1+\ldots l_rs_r+b=0$ with $l_i\in\bN$.
\end{enumerate}
\end{lemma}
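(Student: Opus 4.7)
The overall plan is to handle the three parts in sequence, using the purity of $\cL$ as the common thread and mirroring the arguments of Theorem \ref{thm:char} and Corollary \ref{cor:char} with $\cL$ in place of $\sD_X[\bs]\bf^{\bs+\ba}$. For (i), I will exploit the short exact sequence
\[
0 \ra \sD_X[\bs]\bf^\bs \ra \cL \ra \cL/\sD_X[\bs]\bf^\bs \ra 0.
\]
By definition, $\cL \subseteq \sD_X[\bs]\bf^{\bs-\bk}$ for some $\bk\in \bN^r$, and by Proposition \ref{prop:npure} the latter is $n$-pure; hence so is $\cL$, and $\Ch^\sharp(\cL)$ is purely of dimension $n+r$. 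Sandwiching
\[
\overline{(T^*_UU)^\sharp} = \Ch^\sharp(\sD_X[\bs]\bf^\bs) \subseteq \Ch^\sharp(\cL) \subseteq \Ch^\sharp(\sD_X[\bs]\bf^{\bs-\bk}) = \overline{(T^*_UU)^\sharp}
\]
via Theorem \ref{thm:char} forces equality as a set, and multiplicity one is checked at the generic point of the irreducible variety $\overline{(T^*_UU)^\sharp}$, which lies over $U$ where $\cL$, $\sD_X[\bs]\bf^\bs$ and $\sD_X[\bs]\bf^{\bs-\bk}$ all restrict to the same module $\sD_U[\bs]\bf^\bs$.

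The second equality in (ii) is immediate from Corollary \ref{cor:char}. For the first equality, the crucial observation will be that $t\cL = \bf^\ba \cL$: writing $t = \bf^\ba \circ \sigma$ with $\sigma$ the $\sD_X$-linear translation $\bs\mapsto \bs+\ba$, the maximality characterization of the tame pure extension forces $\sigma(\cL)=\cL$. I will then mirror the proof of Corollary \ref{cor:char} with $\cL$ in the role of $N$: the $n$-purity of $\cL$ together with \cite[Proposition 4.4.1]{BVWZ} implies that $\gr^\sharp \cL$ is pure and that $b = \bf^\ba$ is a non-zerodivisor on it, so
\[
\gr^\sharp(\cL/t\cL) \cong \gr^\sharp \cL / b\cdot \gr^\sharp \cL.
\]
By part (i), the cycle on the right equals $\CC^\sharp(\cL)|_{\{b=0\}} = \overline{(T^*_UU)^\sharp}|_{\{\bf^\ba = 0\}}$, yielding the first equality.

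For the graded annihilator in (iii), Lemma \ref{lem:pj} together with (ii) reduces the claim to computing the projection $p_2(\overline{(T^*_UU)^\sharp}|_{\{\bf^\ba=0\}}) \subseteq \bC^r$. I will pass to a log resolution $\pi\colon Y \ra X$ of $(X,D)$ with $\pi^{-1}(D) = \bigcup_j E_j$ a simple normal crossings divisor and $n_{ij}$ the order of vanishing of $f_i$ along $E_j$; in local coordinates on $Y$ where the $E_j$ are coordinate hyperplanes, the variety $\overline{(T^*_{V}V)^\sharp}$ with $V = \pi^{-1}(U)$ is governed by the equations $y_j \eta_j = \sum_i n_{ij} s_i$, and restricting to $\{y_{j_0} = 0\}$ with $\sum_i a_i n_{ij_0} > 0$ forces $\sum_i n_{ij_0} s_i = 0$. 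Projecting to $\bC^r$ yields a hyperplane with $\bN$-coefficients, and the union over admissible $j_0$ exhausts the projection. For the ungraded annihilator, $\cL/t\cL$ is $(n+1)$-pure and relative holonomic over $\bC[\bs]$ by Lemma \ref{lemMPT0}(ii), so every irreducible component of $V(\Ann_{\bC[\bs]}(\cL/t\cL))$ is a codimension-one subvariety of $\bC^r$ appearing as some $S_w$ in the decomposition $\Chr(\cL/t\cL) = \bigcup_w \Lambda_w \times S_w$. The main obstacle will be showing that these $S_w$ are $\bZ$-translates of the graded hyperplanes in a way that preserves the $\bN$-coefficient form of the linear part; I expect to handle this through a $V$-filtration or $b$-function argument along each Lagrangian stratum $\Lambda_w$, in the spirit of Maisonobe \cite[R\'esultat~2]{M}.
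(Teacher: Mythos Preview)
Parts (i) and (ii) of your proposal match the paper's proof: the sandwich via Theorem~\ref{thm:char} for (i), and the identification $t\cL=\bf^\ba\cL$ followed by the argument of Corollary~\ref{cor:char} for (ii).

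Part (iii) is where your plan diverges and becomes incomplete. You treat the graded annihilator first, via a log-resolution computation of $p_2\bigl(\overline{(T^*_UU)^\sharp}|_{\{\bf^\ba=0\}}\bigr)$, and then leave the ungraded annihilator to a vague ``$V$-filtration or $b$-function argument''. The log-resolution step is shakier than it looks: your equations $y_j\eta_j=\sum_i n_{ij}s_i$ describe $\overline{(T^*_VV)^\sharp}$ in $T^*Y\times\bC^r$, not $\overline{(T^*_UU)^\sharp}$ in $T^*X\times\bC^r$, and comparing their projections to $\bC^r$ is nontrivial (this is exactly what the later Lemma~\ref{logres} addresses, and only as an inclusion). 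More importantly, the ungraded case is the one that carries content, and ``I expect to handle this'' is not a proof.

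The paper proceeds in the opposite order, and more simply. It handles the ungraded annihilator first: the $(n+1)$-purity of $\cL/t\cL$ from Lemma~\ref{lemMPT0} transfers to $\gr^{\textup{rel}}(\cL/t\cL)$ for a suitable good filtration, so $\Chr(\cL/t\cL)$ is pure of dimension $n+r-1$; relative holonomicity together with \cite[Lemma~3.4.1]{BVWZ} then forces $Z(B_{\cL/t\cL})=p_2(\Chr(\cL/t\cL))$ to be pure of codimension one. That each such component is a hyperplane with slopes in $\bN^r$ is then a direct citation of Sabbah~\cite{Sab} and Gyoja~\cite{G}, whose arguments apply with $\cL$ in place of $\sD_X[\bs]\bf^\bs$. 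The graded claim is now immediate: the zero locus of $\Ann_{\bC[\bs]}(\gr^\sharp(\cL/t\cL))$ is contained in the zero locus of the initial form of the product of degree-one polynomials defining $Z(B_{\cL/t\cL})$, which is a product of linear forms $L\cdot\bs$ with $L\in\bN^r$.
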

\begin{proof} (i) Let $\bk\in \bZ^r$ be such that $\sD_X[\bs]\bf^{\bs}\subset \cL\subset\sD_X[\bs]\bf^{\bs-\bk}$. 
By Theorem \ref{thm:char}, we know $\CC^\sharp(\sD_X[\bs]\bf^{\bs-\bk})=\CC^\sharp(\sD_X[\bs]\bf^\bs).$
This implies the claim.

(ii)  Since $t\cL\subset \cL$, it follows that $t\cL=\bf^\ba\cL$. Since $\CC^\sharp(\cL)=\CC^\sharp(\sD_X[\bs]\bf^\bs)$,  applying the same proof as for Corollary \ref{cor:char} for the multiplication map $\bf^\ba:\cL\ra\cL$ gives the claim.

(iii) 
By \cite[Theorem A:IV 4.15]{Bjo}, $\gr^{\textup{rel}}(\cL/t\cL)$ is also $(n+1)$-pure after choosing a suitable good filtration. Then purity in the commutative case gives that $\Chr(\cL/t\cL)$ is pure of dimension $n+r-1$, cf. \cite[Theorem A:IV 3.7]{Bjo}. Since $\cL/t\cL$ is a subquotient of $\sD_X[\bs]\bf^{\bs-l\bbe_i}$, it is relative holonomic over $\bC[\bs]$ by Theorem \ref{thrmGenM6} and \cite[Lemma 3.2.4]{BVWZ}. Thus we can apply \cite[Lemma 3.4.1]{BVWZ} to obtain that $$Z(B_{\cL/t\cL})=p_2(\Chr(\cL/t\cL))$$ is pure of codimension 1. Now we know by \cite{Sab, G}, with $\cL$ instead of $\sD_X[\bs]\bf^\bs$, that each hyperplane in $Z(B_{\cL/t\cL})$ has slopes in $\bN^r$ as required. Since the zero locus of $\Ann_{\bC[\bs]}(\gr^\sharp(\cL/t\cL))$ is contained in the zero locus of the initial term of the product of polynomials of degree 1 defining $Z(B_{\cL/t\cL})$, the last claim follows as well.
\end{proof}

\begin{definition} We denote by $$\cH_\ba, \cH_\ba^\sharp, \cH_\ba(F), \cH_\ba^\sharp(F),$$
the sets of primitive slopes $L=(l_1,\ldots,l_r)\in\bN^r$ of hyperplanes of type $L\cdot \bs+b=0$ appearing as irreducible components of the support over $\bC[\bs]$ of
$$
\cL/t\cL, \gr^\sharp(\cL/t\cL), \dfrac{\sD_X[\bs]\bf^\bs}{\sD_X[\bs]\bf^{\bs+\ba}}, \gr^\sharp\left(\dfrac{\sD_X[\bs]\bf^\bs}{\sD_X[\bs]\bf^{\bs+\ba}}\right),
$$ 
respectively. Note that all four slope sets do not change if $\ba$ is replaced by a multiple $l\ba$ with $l\in\bZ_{>0}$.
\end{definition}

The following extends slightly \cite[R\'esultat 6]{M}:

\begin{prop}\label{prop:islope} With the same notation as in Lemma \ref{lemMPT0},
$$
\cH_\ba^\sharp(F) = \cH_\ba^\sharp=\cH_\ba =  \cH_\ba(F).
$$
\end{prop}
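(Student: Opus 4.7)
I would decompose the chain into three links: the transfer equalities (I) $\cH_\ba^\sharp(F)=\cH_\ba^\sharp$ and (III) $\cH_\ba=\cH_\ba(F)$ comparing $M=\sD_X[\bs]\bf^{\bs}/\sD_X[\bs]\bf^{\bs+\ba}$ with $\cL/t\cL$ at the $\sharp$-graded and ungraded levels respectively, and the key middle step (II) $\cH_\ba^\sharp=\cH_\ba$ comparing $\cL/t\cL$ with its own $\sharp$-associated graded. Links (I) and (III) are formal consequences of the input already in hand; the real work is in one direction of (II).

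For (I), Lemma \ref{lemMPT}(ii) gives $\CC^\sharp(M)=\CC^\sharp(\cL/t\cL)$, and applying $p_2$ together with Lemma \ref{lem:pj} identifies $\supp_{\bC[\bs]}(\gr^\sharp M)=\supp_{\bC[\bs]}(\gr^\sharp(\cL/t\cL))$. For (III), I would consider the natural morphism $\phi\colon M\to\cL/t\cL$ induced by the inclusions $\sD_X[\bs]\bf^\bs\hookrightarrow\cL$ and $\sD_X[\bs]\bf^{\bs+\ba}\hookrightarrow t\cL$. Its cokernel is a quotient of $\cL/\sD_X[\bs]\bf^\bs$, which has grade $\ge n+2$ by maximality of $\cL$; since $\dim\Ch^\sharp$ is additive in short exact sequences, $j(\operatorname{coker}\phi)\ge n+2$ as well. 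Applying additivity of $\CC^\sharp$ to $0\to\ker\phi\to M\to\operatorname{im}\phi\to 0$ and $0\to\operatorname{im}\phi\to\cL/t\cL\to\operatorname{coker}\phi\to 0$ and combining with (I) forces $\CC^\sharp(\ker\phi)=\CC^\sharp(\operatorname{coker}\phi)$, so $j(\ker\phi)\ge n+2$ too. Because relative holonomic modules $N$ satisfy $\codim_{\bC^r}\supp_{\bC[\bs]}(N)=j(N)-n$ via the Lagrangian product structure of $\Chr(N)$ (as in \cite[Lemma 3.4.1]{BVWZ}), both $\ker\phi$ and $\operatorname{coker}\phi$ have $\bC[\bs]$-support of codimension $\ge 2$, so the codimension-one parts of $\supp_{\bC[\bs]}(M)$ and $\supp_{\bC[\bs]}(\cL/t\cL)$ coincide.

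The main obstacle is (II). The inclusion $\cH_\ba^\sharp\subseteq\cH_\ba$ is already present in the proof of Lemma \ref{lemMPT}(iii): writing the defining polynomial of $\supp_{\bC[\bs]}(\cL/t\cL)$ as $\prod_k(L_k\cdot\bs+b_k)$, its leading part in $\bs$ is $\prod_k(L_k\cdot\bs)$, which annihilates $\gr^\sharp(\cL/t\cL)$. The reverse inclusion $\cH_\ba\subseteq\cH_\ba^\sharp$ is where Maisonobe's R\'esultat 6 must be extended to general $\ba$. Both $\cL/t\cL$ and $\gr^\sharp(\cL/t\cL)$ are $(n+1)$-pure by Lemma \ref{lemMPT0}(ii) and \cite[A.IV 4.15]{Bjo}, so both $\bC[\bs]$-supports are purely of codimension one and carry well-defined multiplicity cycles. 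I would argue by Rees-algebra degeneration: the Rees module $\tilde N=\bigoplus_p F^\sharp_p(\cL/t\cL)\hbar^p$ is flat over $\bC[\hbar]$ with $\tilde N/(\hbar-1)\cong\cL/t\cL$ and $\tilde N/(\hbar)\cong\gr^\sharp(\cL/t\cL)$, and each hyperplane $L\cdot\bs+b=0$ in $\supp_{\bC[\bs]}(\cL/t\cL)$ lifts to a homogenized family in $\bC^r\times\bC_\hbar$ whose specialization at $\hbar=0$ is the slope hyperplane $L\cdot\tilde\bs=0$. Iterating Lemma \ref{lem:rspecialization} as in the proof of Corollary \ref{thm:char1} then preserves the multiplicity of each codimension-one component through this specialization, so every slope in $\cH_\ba$ must appear in $\cH_\ba^\sharp$. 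Purity is essential here to prevent codimension-one components from collapsing in the degeneration.
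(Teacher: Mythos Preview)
Your link (I) is exactly the paper's argument. For (III) your route is correct but different: the paper does not analyze the map $\phi\colon M\to\cL/t\cL$. Instead it uses the sandwich $t^l\cL\subset\sD_X[\bs]\bf^\bs\subset\cL\subset\sD_X[\bs]\bf^{\bs-l\ba}$ for suitable $l$ to write down two support inclusions (the displayed relations (\ref{eqSupp1}) and (\ref{eqSupp2})), and then reads off $\cH_\ba\subset\cH_\ba(F)$ from one and $\cH_\ba(F)\subset\cH_\ba$ from the other, using only that translates of $t^j\cL/t^{j+1}\cL$ share the same slope set. Your grade-number argument for $\ker\phi$ and $\operatorname{coker}\phi$ works too and is more structural; the paper's version is more elementary and has the side benefit that (\ref{eqSupp1})--(\ref{eqSupp2}) are reused later in the proof of Proposition \ref{propDMA3}.

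The genuine gap is in your reverse inclusion $\cH_\ba\subseteq\cH_\ba^\sharp$ in (II). Lemma \ref{lem:rspecialization} is stated for a \emph{coherent} $\sO_\sX$-module, but your Rees module $\tilde N$ is only finitely generated over the noncommutative Rees ring of $\sD_X[\bs]$; it is not coherent over $\bC[\bs,\hbar]$, so you cannot invoke the lemma directly on the $\bC[\bs]$-support. One can try to repair this by first passing to $\gr^{\textup{rel}}$ to land on $T^*X\times\bC^r$ and then running a one-parameter degeneration in the $\bs$-grading, but this interaction of two filtrations is exactly the subtle point and is not handled by the reference to Corollary \ref{thm:char1}. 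The paper sidesteps the issue with a direct induction on the degree of a generator of $B_{\cL/t\cL}$: writing $b_\cL=\prod_{L\in\cH_\ba}(L\cdot\bs+r_L)$ and taking $k$ minimal with $b_\cL^k\in B_{\cL/t\cL}$, one peels off one linear factor at a time via
\[0\to \frac{b_\cL^k}{L\cdot\bs+r_L}\cdot(\cL/t\cL)\to \cL/t\cL\to \cQ\to 0,\]
reducing to the base case of a single slope, where purity of $\Ch^\sharp(\cL/t\cL)$ in dimension $n+r-1$ forces that slope to occur in $\cH_\ba^\sharp$.
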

\begin{proof} The first equality is by Lemma \ref{lemMPT} (ii).

Since $t^{l}\cL\subset\sD_X[\bs]\bf^{\bs}\subset\cL\subset \sD_X[\bs]\bf^{\bs-l\ba}$ for some $l\in \bZ_{\ge 0}$, we have 
\be\label{eqSupp1}
\supp_{\bC[\bs]}\left(\dfrac{\cL}{t^l \cL}\right)\subset \supp_{\bC[\bs]}\left(\dfrac{\sD_X[\bs]\bf^{\bs-l\ba}}{\sD_X[\bs]\bf^{\bs}}\right)\cup \supp_{\bC[\bs]}\left(\dfrac{\sD_X[\bs]\bf^{\bs}}{\sD_X[\bs]\bf^{\bs+l\ba}}\right)
\ee
and 
\be\label{eqSupp2}\supp_{\bC[\bs]}\left(\dfrac{\sD_X[\bs]\bf^{\bs-l\ba}}{\sD_X[\bs]\bf^{\bs}}\right)\subset \supp_{\bC[\bs]}\left(\dfrac{t^{-l}\cL}{\cL}\right)\cup \supp_{\bC[\bs]}\left(\dfrac{\cL}{t^{l}\cL}\right).\ee
Since $\bs\cdot t^l=t^l\cdot \bs-l\ba$ for every $l\in\bZ$, both supports on the right-hand side of (\ref{eqSupp1}) have the same slope set $\cH_{l\ba}(F)=\cH_\ba(F)$. Hence $\cH_\ba$, the slope set of $\cL/t^l\cL$, is included in $\cH_\ba(F)$. From (\ref{eqSupp1}), we get now the third claimed equality $\cH_\ba=\cH_\ba(F)$.

It remains to prove the second equality.

If $\mathfrak{p}$ is an associated prime of $\gr^{\textup{rel}}({\cL}/{t\cL})$, then 
\[\mathfrak{p}=\mathfrak{p}_X\otimes \mathfrak{p}_{\bC^r}\]
where $\mathfrak{p}_{\bC^r}$ is an ideal generated by some hyperplane in $\bC^r$ with   slope $L\in \cH_\ba$, and $\mathfrak{p}_X$ is the prime ideal in $T^*X$ of an irreducible Lagrangian subvariety, by relative holonomicity of $\cL/t\cL$ from Lemma \ref{lemMPT0}. Then we define 
\[in(\mathfrak{p})\coloneqq \mathfrak{p}_X\otimes in(\mathfrak{p}_{\bC^r})\]
where $in(\mathfrak{p}_{\bC^r})$ is the initial homogenous ideal generated by $L\cdot\bs$.  We claim that the set of associated primes of $\gr^\sharp({\cL}/{t_i\cL})$ is the union of all $in(\mathfrak{p})$.
Together with Lemma \ref{lem:pj}, the claim implies $\cH_\ba=\cH^\sharp_\ba$.

Now we prove the last claim. We assume $b_\cL$ is the generator of the radical ideal of $B_{\cL/t\cL}$. Since the support of $Z(B_{\cL/t\cL})$ is pure of codimension 1, we know that 
\[b_\cL=\prod_{L\in \cH_\ba} (L\cdot \bs+r_L)\]
for some $r_L$. If $b_\cL$ is of degree 1, then the claim follows since $\Ch^\sharp(\cL/t_i\cL)$ is pure of dimension $n+r-1$. In general, we pick the smallest $k$ so that $b^k_\cL\in B_{\cL/t\cL}$. Picking a slope $L\in \cH_\ba$, we consider the short exact sequence 
\[0\ra \frac{b^k_{\cL}}{L\cdot \bs+r_L}\cdot \cL/t\cL\ra \cL/t\cL\ra \cQ\to 0,\]
where $\cQ$ is the quotient module. Doing induction on the degree of $b^k_\cL$, the proof of the claim is done. 
\end{proof}

\begin{prop}\label{propDMA3}
Let $\ba\in\bN^r$ such that $\bf^\ba$ is not invertible. Let 
$$
M= \frac{\sD_X[\bs]\bf^{\bs}}{\sD_X[\bs]\bf^{\bs+\ba}}.
$$
Then every irreducible component of $Z(B_M)$ of codimension $>1$ can be translated by an element of $\bZ^r$ into a component of codimension one. 
\end{prop}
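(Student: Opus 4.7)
The plan is to adapt Maisonobe's argument for $\ba=\mathbf{1}$ from \cite[R\'esultat 2]{M} to general $\ba$, by leveraging the maximal tame pure extension $\cL\supset \sD_X[\bs]\bf^\bs$ of Subsection \ref{subMPTE} together with the shift operator $t:\bs\mapsto \bs+\ba$. The core goal is to compare $M$ with the $(n+1)$-pure module $\cL/t\cL$, whose support is pure of codimension one by Lemma \ref{lemMPT} (iii), so that every embedded (codim-$>1$) component of $Z(B_M)$ is forced to match a $\bZ^r$-translate of a codim-1 hyperplane.

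For each $l\geq 1$, set $M_l=\sD_X[\bs]\bf^\bs/\sD_X[\bs]\bf^{\bs+l\ba}$, so $M=M_1$. Iterating the short exact sequences $0\to \sD_X[\bs]\bf^{\bs+(l-1)\ba}/\sD_X[\bs]\bf^{\bs+l\ba}\to M_l\to M_{l-1}\to 0$, and identifying the leftmost term with $M$ up to the substitution $\bs\mapsto\bs+(l-1)\ba$, yields $Z(B_{M_l})=\bigcup_{j=0}^{l-1}(Z(B_M)-j\ba)$. Analogously, the filtration $\cL\supset t\cL\supset\cdots\supset t^l\cL$ gives $Z(B_{\cL/t^l\cL})=\bigcup_{j=0}^{l-1}(Z(B_{\cL/t\cL})+j\ba)$, still pure of codimension $1$.

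The key input is a support equation obtained from the two short exact sequences
\begin{align*}
0\to M_l\to \cL/\sD_X[\bs]\bf^{\bs+l\ba}\to \cL/\sD_X[\bs]\bf^\bs\to 0,\\
0\to t^l(\cL/\sD_X[\bs]\bf^\bs)\to \cL/\sD_X[\bs]\bf^{\bs+l\ba}\to \cL/t^l\cL\to 0,
\end{align*}
which share the middle term. Setting $Y=Z(B_{\cL/\sD_X[\bs]\bf^\bs})$, the maximality of $\cL$ forces $\cL/\sD_X[\bs]\bf^\bs$ to have grade $\geq n+2$, whence $\codim Y\geq 2$, while the support of $t^l(\cL/\sD_X[\bs]\bf^\bs)$ is $Y+l\ba$. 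Equating the two supports gives
\[ Z(B_{M_l})\cup Y \;=\; Z(B_{\cL/t^l\cL})\cup (Y+l\ba). \]
Given a codim-$>1$ irreducible component $W\subset Z(B_M)$, for every $l\geq 1$ the inclusion $W\subset Z(B_{M_l})$ forces a dichotomy: either \emph{(A)} $W$ lies strictly inside a codim-$1$ component $H'$ of $Z(B_{\cL/t^l\cL})$, in which case $H'\subset Z(B_{M_l})$ by codimension reasons and irreducibility identifies $H'=Z_1-j\ba$ for some codim-$1$ component $Z_1\subset Z(B_M)$ and $j\in\{0,\ldots,l-1\}$, giving $W+j\ba\subset Z_1$ and finishing the proof; or \emph{(B)} $W\subset Y+l\ba$, i.e.\ $W-l\ba\subset Y$.

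The main obstacle is excluding persistent Case (B). If $W\subset Y+l\ba$ for all $l\geq 1$, pigeonhole on the finitely many irreducible components of $Y$ yields an irreducible $Y_0\subset Y$ with $Y_0\supset \overline{\bigcup_k (W-l_k\ba)}$; this Zariski closure equals $W+\bC\ba$ unless $W$ is already $\bC\ba$-invariant. Hence either $W\subset Y$ (when $\bC\ba$-invariant), or $\dim Y_0\geq\dim W+1$, which combined with $\dim Y_0\leq r-2$ forces $\codim W\geq 3$ and enables a descent. One then applies the same construction (Steps 1--3) to the module $\cL/\sD_X[\bs]\bf^\bs$ in place of $\sD_X[\bs]\bf^\bs$, using its own maximal tame pure extension; the cascade strictly lowers dimension at each stage and terminates in at most $r-1$ steps. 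Closing this descent rigorously --- verifying that the relevant grade and purity inequalities propagate through the iterated extensions, and handling the partial effectiveness of $t$ when $\ba$ has zero coordinates --- is the technical heart of Maisonobe's original argument that must be carefully adapted to arbitrary $\ba$.
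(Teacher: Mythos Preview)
Your overall strategy---comparing $M$ to the $(n+1)$-pure module $\cL/t\cL$ via the $t$-action---is exactly right and matches the paper. The computations of $Z(B_{M_l})$ and $Z(B_{\cL/t^l\cL})$ as unions of translates are also correct. The difficulty is entirely in your Case~(B): the proposed descent through iterated maximal tame pure extensions is not carried out, and you yourself flag it as the unresolved ``technical heart''. As written, the proof is incomplete precisely there, and it is not clear the descent terminates as you outline (in particular the $\bC\ba$-invariant subcase $W\subset Y$ is left dangling).

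The paper avoids Case~(B) altogether by choosing a different pair of inclusions. Rather than comparing two decompositions of $\cL/\sD_X[\bs]\bf^{\bs+l\ba}$, which produces the error term $Y=Z(B_{\cL/\sD_X[\bs]\bf^\bs})$, the paper uses the chain
\[
t^l\cL\ \subset\ \sD_X[\bs]\bf^{\bs}\ \subset\ \cL\ \subset\ \sD_X[\bs]\bf^{\bs-l\ba}\ \subset\ t^{-l}\cL
\]
(valid for $l$ large enough that $\cL\subset\sD_X[\bs]\bf^{\bs-l\ba}$). This yields two \emph{pure inclusions} of supports with no error term: (\ref{eqSupp2}) gives $\supp_{\bC[\bs]}(\sD_X[\bs]\bf^{\bs-l\ba}/\sD_X[\bs]\bf^{\bs})\subset \supp_{\bC[\bs]}(t^{-l}\cL/\cL)\cup\supp_{\bC[\bs]}(\cL/t^l\cL)$, so every component of $Z(B_M)$ lands in a $\bZ^r$-translate of some codimension-one hyperplane $H\subset Z(B_{\cL/t\cL})$; and (\ref{eqSupp1}) gives $\supp_{\bC[\bs]}(\cL/t^l\cL)\subset$ translates of $Z(B_M)$, so $H$ itself sits in a translate of a codimension-one component of $Z(B_M)$. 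Chaining the two finishes the proof in one line, with no dichotomy and no descent. The fix to your argument is thus to replace your two short exact sequences by this longer filtration chain; your Case~(A) reasoning then applies unconditionally.
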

\begin{proof}
First note that $\bs\cdot t=t\cdot \bs-\ba$ implies that
$$\supp_{\bC[\bs]}\left(\frac{t\cL}{t^2\cL}\right)=T\left(\supp_{\bC[\bs]}\left(\frac{\cL}{t\cL}\right)\right),$$
where $T:\mathbb{C}^r\to \mathbb{C}^r$ is translation by $\ba$. Then the exact sequence
$$0\to \frac{t\cL}{t^2\cL}\to \frac{\cL}{t^2\cL}\to \frac{\cL}{t\cL}\to 0$$
implies that 
$$\supp_{\bC[\bs]}\left(\frac{\cL}{t^2\cL}\right)= \supp_{\bC[\bs]}\left(\frac{\cL}{t\cL}\right)\cup T\left(\supp_{\bC[\bs]}\left(\frac{\cL}{t\cL}\right)\right).$$
Iterating this argument we obtain that
$$\supp_{\bC[\bs]}\left(\frac{\cL}{t^l\cL}\right)=\bigcup_{j=0}^{l-1}T^j\left((\supp_{\bC[\bs]}\left(\frac{\cL}{t\cL}\right)\right)$$ and $$\supp_{\bC[\bs]}\left(\frac{t^{-l}\cL}{\cL}\right)= \bigcup_{j=-l+1}^{0}T^j\left(\supp_{\bC[\bs]}\left(\frac{\cL}{t\cL}\right)\right).$$
With this description of the supports in mind, (\ref{eqSupp2}) implies that all components of $Z(B_M)$ are contained in translates of the some of the components of $Z(B_{\cL/t\cL})$. On the other hand, (\ref{eqSupp1}) implies that all of the components of $Z(B_{\cL/t\cL})$ are contained in translates of the codimension $1$ components of $Z(B_M)$. These two statements combined imply the claim.
\end{proof}

\begin{lemma}\label{logres}
Let $\mu:Y\to X$ be a log-resolution of the pair (X,D) that is an isomorphism above $U=X\setminus D$. Let $g_i=f_i\circ \mu$ and $G=(g_1,g_2,\dots, g_r)$. Let $\ba\in\bN^r$ such that $\bf^\ba$ is not invertible. Then 
$$\cH_\ba(F)\subset\cH_\ba(G).$$
\end{lemma}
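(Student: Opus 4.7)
The plan is to compare the $\sharp$-characteristic cycles of $M_F$ on $X$ and $M_G$ on $Y$ via the proper birational morphism $\mu$. By Proposition \ref{prop:islope} applied to both $F$ and $G$, it suffices to prove $\cH_\ba^\sharp(F)\subset \cH_\ba^\sharp(G)$, i.e.\ that every slope $L$ of a codimension-one hyperplane component of $p_2(\supp \gr^\sharp M_F)$ is also such a slope for $M_G$. By Corollary \ref{cor:char} combined with Lemma \ref{lem:pj} these two projections are
$$p_2\bigl(\overline{(T^*_UU)^\sharp}|_{\{\bf^\ba=0\}}\bigr)\quad\text{and}\quad p_2\bigl(\overline{(T^*_VV)^\sharp}|_{\{\mathbf{g}^\ba=0\}}\bigr),$$
where $V=\mu^{-1}(U)$ and $\mathbf{g}=(g_1,\ldots,g_r)$. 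Since $\mu\colon V\xrightarrow{\sim} U$ identifies $(T^*_UU)^\sharp$ with $(T^*_VV)^\sharp$ via $d\log g_i=\mu^*d\log f_i$, the two ambient varieties agree over $V=U$; only their closures in $T^*X\times \bC^r$ and $T^*Y\times \bC^r$ differ.

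Fix $L\in \cH_\ba(F)$ realized by a hyperplane $H\colon L\cdot \bs+c=0$, and take a component $W$ of $\overline{(T^*_UU)^\sharp}|_{\{\bf^\ba=0\}}$ with $p_2(W)=H$ together with a generic point $(x_0,\xi_0,\bal_0)\in W$; so $x_0\in D$ and $\bal_0$ is generic on $H$. Realize $(x_0,\xi_0,\bal_0)$ as a limit of points $\bigl(\gamma(t),\sum_i\al_i(t)\,d\log f_i|_{\gamma(t)},\bal(t)\bigr)\in (T^*_UU)^\sharp$ with $\gamma(t)\in U$ and $\bal(t)\to\bal_0$. Lifting via $\mu^{-1}$ yields an arc $\tilde\gamma(t)\in V$; by properness of $\mu$, after refinement one has $\tilde\gamma(t)\to y_0\in\mu^{-1}(x_0)$, and the corresponding points in $(T^*_VV)^\sharp$ converge to some $(y_0,\xi_0',\bal_0)\in\overline{(T^*_VV)^\sharp}$. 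Since $\mathbf{g}^\ba=\mu^*\bf^\ba$, the condition $\bf^\ba(x_0)=0$ forces $\mathbf{g}^\ba(y_0)=0$, so the limit lies in $\overline{(T^*_VV)^\sharp}|_{\{\mathbf{g}^\ba=0\}}$. Let $W'$ be an irreducible component containing it; by Propositions \ref{propDMA1} and \ref{propGeN}, $Z(B_{M_G})$ is pure of codimension one, so $p_2(W')$ is a hyperplane. Since it contains the generic point $\bal_0$ of $H$, we must have $p_2(W')=H$, yielding $L\in\cH_\ba(G)$.

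The main obstacle is the convergence in the limit step: the forms $\sum_i\al_i(t)\,d\log g_i|_{\tilde\gamma(t)}$ can develop poles along the exceptional divisors of $\mu$, so the covector part need not converge in the affine fiber $T^*_{y_0}Y$ without some rescaling. The cleanest way to handle this is to work intrinsically with the closure $\overline{(T^*_VV)^\sharp}\subset T^*Y\times \bC^r$, which is irreducible of dimension $n+r$ (cf.\ the proof of Corollary \ref{thm:char1}), and invoke the Lagrangian correspondence $T^*Y\leftarrow Y\times_X T^*X\to T^*X$: by properness and birationality of $\mu$, this correspondence maps $\overline{(T^*_VV)^\sharp}$ surjectively onto $\overline{(T^*_UU)^\sharp}$ over a dense open of $Y$, providing the needed lift of generic points. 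This step is the only delicate point; the remaining structural ingredients are already in place from Propositions \ref{propDMA1}, \ref{prop:islope}, \ref{propGeN} and Corollary \ref{cor:char}.
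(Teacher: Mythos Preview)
Your approach is essentially the paper's own: use the sequential description of $\Ch^\sharp(M)$ furnished by Corollary~\ref{cor:char} and Lemma~\ref{lem:pj}, lift a limiting sequence through the proper map $\mu$, and conclude via Proposition~\ref{prop:islope}. The structure is correct; what is off is your handling of the convergence step and one wrong citation.

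On convergence: your worry about poles of $d\log g_i$ is a red herring. You are not evaluating the individual $d\log g_i$ at $y_0$; you are asking whether $\sum_i \al_i(t)\, d\log g_i(\tilde\gamma(t))$ converges as $t\to 0$. But this form is precisely $\mu^{*}_{\tilde\gamma(t)}\bigl(\sum_i \al_i(t)\, d\log f_i(\gamma(t))\bigr)$, and the pullback $\mu^{*}_y\colon T^{*}_{\mu(y)}X\to T^{*}_y Y$ is, in local coordinates, multiplication by the Jacobian matrix $Jac_\mu(y)$, which is a \emph{regular} function of $y$ on all of $Y$. Since the covector on $X$ converges (your hypothesis) and $Jac_\mu(\tilde\gamma(t))\to Jac_\mu(y_0)$, the product converges. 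This is exactly the content of the paper's computation with $M(\mathbf{x})=Jac_\mu(\mu^{-1}(\mathbf{x}))$, and it obviates any ``rescaling'' or abstract detour through the Lagrangian correspondence. (Your correspondence idea is not wrong---the map $\rho\colon Y\times_X T^{*}X\to T^{*}Y$ \emph{is} this Jacobian multiplication---but you never extract from it the one fact you need, namely that $\rho$ is a morphism and hence continuous.)

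On the final step: the claim ``by Propositions~\ref{propDMA1} and~\ref{propGeN}, $Z(B_{M_G})$ is pure of codimension one'' is false---that is precisely what Theorem~\ref{thmMais}(iii) does \emph{not} assert---and those propositions do not say it. What you actually need is that $p_2\bigl(\Ch^\sharp M_G\bigr)=H_\ba(G)$ is a union of linear hyperplanes through the origin, which follows from Lemma~\ref{lemMPT}(iii) (transferred via Lemma~\ref{lemMPT}(ii) and Proposition~\ref{prop:islope}). With that in hand, once $\bal_0\in H_\ba(G)$ and $\bal_0$ is generic on the hyperplane $H\subset H_\ba(F)$, you get $H\subset H_\ba(G)$ as desired. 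Alternatively (and more simply, as the paper does), run the sequential argument for \emph{every} $\bal\in H_\ba(F)$, not just generic ones, and conclude $H_\ba(F)\subset H_\ba(G)$ directly.
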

\begin{proof}  We write $H_\ba(F)$ for the union of all the hyperplanes defined by the slopes in $\cH_\ba(F)$. Clearly it is enough to show that $H_\ba(F)\subset H_\ba(G)$.

We have shown above that $H_\ba(F)$ equals the reduced support over $\bC[\bs]$ of
$$\gr^\sharp\left(\dfrac{\cL}{t\cL}\right)\quad\text{and}\quad\gr^\sharp\left(\dfrac{\sD_X[\bs]\bf^\bs}{\sD_X[\bs]\bf^{\bs+\ba}}\right).$$

Let $(\mathbf{x}_j,{\bal}_j)=(x_{1,j},\dots,x_{n,j},\al_{1,j},\dots,\al_{r,j})$ with $j\ge 1$ be a sequence of points in $U\times \bC^r$ such that 
\begin{enumerate}
\item $\lim \mathbf{x}_j$ exists and it is a point of $\{\bf^\ba=0\}$, and
\item $\lim \left(\mathbf{x}_j, (d\log \bf^{\bal_j})(\mathbf{x}_j),\mathbf{\bal}_j \right) \text{ converges in }T^*X\times\bC^r.$
\end{enumerate}
By Lemma \ref{lem:pj} and Corollary \ref{cor:char}, we have for any such sequence that $\lim \bal_j\in H_i(F)$, and conversely, for any $\bal\in H_i(F)$ there exists such a sequence with $\lim \bal_j=\bal$. 

Write $$\omega_j=\sum_{l=1}^r \al_{l,j}\cdot d\log f_l=d\log \bf^{\bal_j}.$$
 
 Since $\mu$ is an isomorphism over $U$ and a proper map, there is a subsequence $\mathbf{x}_{j_k}$ of $\mathbf{x}_j$, such that $\mu^{-1}(\mathbf{x}_{j_k})$ converges on $Y$. Replace $\mathbf{x}_j$  by this subsequence, and $\bal_j$ by the corresponding subsequence consisting of the $\bal_{j_k}$. Clearly $(\mathbf{x}_j,\bal_j)$ still satisfies conditions $(1)$ and $(2)$ above. Denote $y=\lim \mu^{-1}(\mathbf{x}_j)$. Clearly $\mathbf{g}^\ba(y)=0$. 
 
Let $D'=\mu^{-1}(D)$. Choose coordinates on a small open $V$ around $y$ and trivializations of the cotangent bundles fitting in a commutative diagram of isomorphisms:
\[
\begin{tikzcd}
T^*(\mu(V\setminus D'))\arrow[r,"\mu^*"]\arrow[d] & T^*(V\setminus D')\arrow[d]\\
\mu(V\setminus D')\times \mathbb{C}^n\arrow{r} & (V\setminus D')\times \mathbb{C}^n.
\end{tikzcd}
\] 
For $\omega_j(\mathbf{x}_j)\in T^*(\mu(V\setminus D'))$, denote by $(\mathbf{x}_j,p_j)$ the corresponding point in $\mu(V\setminus D')\times\mathbb{C}^n$. 
Then under this diagram,
\[
\xymatrix{
\omega_j(\mathbf{x}_j) \ar@{|->}[r] \ar@{|->}[d]& \mu^*_{\mu^{-1}(\mathbf{x}_j)}\omega_j(\mathbf{x}_j)\ar@{|->}[d]\\
(\mathbf{x}_j,p_j)\ar@{|->}[r] & (\mu^{-1}(\mathbf{x}_j),M(\mathbf{x}_j)p_j)
}
\]
where
$$M(\mathbf{x})=\begin{pmatrix}
\frac{\partial \mu_1}{\partial y_1}|_{\mu^{-1}(\mathbf{x})} & \cdots & \frac{\partial \mu_n}{\partial y_1}|_{\mu^{-1}(\mathbf{x})}\\
\vdots & \ddots & \vdots\\
\frac{\partial \mu_1}{\partial y_n}|_{\mu^{-1}(\mathbf{x})} & \cdots & \frac{\partial \mu_n}{\partial y_n}|_{\mu^{-1}(\mathbf{x})}\\
\end{pmatrix}=Jac_\mu(\mu^{-1}(\mathbf{x})).$$
Then
$$\lim_{j\to \infty}(\mu^{-1}(\mathbf{x}_j),M(\mathbf{x}_j)p_j)=(y,Jac_{\mu}(y)\lim_{j\to \infty}p_j),$$
which shows that this limit exists, since $\lim_{j\to \infty}p_j$ exists by condition (2) above. Combining this with the fact that
$$\mu^*_{\mu^{-1}(\mathbf{x}_j)}\omega_j(\mathbf{x}_j)=\sum_{l=1}^r \al_{l,j}\cdot (d\log g_l)(\mu^{-1}(\mathbf{x_j})),$$
it follows that the sequence $(\mu^{-1}(\mathbf{x}_j),\bal_j)$ satisfies:
 $$\lim_{j\to \infty}\left(\mu^{-1}(\mathbf{x}_j), \sum_{l=1}^r s_{l,j}d\log(g_l)(\mu^{-1}(\mathbf{x_j})),\bal_j \right) \text{ converges in }T^*Y\times\bC^r.$$
which concludes the proof.
\end{proof}

\begin{prop}\label{propDMA}
Let $\ba\in\bN^r$ such that $\bf^\ba$ is not invertible. Let 
$$
M= \frac{\sD_X[\bs]\bf^{\bs}}{\sD_X[\bs]\bf^{\bs+\ba}}.
$$
Then: 
\begin{enumerate}[(i)]
\item Every irreducible component of $Z(B_M)$ of codimension one is a hyperplane in $\bC^r$ of type $l_1s_1+\ldots+l_rs_r+b=0$ with $l_j\in\bQ_{\ge 0}$ for all $1\le j\le r$, and $b\in\bQ_{>0}$. 
\item Moreover, for each such component there exists $j$ with $a_j\ne 0$ such that $l_j>0$.
\end{enumerate}
\end{prop}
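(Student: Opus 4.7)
The plan is to deduce (i) from Proposition \ref{prop:islope} combined with the Sabbah--Gyoja result, and to deduce (ii) by reducing via Lemma \ref{logres} to a log resolution and computing the characteristic cycle explicitly in the normal crossings case.

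For (i), Proposition \ref{prop:islope} gives $\cH_\ba(F) = \cH_\ba \subset \bN^r$, so every codimension-one component of $Z(B_M)$ is a hyperplane whose primitive slope lies in $\bN^r$; this already yields $l_j \in \bQ_{\ge 0}$. The rationality of $b$ and its positivity $b \in \bQ_{>0}$ follow from \cite{Sab, G}, applied as in Lemma \ref{lemMPT}(iii) to the pure module $\cL/t\cL$, and transfer to $M$ through the containment of supports $Z(B_M) \subset \bigcup_j T^j Z(B_{\cL/t\cL})$ that is implicit in the proof of Proposition \ref{propDMA3}, since integer translations in $\bs$ preserve the form $L \cdot \bs + b = 0$ with $L \in \bN^r$ while shifting $b$ by a non-negative integer.

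For (ii), by Lemma \ref{logres} we have $\cH_\ba(F) \subset \cH_\ba(G)$ for $G = F \circ \mu$ with $\mu\colon Y \to X$ a log resolution of $(X,D)$ that is an isomorphism over $U$. It suffices to show that every $L = (l_1, \ldots, l_r) \in \cH_\ba(G)$ satisfies $L \cdot \ba > 0$, since non-negativity of the $l_j$ and $a_j$ then forces some $j$ with $l_j > 0$ and $a_j \ne 0$. Locally on $Y$ write $g_l = u_l \prod_k y_k^{m_{kl}}$ in SNC coordinates, with $u_l$ units. Corollary \ref{cor:char} applied to $G$ gives
\[
\Ch^\sharp\!\left(\sD_Y[\bs]\mathbf{g}^\bs/\sD_Y[\bs]\mathbf{g}^{\bs+\ba}\right) = \overline{(T^*_{U'}U')^\sharp}\,\big|_{\{\mathbf{g}^\ba = 0\}},
\]
where $U' = Y \setminus \mu^{-1}(D)$. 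In logarithmic symplectic coordinates $(y, \eta, \bal)$ with $\eta_k = y_k\xi_k$, the formula $d\log g_l = \sum_k m_{kl}\,d\log y_k + d\log u_l$ shows that along $\{y_k = 0\}$ the closure $\overline{(T^*_{U'}U')^\sharp}$ enforces $\eta_k = \sum_l m_{kl}\al_l$, the unit term being smooth and dropping out of the logarithmic limit. Since $\{\mathbf{g}^\ba = 0\}$ is supported on $\bigcup_{k\,:\,\sum_l m_{kl} a_l > 0}\{y_k = 0\}$, the $\bC^r$-projection of each contributing stratum produces a hyperplane whose primitive slope is a positive multiple of $(m_{k1}, \ldots, m_{kr})$; the defining constraint $\sum_l m_{kl} a_l > 0$ on such strata is precisely $L \cdot \ba > 0$.

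The main obstacle will be the SNC computation: one has to verify carefully that the units $u_l$ contribute only smooth corrections which vanish in the logarithmic limit along the components of $\mu^{-1}(D)$, and then combine this with Proposition \ref{prop:islope} and Lemma \ref{lem:pj} to translate statements about the characteristic cycle of $\sD_Y[\bs]\mathbf{g}^\bs/\sD_Y[\bs]\mathbf{g}^{\bs+\ba}$ into statements about the slopes of the codimension-one components of its $\bC[\bs]$-support.
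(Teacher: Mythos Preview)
Your plan for (ii) is correct but takes a different route from the paper. Both arguments reduce via Lemma~\ref{logres} to the SNC case on a log resolution. There the paper proceeds by an explicit $b$-function computation: writing $g_j = u_j\prod_k y_k^{l_{j,k}}$ and $L_k=(l_{1,k},\ldots,l_{r,k})$, one checks directly that
\[
\mathbf{u}^{\bs}\,\partial_{y_n}^{L_n\cdot\ba}\cdots\partial_{y_1}^{L_1\cdot\ba}\,\mathbf{u}^{-\bs}\cdot\mathbf{g}^{\bs+\ba}
=\prod_{k\in K}\prod_{j=1}^{L_k\cdot\ba}(L_k\cdot\bs+j)\,\mathbf{g}^{\bs},
\]
where $K=\{k: L_k\cdot\ba>0\}$, so this product lies in the local Bernstein--Sato ideal and hence $\cH_\ba(G)\subset\{L_k:k\in K\}$. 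This is elementary and self-contained. Your approach instead computes $p_2\bigl(\overline{(T^*_{U'}U')^\sharp}|_{\{\mathbf g^\ba=0\}}\bigr)$ in logarithmic coordinates and combines Corollary~\ref{cor:char}, Lemma~\ref{lem:pj}, and Proposition~\ref{prop:islope}; this works and is more geometric, but it leans on the whole $\sharp$-filtration machinery, whereas the paper's one-line operator identity avoids it entirely.

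For (i) there is a genuine slip in your transfer argument. You claim that $b>0$ for $M$ follows from $b>0$ for $\cL/t\cL$ because the translates $T^j$ appearing in the containment $Z(B_M)\subset\bigcup_j T^j Z(B_{\cL/t\cL})$ shift $b$ by a non-negative integer. But the proof of Proposition~\ref{propDMA3} uses $T^j$ for both positive and negative $j$, and $T^j$ sends $L\cdot\bs+b=0$ to $L\cdot\bs+(b-jL\cdot\ba)=0$; for $j>0$ and $L\cdot\ba>0$ the constant term can become non-positive. The fix is simply to drop the detour: Sabbah--Gyoja applies directly to $B_F^{\,\ba}=B_M$ and gives (i) outright, which is exactly what the paper does.
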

\begin{proof}
Part (i) is due to \cite{Sab, G}. The strict positivity in part (ii) is new. Let $\mu:Y\to X$ be a log-resolution of the pair $(X,D)$,  let $g_j=\mu^*f_j$, and let $G=(g_1,\dots,g_r)$. By Lemma \ref{logres}, $\cH_\ba(F)\subset \cH_\ba(G)$. Hence it suffices to prove the statement for the tuple $G$ and locally at a point $y\in Y$ above $x$, since the global Bernstein-Sato ideal is the intersection of the local ones.

Chose coordinates on small open ball $V$ centered at $y$ and write $$g_j=u_jy_1^{l_{j,1}}\ldots y_n^{l_{j,n}}$$ where $u_j$ is a unit on $V$ and  $l_{j,k}\in \mathbb{Z}_{\geq 0}$.  Write $$L_k=(l_{1,k},\ldots, l_{r,k}).$$ Thus $l_{j,k}>0$ if and only if the divisor $\{y_k=0\}$ is  a component of $\{g_j=0\}$ in $V$. Set 
$$K= \bigcup_{j \text{ with } a_j\ne 0}  \{k \mid  l_{j,k}>0 \}.$$ 
By assumption, $K$ is non-empty.  Note that $L_k\cdot \ba>0$ for every $k\in K$ and that locally at $y$
\begin{align*}
\mathbf{u}^\bs\partial_{y_n}^{L_{n}\cdot\ba}\cdots \partial_{y_1}^{L_{1}\cdot\ba}\mathbf{u}^{-\bs}\cdot \mathbf{g}^{\bs+\ba}=\prod_{k\in K}\prod_{j=1}^{L_k\cdot \ba} (L_k\cdot \mathbf{s}+j) \mathbf{g}^\bs.
\end{align*}
This shows that 
$$\prod_{k\in K}\prod_{j=1}^{L_{k}\cdot\ba} (L_k\cdot \mathbf{s}+j)\in B_{N,y}$$
where 
$$N=\frac{\sD_Y[\bs]\mathbf{g}^{\bs}}{\sD_Y[\bs]\mathbf{g}^{\bs+\ba}}.$$
It follows that locally at $y$,
$$\mathcal{H}_\ba(G)\subset \{L_k\mid k\in K\}$$
as claimed. (One can further show that the last inclusion is an equality of sets.)
\end{proof}


\subsection{Proof of Theorem \ref{thmMais}.} This is now covered by Propositions   \ref{propDMA1}, \ref{propDMA3}, \ref{propDMA}.  $\hfill\Box$

\section{Proof of Proposition \ref{lemRH2}}\label{secRH2}

The proof is an adjustment of that of \cite[Theorem 5.4]{WZ}. 

\subsection{} We keep the notation as in Proposition \ref{lemRH2}. We write 
$$\cN_{k,l,\bal}\coloneqq\sD_X[\bs]\bf^{\bs+\bal-\bk+l\bbe_i}\subseteq j_*(\cM_\bla[\bs]\bf^{\bs})$$
and set $$\cN\coloneqq\cN_{0,0,\bal}$$
where 
\[\cM_\bla[\bs]\bf^\bs= \cM_\bla\otimes_{\sO_U}\sO_U[\bs]\bf^\bs\]
with the natural left $\sD_X[\bs]$-module structure.
Then it is enough to prove that 
 $$
\cN_{k,l,\bal}\otimes^L_{\bC[\bs]}{\bC_{\mathbf{0}}}=h_!g_*\cM_\bla 
$$
for $l\gg k\gg 0$, where $\bC_{\mathbf{0}}$ is the residue field at the origin in $\bC^r$.

By \cite[R\'esultat 1]{M}, the $\sD_X[\bs]$-module $\cN_{k,l,\bal}$ is relative holonomic over $\bC[\bs]$ and has grade number $n$. We will need in addition the following lemma, analogous to Proposition \ref{propGeN}, and which can be proved similarly:
\begin{lemma}\label{lm:vanishN}  For each $j>n$, if $\Ext^j_{\sD_X[\bs]}(\cN,\sD_X[\bs])\not=0$, the support of $\Ext^j_{\sD_X[\bs]}(\cN,\sD_X[\bs])$ as a $\bC[\bs]$-module is a proper algebraic subset $\subsetneq\bC^r$.
\end{lemma}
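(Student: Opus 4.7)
The plan is to adapt the generic-flatness argument of Proposition \ref{propGeN} to the grade number $j(\cN)=n$, which holds by \cite[R\'esultat 1]{M} together with the relative holonomicity of $\cN=\sD_X[\bs]\bf^{\bs+\bal}$ over $\bC[\bs]$. Rather than constructing a distinguished open affine $\Spec R\subset\bC^r$ as in Proposition \ref{propGeN}, it is cleaner to localize all the way at the generic point of $\bC^r$ and then transfer the resulting vanishing back to a statement about supports.

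Set $K=\bC(\bs)$ and consider $\cN\otimes_{\bC[\bs]}K$, which is a module over $\sD_X[\bs]\otimes_{\bC[\bs]}K\cong\sD_{X_K}$. Decomposing the relative characteristic variety as $\Chr(\cN)=\bigcup_w\Lambda_w\times S_w$ with $\Lambda_w\subset T^*X$ conic Lagrangian and $S_w\subseteq\bC^r$ irreducible closed, only the components with $S_w=\bC^r$ survive base change to the generic fiber, so $\Ch(\cN\otimes_{\bC[\bs]}K)$ is a finite union of conic Lagrangians in $T^*X_K$. Hence $\cN\otimes_{\bC[\bs]}K$ is classically holonomic over $\sD_{X_K}$ and in particular $n$-Cohen-Macaulay, yielding
$$
\Ext^j_{\sD_{X_K}}(\cN\otimes_{\bC[\bs]}K,\sD_{X_K})=0 \quad\text{for every }j\neq n.
$$
By flatness of $K$ and of $\sD_X[\bs]=\sD_X\otimes_\bC\bC[\bs]$ over $\bC[\bs]$, Ext commutes with this base change, so with $E_j:=\Ext^j_{\sD_X[\bs]}(\cN,\sD_X[\bs])$ we obtain $E_j\otimes_{\bC[\bs]}K=0$ for every $j>n$.

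It remains to deduce $\supp_{\bC[\bs]}(E_j)\subsetneq\bC^r$ from $E_j\otimes_{\bC[\bs]}K=0$. On an affine open $U\subset X$ where $\sD_U[\bs]$ is Noetherian, $\cN|_U$ is finitely generated and hence so is $E_j|_U$; choosing generators $e_1,\ldots,e_m$, the vanishing after inverting $\bC[\bs]\setminus\{0\}$ supplies nonzero $f_i\in\bC[\bs]$ with $f_ie_i=0$, and centrality of $\bC[\bs]$ in $\sD_X[\bs]$ promotes each $f_i$ to an annihilator of the $\sD_U[\bs]$-submodule generated by $e_i$. The product $f=\prod_i f_i\in\bC[\bs]\setminus\{0\}$ then annihilates $E_j|_U$; a finite affine cover of $X$ and one more product yield a nonzero global annihilator of $E_j$ in $\bC[\bs]$, whose vanishing locus contains $\supp_{\bC[\bs]}(E_j)$ and is a proper closed subset of $\bC^r$.

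The main obstacle I expect is the last gluing step --- ensuring that the locally obtained $\bC[\bs]$-annihilators can be combined into a single nonzero global one. In the affine or germ setting used throughout the paper this follows from Noetherianity, and in the general algebraic setting a quasi-compact cover of $X$ suffices. The genuinely new input over Proposition \ref{propGeN} is simply the observation that at the generic point of $\bC^r$ the module $\cN\otimes_{\bC[\bs]}K$ becomes \emph{classically} holonomic, which is immediate from the decomposition of $\Chr(\cN)$.
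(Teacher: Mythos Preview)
Your proposal is correct and follows essentially the same approach that the paper indicates, namely adapting the argument behind Proposition \ref{propGeN} (which in turn is \cite[Lemma 3.5.2]{BVWZ}): use relative holonomicity of $\cN$ to get classical holonomicity, hence $n$-Cohen--Macaulayness, after base change to $\bC(\bs)$, then invoke flat base change for $\Ext$. Your choice to localize directly at the generic point rather than to an open affine $\Spec R$ is a cosmetic simplification, and your worry about the gluing step is unnecessary here since in the paper's setting $X$ is affine (or a germ), so $\sD_X[\bs]$ is already Noetherian and $E_j$ is globally finitely generated.
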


\subsection{}
Let $V=X\setminus {D_i}$, and let $g$ and $h$ be defined as in (\ref{eqGH}), so that $j=h\circ g$. Let $$\sD_X(\bs)=\sD_X[\bs]\otimes_{\bC[\bs]} \bC(\bs).$$
For a holonomic $\sD_X(\bs)$-module $\cM$, we have the functor
\[h_!(\cM|_V)=\D\circ h_*\circ \D(\cM|_V)\]
which is also a holonomic $\sD_X(\bs)$-module since the base field $\bC(\bs)$ is of characteristic zero. Similarly for any other open embedding, such as $g$ and $j$. By using the adjoint pairs $(h^{-1}, h_*)$, we have a natural morphism
\[h_!(\cM|_V)\ra h_*(\cM|_V),\]
whose image is $h_{!*}(\cM|_V)$ by definition. We then have natural morphisms 
\[j_!(\cM|_U)\ra h_!g_*(\cM|_U)\ra h_*g_*(\cM|_U)=j_*(\cM|_U).\]

\subsection{}
Now note that $\cN_{k,l,\bal}\otimes_{\bC[\bs]}\bC(\bs)$ is a holonomic $\sD_X(\bs)$-module, so in particular it is $n$-Cohen-Macaulay. Moreover, the morphism 
\[j_!(\cN_{k,l,\bal}\otimes_{\bC[\bs]} \bC(\bs)|_U)\to j_*(\cN_{k,l,\bal}\otimes_{\bC[\bs]} \bC(\bs)|_U)\]
is an isomorphism, both being isomorphic to $\cN_{k',l',\bal}\otimes_{\bC[\bs]} \bC(\bs)$ for every $k', l'$; see the proof of \cite[Theorem 5.3]{WZ}.  The same argument proves that 
\[h_!g_*(\cN_{k,l,\bal}\otimes_{\bC[\bs]} \bC(\bs)|_U)\to h_*g_*(\cN_{k,l,\bal}\otimes_{\bC[\bs]} \bC(\bs)|_U)\]
is an isomorphism, and both are isomorphic to $\cN_{k',l',\bal}\otimes_{\bC[\bs]} \bC(\bs)$. 

We now take $k_0\gg 0$ such that 
$$\cN_{k_0,0,\bal}\otimes_{\bC[\bs]} \bC[\bs]_m=(\sD_X[\bs]\prod_{i=1}^r f_i^{-k_i}\cdot\bf^{\bs-\bk+\bal})\otimes_{\bC[\bs]} \bC[\bs]_m=j_*(\cN|_U\otimes_{\bC[\bs]} \bC[\bs]_m)$$
for all $k\ge k_0$ and all $k_i\ge 0$ with $i=1,\dots r$, where $m$ is the maximal ideal of $\mathbf{0}\in \bC^r$. Such $k_0$ exists, by the existence of multivariate $b$-functions.

Write
\[\bK_{\bal}\coloneqq\bZ^r\setminus \bigcup_{j>n} \supp_{\bC[\bs]}(\Ext^j_{\sD_X[\bs]}(\cN,\sD_X[\bs])).\] 
By Lemma \ref{lm:vanishN}, we can choose some $k\ge k_0$ and some $k_i\ge 0$ with $i=1,\dots,r$ satisfying
\[-(k+k_1,\dots,k+k_r)\in \bK_{\bal}.\]
That is, $-(k+k_1,\dots,k+k_r)$ is not in $\supp_{\bC[\bs]}(\Ext^j_{\sD_X[\bs]}(\cN,\sD_X[\bs]))$ for each $j>n$. Equivalently, by using substitution, for each $j>n$
\[\mathbf{0}\not\in \supp_{\bC[\bs]}(\Ext^j_{\sD_X[\bs]}(\sD_X[\bs]\prod_{i=1}^r f_i^{-k_i}\cdot\bf^{\bs-\bk+\bal},\sD_X[\bs])).\]
 Therefore, $j_*(\cN|_U\otimes \bC[\bs]_m)$ is $n$-Cohen Macaulay over $$\sD_X[\bs]_m=\sD_X[\bs]\otimes_{\bC[\bs]} {\bC[\bs]_m},$$ as $\Ext$ modules localize. In particular, $g_*(\cN|_U\otimes \bC[\bs]_m)$ is $n$-Cohen Macaulay over $\sD_V[\bs]_m$. 
 
\begin{lemma}
\[\bD(\cN|_U)=\bD(\cM_\bla[\bs]\bf^{\bs}) = \bD(\cM_\bla)[\bs]\bf^{-\bs} =\cM_{-\bla}[\bs]\bf^{-\bs}\simeq \cM_{-\bla}[\bs]\bf^{\bs}\]
as $\sD_U[\bs]$-modules, where the third $\bD$ is the $\sD_U$-dual,   and where the last isomorphism is not canonical being given by the substitution $-\bs$ by $\bs$.
\end{lemma}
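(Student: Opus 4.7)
The plan is to establish the four isomorphisms separately. For the identification $\cN|_U = \cM_\bla[\bs]\bf^\bs$, note that on $U$ the symbol $\bf^\bal$ defines an invertible multivalued section, and the hypothesis $\bla=\exp(-2\pi i\bal)$ gives a canonical isomorphism of flat $\sD_U$-line bundles $\cM_\bla\cong \sO_U\cdot \bf^\bal$; tensoring with $\sO_U[\bs]\bf^\bs$ over $\sO_U$ then produces $\cM_\bla[\bs]\bf^\bs\cong \sO_U[\bs]\bf^{\bs+\bal}=\cN|_U$ as $\sD_U[\bs]$-modules.

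The central step is the duality formula $\bD(\cM_\bla[\bs]\bf^\bs) = \bD(\cM_\bla)[\bs]\bf^{-\bs}$. The module $\cM_\bla[\bs]\bf^\bs$ is locally free of rank one over $\sO_U[\bs]$, equipped with a flat relative connection whose form is $\omega_\bla+\sum_i s_i\,d\log f_i$. For such a flat relative line bundle I would use the relative Spencer resolution
\[0\to \sD_U[\bs]\otimes_{\sO_U[\bs]}\wedge^n T_{U,\mathrm{rel}}\otimes_{\sO_U[\bs]}\cM_\bla[\bs]\bf^\bs\to\cdots\to \sD_U[\bs]\otimes_{\sO_U[\bs]}\cM_\bla[\bs]\bf^\bs\to\cM_\bla[\bs]\bf^\bs\to 0,\]
where $T_{U,\mathrm{rel}}=T_U\otimes_{\bC}\bC[\bs]$ is the relative tangent sheaf. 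Its exactness reduces locally to the exactness of the relative de Rham complex of a flat connection. Applying $\Rhom_{\sD_U[\bs]}(-,\sD_U[\bs])\otimes_{\sO_U}\omega_U^{-1}[n]$ and identifying the resulting complex with the Spencer complex of the $\sO_U[\bs]$-dual line bundle equipped with the opposite connection yields that $\bD(\cM_\bla[\bs]\bf^\bs)$ is concentrated in degree zero and equal to $\cM_\bla^\vee\otimes_{\sO_U}\sO_U[\bs]\bf^{-\bs}=\bD(\cM_\bla)[\bs]\bf^{-\bs}$.

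For the remaining identifications, $\bD(\cM_\bla)=\cM_{-\bla}$ is the standard fact that the $\sD_U$-dual of a flat line bundle is the inverse bundle with the opposite connection, corresponding to inverse monodromy $\bla^{-1}$; the final non-canonical isomorphism $\cM_{-\bla}[\bs]\bf^{-\bs}\simeq \cM_{-\bla}[\bs]\bf^\bs$ is induced by the $\bC$-algebra involution of $\sD_U[\bs]$ that fixes $\sD_U$ and sends each $s_i$ to $-s_i$, which alters the $\bC[\bs]$-structure of the module while providing an abstract isomorphism of $\sD_U[\bs]$-modules. The main obstacle I anticipate is executing the duality step cleanly: verifying exactness of the relative Spencer complex, identifying the dualized complex with the Spencer complex of the dual line bundle while correctly tracking the sign flip of the connection form so that $\bf^\bs$ becomes $\bf^{-\bs}$, and confirming the vanishing of higher $\Ext$ groups so that $\bD$ is indeed concentrated in degree zero in this relative setting.
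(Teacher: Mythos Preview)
Your approach is correct but proceeds differently from the paper. The paper isolates the second isomorphism as a general functorial statement: for \emph{any} coherent left $\sD_U[\bs]$-module $\cM$, one has $\bD(\bf^\bs\cdot\cM)=\bf^{-\bs}\cdot\bD(\cM)$, where $\bf^\bs\cdot(-)$ denotes tensoring over $\sO_U[\bs]$ with the twisting bimodule $\bf^\bs\sO_U[\bs]$. Its proof takes an arbitrary free $\sD_U[\bs]$-resolution of $\cM$, conjugates each differential matrix by $\bf^{\pm\bs}$ to obtain a free resolution of $\bf^\bs\cdot\cM$, and then invokes the identity $[\bf^\bs P(s)\bf^{-\bs}]^{*}=\bf^{-\bs}P(s)^{*}\bf^{\bs}$ for formal adjoints to match the dualized complex with the one computing $\bf^{-\bs}\cdot\bD(\cM)$. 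Your Spencer-resolution argument instead exploits the special structure of $\cM_\bla[\bs]\bf^\bs$ as a rank-one $\sO_U[\bs]$-module with flat relative connection, so that the dual is read off directly as the inverse line bundle with negated connection form. The paper's route is more general and purely algebraic (no rank-one or $\sO$-coherence hypothesis on $\cM$), while yours is more geometric and makes the appearance of $\bf^{-\bs}$ transparent via the connection form; the price is that you must carry out the identification of the dualized Spencer complex with the Spencer complex of the dual bundle, including the $\omega_U^{-1}$ twist and the side-change, which is exactly the bookkeeping you flag as the main obstacle.
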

\begin{proof} Only the second isomorphism needs a proof. We actually prove a slightly more general statement. Let $\bf^\bs \cdot$ be the  functor from the category of coherent left $\sD_U[\bs]$-modules to itself that acts on objects as 
\[ \cM \mapsto \bf^\bs \cdot  \cM := \bf^\bs \sO_U[s] \otimes_{\sO_U[s]}   \cM \]
and acts on a morphism $\varphi:  \cM \to  \cN$ by 
\[ 1 \otimes \varphi: \bf^\bs \sO_U[s] \otimes_{\sO_U[s]}   \cM \to \bf^\bs \sO_U[s] \otimes_{\sO_U[s]}   \cN, \quad \bf^\bs \otimes m \mapsto \bf^\bs \otimes \varphi(m). \]
Then, we claim that
\[ \D( \bf^\bs \cdot  \cM ) = \bf^{-\bs} \cdot \D( \cM).  \]
Then the original statement follows from taking $ \cM = \cM_\bla \otimes_\bC \bC[s]$, and noting that
\[ \D( \cM \otimes_\bC \bC[s]) = \D( \cM) \otimes_\bC \bC[s].\]

Now we prove the claim. Since the statement is local, we may assume $U$ is a coordinate chart, and we abuse notation and let $\sD_U[\bs]$ also denote $ \Gamma(U, \sD_U[\bs])$. Take a free resolution of $ \cM$. 
\[ \ldots \ra \sD_U[\bs]^{m_1} \xto{ d_1}   \sD_U[\bs]^{m_1} \xto{ d_0 }  \sD_U[\bs]^{m_0}  \to  \cM.\]
where the differential $d_i$ is right multiplication by a $m_{i+1} \times m_i$ matrix $P_{i}$ of operators  in $\sD_U[\bs]$ . 
Then, we apply the functor $\bf^\bs \cdot$, and get a resolution
 \[ \ldots \ra \bf^\bs \cdot \sD_U[\bs]^{m_1} \xto{ 1\otimes d_1}  \bf^\bs \cdot  \sD_U[\bs]^{m_1} \xto{  1\otimes d_0 }   \bf^\bs \sD_U[\bs]^{m_0}  \to \bf^\bs \cdot  \cM.\]

For any differential operator $P(s) \in \sD_U[\bs]$, we let $\bf^\bs \cdot P(s) \cdot \bf^{-\bs}$ denote the conjugation by invertible function $\bf^\bs$. Then we have the following isomorphism
 \[\begin{tikzcd}
 \bf^\bs \cdot \sD_U[\bs] \arrow[rr, "(-) \cdot P(s)"] \arrow[d, "\psi"] && \bf^\bs \cdot \sD_U[\bs] \arrow[d, "\psi" ] \\
 \sD_U[\bs] \arrow[rr, "(-) \cdot \bf^\bs \cdot P(s) \cdot \bf^{-\bs}" ] & & \sD_U[\bs]
 \end{tikzcd}
 \]
where $\psi$ is an isomorphism of left $\sD_U[\bs]$-modules, and it sends $ \bf^\bs \otimes Q(s)$ to $ \bf^\bs Q(s) \bf^{-\bs}$. 

Applying the isomorphism $\psi$  to the resolution, we have 
\[ \ldots \ra \sD_U[\bs]^{m_1} \xto{ (-) \cdot \bf^\bs \cdot P_0(s) \cdot \bf^{-\bs} }   \sD_U[\bs]^{m_0}   \to \bf^\bs  \cdot\cM.\]

Then we can take the dual, and get a quasi-isomorphism
\[ \D(\bf^\bs \cdot \cM) \simeq [0 \to \sD_U[\bs]^{m_0} \xto{(-) [\bf^\bs \cdot P_0(s) \cdot \bf^{-\bs}]^*}  \sD_U[\bs]^{m_1}  \to \ldots]  \]
where $[-]^*$ is the formal adjoint in the coordinate chart $U$, and $0$ is sitting at degree $-n$. 

Now, we consider $\bf^{-\bs} \cdot \D(  \cM)$, and we get 
\[ \bf^{-\bs} \cdot \D(  \cM) \simeq [0 \to \sD_U[\bs]^{m_0} \xto{(-) \cdot \bf^{-\bs} P_0(s)^* \bf^{\bs} }  \sD_U[\bs]^{m_1}  \to \ldots. ]\]
It remains to observe that 
\[ [ \bf^\bs \cdot P(s) \cdot \bf^{-\bs} ]^* = \bf^{-\bs} \cdot P(s)^* \cdot  \bf^\bs \]
to see that the two chain complexes are identical. This proves the claim. 
\end{proof}


Since the duality functor commutes with localization, and the substitution $-\bs\mapsto \bs$ takes $\mathbf 0$ to $\mathbf 0$, we further have 
\[\bD(\cN|_U\otimes \bC[\bs]_m)\simeq \cM_{-\bla}[\bs]\bf^{\bs}\otimes \bC[\bs]_m\]
as $\sD_U[\bs]_m$-modules. Then we apply \cite[Theorem 5.3(ii)]{WZ} and conclude that 
$$h_*\bD(g_*(\cN|_U\otimes \bC[\bs]_m))=h_*g_!(\bD(\cN|_U\otimes \bC[\bs]_m))\simeq h_*(\sD_V[\bs]\bf^{\bs+\bk'-\bal})\otimes \bC[\bs]_m$$
and 
$$\bD(j_*(\cN|_U\otimes \bC[\bs]_m))=j_!(\bD(\cN|_U\otimes \bC[\bs]_m))\simeq(\sD_X[\bs]\bf^{\bs+\bk'-\bal})\otimes \bC[\bs]_m$$
for some $k'\gg k_0$. Using the existence of multivariate $b$-functions annihilating the quotient $$\sD_X[\bs]\bf^{\bs+\bk'-\bal}/\sD_X[\bs]\bf^{\bs+\bk'-\bal+\bbe_i},$$ we further know that
\[h_*(\sD_V[\bs]\bf^{\bs+\bk'-\bal})\otimes \bC[\bs]_m=\sD_X[\bs]\bf^{\bs+\bk'-\bal-l\bbe_i}\otimes \bC[\bs]_m\]
 for all $l\gg k'$. Moreover, we can assume
that 
\[\bk'-l\bbe_i\in \bK_{-\bal},\]
by Lemma \ref{lm:vanishN}. That is, $h_*\bD(g_*(\cN|_U\otimes \bC[\bs]_m))$ is $n$-Cohen Macaulay.

We hence conclude by taking the $\sD_X[\bs]$-dual that the complex $h_!g_*(\cN|_U\otimes \bC[\bs]_m)$ is a $n$-Cohen Macaulay module.  In particular, it is also $n$-pure over $\sD_X[\bs]_m$, see for example \cite[3.3]{BVWZ}. We then can define 
\[h_{!*}g_*(\cN|_U\otimes \bC[\bs]_m)\]
to be the image of the natural morphism 
\[h_{!}g_*(\cN|_U\otimes \bC[\bs]_m)\to h_{*}g_*(\cN|_U\otimes \bC[\bs]_m).\]
Then by duality, $h_{!*}g(\cN|_U\otimes \bC[\bs]_m)$ is the minimal extension of $g_*(\cN|_U\otimes \bC[\bs]_m)$.


\subsection{} We next prove that the natural morphism 
\[\eta\colon h_!g_*(\cN|_U\otimes \bC[\bs]_m)\ra h_!g_*(\cN|_U\otimes \bC(\bs))\]
is injective. 
 It is enough to prove that for every $b\in m$, the morphism
 \[h_!g_*(\cN|_U\otimes \bC[\bs]_m)\xrightarrow{\cdot b} h_!g_*(\cN|_U\otimes \bC[\bs]_m)\]
is injective. But, as in the proof of \cite[Lemma 3.4.2]{BVWZ},  the kernel of this morphism for every $b\in m$ must be $0$, because of purity. 
Hence $\eta$ is injective. 

\subsection{}
Now we look at the commutative diagram 
\[
\begin{tikzcd}
 h_!g_*(\cN|_U\otimes \bC[\bs]_m)\arrow[r,"\eta"]\arrow[d] & h_!g_*(\cN|_U\otimes \bC(\bs))\arrow[d]\\
 h_*g_*(\cN|_U\otimes \bC[\bs]_m) \arrow[r] & j_*(\cN|_U\otimes \bC(\bs)).
\end{tikzcd}
\]
The second horizontal morphism is injective by definition and the second vertical morphism is identity as both modules are equal to $\cN_{k,l,\bal}\otimes \bC(\bs)$ for every $k$ and $l$. Since $\eta$ is also injective, the natural morphism 
\[ h_!g_*(\cN|_U\otimes \bC[\bs]_m)\ra h_*g_*(\cN|_U\otimes \bC[\bs]_m)\]
is also injective and hence 
\[h_{!*}g(\cN|_U\otimes \bC[\bs]_m)=h_{!}g(\cN|_U\otimes \bC[\bs]_m)\] 
Since 
$$h_*g_*(\cN|_U\otimes \bC[\bs]_m)=j_*(\cN|_U\otimes \bC[\bs]_m)=\cN_{k,0,\bal}\otimes \bC[\bs]_m$$
for $k>k_0$, by minimality 
\[h_{!*}g(\cN|_U\otimes \bC[\bs]_m)=h_{!}g(\cN|_U\otimes \bC[\bs]_m)=\cN_{k,l,\bal}\otimes \bC[\bs]_m\] 
for all $l\gg k> k_0$.


Since $\bC_\mathbf{0}\simeq \bC[\bs]/m$ is supported at $\mathbf{0}$ in $\bC^r$, we have 
\[\cN_{k,l,\bal}\otimes^L_{\bC[\bs]} \bC_\mathbf{0}\simeq \cN_{k,l,\bal}\otimes \bC[\bs]_m\otimes^L_{\bC[\bs]_m} \bC_\mathbf{0}\simeq h_!g_*(\cN_{k,l,\bal}\otimes \bC[\bs]_m|_U)\otimes^L_{\bC[\bs]_m} \bC_\mathbf{0}\]
for $l\gg k\gg 0$. One can easily check $\D$ and $\bullet\otimes^L_{\bC[\bs]_m} \bC_\mathbf{0}$ commute, and hence the required statement follows by substitution.
$\hfill\Box$

\end{document}